\documentclass[11pt]{amsart}

\usepackage{mathpazo}
\usepackage{amsmath,amssymb}
\usepackage{graphicx}
\usepackage{import}
\usepackage{amscd}
\usepackage{wrapfig}
\usepackage{fullpage}
\usepackage{epsfig}
\addtolength{\footskip}{1cm}
\numberwithin{equation}{section}

\newtheorem{theorem}{Theorem}[section]

\newtheorem{lemma}[theorem]{Lemma}
\newtheorem{corollary}[theorem]{Corollary}
\newtheorem{proposition}[theorem]{Proposition}

\newtheorem{remark}[theorem]{Remark}

\newcommand{\D}{{\mathbb D}}
\newcommand{\C}{{\mathbb C}}

\newcommand{\N}{{\mathbb N}}

\newcommand{\R}{{\mathbb R}}
\newcommand{\Z}{{\mathbb Z}}
\newcommand{\T}{{\mathcal T}}
\renewcommand{\H}{{\mathbb H}}

\newcommand{\cC}{\mathcal{C}}

\newcommand{\teichmuller}{Teichm{\"u}ller{ }}

\makeatletter
 \let\c@theorem=\c@subsection
 \let\c@conjecture=\c@subsection
 \let\c@lemma=\c@subsection
 \let\c@proposition=\c@subsection
 \let\c@claim=\c@subsection
 \let\c@question=\c@subsection
 \let\c@criterion=\c@subsection
 \let\c@vfconj=\c@subsection
 \let\c@definition=\c@subsection
 \let\c@notation=\c@subsection
 \let\c@remark=\c@subsection
 \let\c@example=\c@subsection
 \let\c@equation=\c@subsection
 \let\c@figure=\c@subsection
 \let\c@wrapfigure=\c@subsection

\makeatother

\begin{document}

\title{Partial sums of excursions along random geodesics and volume asymptotics for thin parts of moduli spaces of quadratic differentials.}
\author{Vaibhav Gadre}
\address{Mathematics Institute, Zeeman Building, University of Warwick, Coventry, CV47AL.}
\email{gadre.vaibhav@gmail.com}
 
\keywords{\teichmuller theory, Moduli of Riemann surfaces.}
\subjclass[2010]{30F60, 32G15}
 
\maketitle

\begin{abstract}
For a non-uniform lattice in $SL(2, \R)$, we consider excursions in cusp neighborhoods of a random geodesic on the corresponding finite area hyperbolic surface or orbifold. We prove a strong law for a certain partial sum involving these excursions. This generalizes a theorem of Diamond and Vaaler for continued fractions \cite{Dia-Vaa}. In the \teichmuller setting, we consider invariant measures for the $SL(2, \R)$ action on the moduli spaces of quadratic differentials. By the work of Eskin and Mirzakhani \cite{Esk-Mir}, these measures are supported on affine invariant submanifolds of a stratum of quadratic differentials. For a \teichmuller geodesic random with respect to a $SL(2,R)$-invariant measure, we study its excursions in thin parts of the associated affine invariant submanifold.  Under a regularity hypothesis for the invariant measure, we prove similar strong laws for certain partial sums involving these excursions. The limits in these laws are related to the volume asymptotic of the thin parts. By Siegel-Veech theory, these are given by various Siegel-Veech constants. As a direct consequence, we show that the word metric grows faster than $T \log T$ along  \teichmuller geodesics random with respect to the Masur-Veech measure.
\end{abstract}

\pagestyle{empty}

\section{Introduction}
The aim of this paper is to provide a specific analogy between non-uniform lattices in $SL(2, \R)$ and mapping class groups. This analogy is established from the point of view of cusp excursions of random geodesics on the quotient hyperbolic surface on one hand and cusp excursions of random \teichmuller geodesics in a $SL(2,\R)$ orbit closure in a stratum of the moduli space of quadratic differentials on the other.

Let $G$ be a non-uniform lattice in $SL(2, \R)$, i.e. the quotient $X = G \backslash \H^2$ is a complete finite area surface/orbifold with finitely many cusps $c_1, \dots, c_J$. Let $X_{\text{cusps}}$ denote the union of disjoint horoball neighborhoods of the cusps. The lift of $X_{\text{cusps}}$ to $\H^2$ is a countable collection $\mathcal{H}$ of disjoint horoballs. The complement $X \setminus X_{\text{cusps}}$ is a compact set $X_{\text{thick}}$ called the thick part of $X$. The complement of the horoballs in $\mathcal{H}$ is the lift $\widetilde{X}_{\text{thick}}$ which we call the thick part in $\H^2$.

The unit tangent bundle $T^1 \H^2$ can be naturally identified with $SL(2, \R)$. It carries a natural $SL(2, \R)$-invariant measure which is simply the Haar measure. In the upper half-plane model the measure is given by 
\[
d \ell= \frac{dx\ dy\ d \theta}{2\pi y^2}.
\]
The geodesic flow is given by the action of the diagonal subgroup. So $d\ell$ descends to a flow invariant measure on $T^1 X$ and is called \emph{Liouville measure}. 
The conditional measure on the unit circle in the tangent plane at any point is the pullback via the visual map of the standard Lebesgue measure on
$\partial \H^2 = S^1$.

By ergodicity of the geodesic flow $d\ell$-almost every geodesic ray is recurrent to $X_{\text{thick}}$. Also by ergodicity and the fellow traveling property of hyperbolic geodesics, Lebesgue almost every geodesic ray from any base-point $x_0 \in X$ ventures into $X_{\text{cusps}}$ infinitely often. In particular, a geodesic ray $\gamma$ in $\mathbb{H}^2$ whose endpoint $r$ in $S^1$ is typical with respect to the Lebesgue measure enters and leaves infinitely many horoballs. By analyzing the collection $\mathcal{H}$, Sullivan \cite{Sul} showed that the $\limsup$ of maximum depth in the cusp neighborhoods that the geodesic ray achieves is asymptotically $(1/2) \log T$, where $T$ is the time along the geodesic. It is convenient to assume that $x_0 \in X_{\text{thick}}$ which can  be achieved by making the cusp neighborhoods smaller if necessary. 

To set up notation, let $\gamma(x_0, r)$ be the geodesic ray from $x_0$ to $r \in S^1$. We denote by $\gamma_T(x_0, r)$ the point on it distance $T$ from $x_0$. When the context is clear we will use just $\gamma$ and $\gamma_T$. Let $\pi: \H^2 \to \widetilde{X}_{\text{thick}}$ be the closest point projection. Let $N = N(T)$ be the number of horoballs that $\gamma$ intersects up to $\gamma_T$. We enumerate this collection of horoballs $\mathcal{H}(\gamma, T)= \{H_1, H_2, \dots, H_N\}$ in the order of increasing time. For all $k< N$, $\gamma$ enters and exits $H_k$; $H_N$ may be an exception if $\gamma_T \in H_N$. Let $d_{\text{thick}}$ be the path metric on $\widetilde{X}_{\text{thick}}$. For a horoball $H$ that $\gamma$ enters and exits, the {\em complete excursion} $E(\gamma, H)$ is defined as the $d_{\text{thick}}$-distance between the entry and exit points. If $\gamma_T \in H_N$ then the {\em partial excursion} $E(\gamma, H_N)$ is the $d_{\text{thick}}$-distance between the entry point for $H_N$ and $\pi(\gamma_T)$. 

The {\em total excursion} till time $T$ first defined in \cite{Gad-Mah-Tio} is given by
\[
E(\gamma, T) = \sum\limits_{k \leqslant N} E(\gamma, H_k). 
\]
It was shown using \cite[Proposition 5.4]{Gad-Mah-Tio} that along Leb-typical geodesic rays $E(\gamma, T)/T \to \infty$. We prove here:
\begin{theorem}\label{Esum}
For Lebesgue almost every $r$ in $S^1$, 
\[
\lim_{T \to \infty} \frac{E(\gamma,T) - \max\limits_{k \leqslant N} E(\gamma, H_k)}{T \log T} = \left(\frac{2}{\pi}\right)\frac{\ell(T^1 X_{\text{cusp}})}{\ell(T^1 X)}.
\]
\end{theorem}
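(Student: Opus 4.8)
The plan is to reduce this to a statement about partial sums of excursions in the hyperbolic setting, which should connect to Diamond–Vaaler's theorem on continued fractions. Let me think about what the natural approach is.

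The key insight from the paper's abstract is that this generalizes Diamond-Vaaler for continued fractions. For the modular surface $SL(2,\mathbb{Z})\backslash \mathbb{H}$, the continued fraction expansion of a real number $r$ corresponds to the cusp excursions of the geodesic ray to $r$. Diamond-Vaaler showed that $\sum_{k\le n} a_k - \max_{k\le n} a_k \sim \frac{n\log n}{\log 2}$ almost surely, where $a_k$ are continued fraction digits. The excursions $E(\gamma, H_k)$ should be roughly $2\log a_k$ (up to bounded error), with the number of horoballs $N(T)$ related to $n$ and $T$.

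So the approach:

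1. **Relate excursions to a renewal-type process.** Each time the geodesic enters a horoball, the depth of penetration determines the excursion length $E(\gamma, H_k) \approx 2 \log(\text{penetration depth parameter})$, and also determines how much time $\gamma$ spends; the time to traverse horoball $H_k$ is roughly $2\log(\text{depth})= E(\gamma, H_k)$ up to bounded error plus the time spent in the thick part between horoballs.

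2. **Identify the tail distribution of a single excursion.** The crucial probabilistic input is: for the geodesic flow with Liouville measure (equivalently, Lebesgue on $S^1$), the excursion depth into a cusp, conditioned on entering, has a heavy tail. Specifically, $\text{Leb}(E(\gamma, H) > s) \asymp C e^{-s/2}$ for the appropriate constant, because the measure of horoballs of "size" corresponding to excursion $\ge s$ decays like $e^{-s}$ in diameter but the set of directions hitting it... Actually I need $\text{Prob}(\text{excursion} \ge s \mid \text{enter a horoball})$. Since $E \approx 2\log a$ and $\text{Prob}(a_k = a) \approx 1/a^2$ (Gauss measure for continued fractions), we get $\text{Prob}(E \ge s) \approx \text{Prob}(a \ge e^{s/2}) \approx e^{-s/2}$. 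This is a distribution with $\int e^{s/2}\,d\mu$-type behavior — the mean of $e^{E/2}$... hmm, let me reconsider. Actually $E$ itself has exponential tail with rate $1/2$, so $E$ has finite mean. But the relevant sum grows like $T\log T$, faster than $T$ times the mean of $E$ — wait, that doesn't match.

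Let me reconsider. Number of horoballs up to time $T$: since each excursion of depth $d$ (i.e., $E \approx 2\log d$... no wait). Let me recompute: a horoball of Euclidean diameter $\delta$, the geodesic penetrates to depth $\sim \log(1/\delta) + \log(\text{something})$, hmm. The standard fact (Sullivan) is max depth $\sim \frac12\log T$. The time spent traversing a horoball of penetration depth $D$ is $\sim 2D$. So $\sum$ (time in horoballs) + (time in thick part) $= T$. Time in thick part between consecutive horoballs has finite mean, so $N(T) \asymp T/(\text{const})$ roughly — no, we need $\sum_{k\le N} 2 D_k + N\cdot(\text{avg thick time}) = T$. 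If $D_k = E_k/2$ have finite mean, then $\sum 2D_k \asymp N$, so $N \asymp T$. Then $E(\gamma,T) = \sum E_k = \sum 2D_k \asymp T$?? That contradicts the paper's claim that $E/T \to \infty$.

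So $E_k$ must NOT have finite mean — i.e., the tail is heavier. Reconsider: $\text{Prob}(E_k \ge s)$. If $E_k = 2\log a_k$ with $\text{Prob}(a_k \ge A) \sim 1/A$, then $\text{Prob}(E_k \ge s) = \text{Prob}(a_k \ge e^{s/2}) \sim e^{-s/2}$, giving $\mathbb{E}[E_k] < \infty$. Hmm. But wait — for continued fractions, the time parameter: $T \sim 2\sum \log q_k$ where $q_k$ are continued fraction denominators, and $\log q_n \sim \sum \log a_k \cdot(\ldots)$, with $\log q_n /n \to \pi^2/(12\log 2)$. So $n \asymp T$. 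And $\sum_{k\le n} a_k$ — that's the sum of the $a_k$ themselves, which has infinite mean ($\text{Prob}(a_k \ge A) \sim 1/A$). Diamond-Vaaler: $\sum a_k - \max a_k \sim n\log n/\log 2 \asymp T\log T$.

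But the excursion $E(\gamma, H_k)$ — is it $\sim 2\log a_k$ or $\sim a_k$? The $d_{\text{thick}}$-distance between entry and exit points on the horosphere: if the horoball is $\{y > 1\}$ mod $x\mapsto x+1$, entry at $x_0$ exit at $x_1$ with $|x_1 - x_0| \sim 1/a$... no. Hmm, for continued fractions the relevant "excursion" in the word metric / this $d_{\text{thick}}$ metric: the horocyclic distance traveled is $\asymp a_k$ (you wrap around the cusp $\sim a_k$ times). So $E(\gamma, H_k) \asymp a_k$. That's consistent! $E_k$ has infinite mean, tail $\text{Prob}(E_k \ge s) \asymp 1/s$.

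**Revised plan.**

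1. Establish that the excursion $E(\gamma, H_k)$ (the $d_{\text{thick}}$-distance across the horoball) has tail $\text{Leb}(E(\gamma,H_k)\ge s \mid \text{hit }H_k) \sim c/s$ with $c$ computable, and that $N(T) \sim \kappa T$ for an explicit $\kappa$ (relating to $\ell(T^1 X_{\text{cusp}})/\ell(T^1 X)$ — proportion of time in cusps vs total, via ergodic theorem, but need to be careful since the excursion function isn't integrable). Actually $N(T)/T \to$ (rate of entering horoballs) which by Birkhoff / cross-section arguments equals $\ell$(boundary horospheres)-type quantity. The constant $\frac{2}{\pi}\frac{\ell(T^1 X_{\text{cusp}})}{\ell(T^1 X)}$ will emerge from combining these.

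2. Apply a Diamond–Vaaler type strong law for partial sums of a stationary (or mixing) sequence of random variables with tail $\sim c/s$: for such, $\frac{1}{n\log n}(S_n - \max_{k\le n}) \to c$ a.s. This is the heart of the matter; the paper presumably proves (or cites) a suitable version. The independence is not exact — excursions along a geodesic are only weakly dependent (exponential mixing of geodesic flow / Gauss map) — so one needs a strong law robust to this, via a martingale or a direct truncation-and-variance argument using mixing.

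3. **Convert from $n$ (number of horoballs) to $T$.** Use $N(T)\sim \kappa T$, and handle the boundary term (partial excursion $E(\gamma, H_N)$, and the discrepancy between "time $T$" and "after $N$ complete excursions") — these are lower-order, absorbed because we've subtracted the max and $T\log T$ dominates.

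**Main obstacle.** The hardest part is step 2 combined with the lack of independence: proving the strong law $\frac{S_n - \max_n}{n\log n}\to c$ for the actual excursion sequence, which is a stationary sequence under the (natural extension of the) cross-section map rather than an i.i.d. sequence. One must quantify the mixing — using exponential mixing of the geodesic flow or of the associated section return map — well enough to run a Borel–Cantelli / truncation argument à la Diamond–Vaaler. A secondary difficulty is computing the precise tail constant $c$ and the counting rate $\kappa$ so that their product matches $\frac{2}{\pi}\frac{\ell(T^1X_{\text{cusp}})}{\ell(T^1X)}$; this requires an explicit horoball-counting / Siegel-transform computation in each cusp, combining the Euclidean geometry of the horoball packing $\mathcal{H}$ with the normalization of Liouville measure. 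I'll also need the almost-sure asymptotic $N(T)\sim\kappa T$ to hold with a good enough error term (e.g. $N(T) = \kappa T + o(T/\log T)$ fails in general, but the subtraction of the maximum and the robustness of the $n\log n$ normalization under $n \mapsto n(1+o(1))$ saves us, since $n\log n$ is slowly varying in a suitable sense — this needs a short lemma).
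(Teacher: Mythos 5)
Your outline heads down a genuinely different route from the paper: you propose to discretize along the cross-section of horoball entries, treat the excursions as a stationary, weakly dependent sequence with tail $\sim c/s$, prove a trimmed strong law $\bigl(S_n-\max_{k\leqslant n}\bigr)/(n\log n)\to c$ for that sequence, and then convert $n$ to $T$ via $N(T)\sim\kappa T$. The paper instead stays in continuous time: it encodes the excursions by the function $\Psi(x)=(2/\pi)e^{d(x,\partial X_{\text{thick}})}$ on $T^1X$, truncates it at a $T$-dependent level $R=2^{n(T)}$ so that $\Vert\Psi_R\Vert_{L^1}=(2C_X/\pi)\log R$, and combines an effective ergodic theorem for the sequence of truncations (Theorem \ref{ergodic-rate}, proved from the Moore--Ratner decay of correlations via a Chebyshev/Borel--Cantelli argument along dyadic times) with the statement that for large $T$ at most one excursion exceeds $T(\log T)^c$ (Proposition \ref{two-large}, proved from quasi-independence, Lemma \ref{quasi-ind}) and with $N\sim\eta T$ (Lemma \ref{n-T-lemma}).

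The problem is that your plan leaves precisely the technical core unproven. Step 2 --- a trimmed strong law for a stationary sequence with tail $c/s$ that is only exponentially mixing, not i.i.d. --- is not a citable off-the-shelf result in the form you need; Diamond--Vaaler's proof uses the specific Gauss-map/independence structure of continued fraction digits, and adapting it is exactly where the quantitative work lies (an $L^2$ bound for truncated partial sums and quasi-independence of two simultaneous large excursions, the analogues of Lemma \ref{L2} and Lemma \ref{quasi-ind}). Saying the paper ``presumably proves or cites a suitable version'' is naming the gap, not filling it. Secondly, the constant is never identified: you do not compute the conditional tail constant $c$ of a single excursion, nor the entry rate $\kappa$, nor verify that their product equals $(2/\pi)\,\ell(T^1X_{\text{cusp}})/\ell(T^1X)$; in the paper this constant falls out of the elementary volume computation $\ell(T^1X_R)=C_X/R$ together with the comparison \ref{complete-exc} between an excursion and the time integral of $\Psi$, and without some such computation your argument cannot produce the stated right-hand side. (A further, smaller omission: the theorem is stated for Lebesgue-a.e.\ direction from a fixed basepoint, and passing from the flow-stationary statement to this requires the basepoint-change estimate of Corollary \ref{generic-S1}, which your sketch does not address.) So while the cross-section strategy is plausible and would, if executed, mirror the paper's ingredients in discrete time, as written it is a roadmap whose two hardest steps are asserted rather than proved.
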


\subsection{Continued fractions}
Let $r \in [0,1]$. The classical continued fraction expansion of $r$ is given by
\[
r = \cfrac{1}{a_1+ \cfrac{1}{a_2+ \cfrac{1}{+\dotsb }}} 
\]
where each $a_i$ is a positive integer. When $r$ is irrational the expansion is infinite. We denote the expansion as $[a_1, a_2, \dots]$.

\begin{theorem}[Diamond-Vaaler \cite{Dia-Vaa}]\label{Diam-Vaal}
For Leb-almost every $r \in [0,1]$
\[
\lim_{n \to \infty} \frac{\sum\limits_{i=1}^n a_i - \max\limits_{i\leqslant n} a_i}{n\log n} = \frac{1}{\log 2}.
\]
\end{theorem}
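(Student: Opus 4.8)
The plan is to obtain Theorem~\ref{Diam-Vaal} as the special case of Theorem~\ref{Esum} for the modular orbifold $X = PSL(2,\Z)\backslash\H^2$, which has a single cusp, translated through the classical dictionary between the Gauss map/continued fraction algorithm and the geodesic flow on $X$ (Artin, Series, and others). For Lebesgue-a.e.\ $r = [a_1, a_2, \dots]$, the geodesic ray $\gamma(x_0, r)$ makes its $k$-th horoball crossing at the $k$-th step of the renormalization cocycle, with the entry/exit geometry governed by $G^{k-1}(r) = [a_k, a_{k+1}, \dots] \in (1/(a_k + 1), 1/a_k)$. Two quantitative inputs carry the argument. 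First, the $d_{\mathrm{thick}}$-distance between the entry and exit feet of a chord reaching hyperbolic depth $\delta$ into a horoball is comparable to $e^\delta$, and the $k$-th excursion has depth $\log a_k + O(1)$ (since $1/|G^{k-1}(r)| = a_k + O(1)$), so $E(\gamma, H_k) = \kappa\, a_k + O(1)$ with a uniform error, for a fixed $\kappa > 0$ depending only on the chosen horoball neighborhood. Second, the time spent in the $k$-th excursion cycle is $2\log a_k + O(1)$, so by Birkhoff's theorem applied to this (integrable) return-time function the arrival time $T_n$ at the $n$-th crossing satisfies $T_n/n \to \tau_0$ a.e.\ for a finite $\tau_0 > 0$, whence $T_n\log T_n = \tau_0\, n\log n\,(1 + o(1))$.

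Evaluating Theorem~\ref{Esum} along $T = T_n$, the numerator is $E(\gamma, T_n) - \max_{k\leqslant n}E(\gamma, H_k) = \kappa\bigl(\sum_{k\leqslant n}a_k - \max_{k\leqslant n}a_k\bigr) + O(n)$: the sum of the per-term errors is $O(n)$, and the uniform bound gives $\max_{k\leqslant n}E(\gamma, H_k) = \kappa\max_{k\leqslant n}a_k + O(1)$. Dividing by $T_n\log T_n$ and applying Theorem~\ref{Esum},
\[
\lim_{n\to\infty}\frac{\sum_{k\leqslant n}a_k - \max_{k\leqslant n}a_k}{n\log n} \;=\; \frac{\tau_0}{\kappa}\cdot\frac{2}{\pi}\cdot\frac{\ell(T^1X_{\mathrm{cusp}})}{\ell(T^1X)}
\]
for Lebesgue-a.e.\ $r$ — and ``a.e.'' transfers freely between the two sides since the base measure of the coding (the Gauss measure $\tfrac{1}{\log 2}\tfrac{dr}{1+r}$) is equivalent to Lebesgue on $[0,1]$. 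One then checks, by a direct computation with the explicit modular data ($\operatorname{area}(X) = \pi/3$, the normalization of the cusp horoball, which enters both $\kappa$ and $\ell(T^1X_{\mathrm{cusp}})$, and $\tau_0$), that the right-hand side equals $1/\log 2$; the dependence on the chosen neighborhood cancels, leaving a fixed numerical identity.

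The main obstacle is the bookkeeping that turns Theorem~\ref{Esum} into this clean limit: establishing the dictionary with a genuinely uniform error $E(\gamma, H_k) = \kappa a_k + O(1)$ (including for shallow excursions, and reconciling the count $N(T)$ of crossed horoballs with the number $n$ of partial quotients); interpolating from the crossing times $T_n$ to all $T$, which is routine since one excursion changes $E(\gamma, T)$ and $T$ by $O(a_n + \log a_n)$ and $a_n = o(n\log n)$ a.e.\ by a Borel--Cantelli estimate along $n = 2^m$; and the numerical identification of $\tfrac{\tau_0}{\kappa}\cdot\tfrac{2}{\pi}\cdot\ell(T^1X_{\mathrm{cusp}})/\ell(T^1X)$ with $1/\log 2$, the one genuinely computational point, where all normalization conventions must be held mutually consistent. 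Alternatively, Theorem~\ref{Diam-Vaal} can be proved directly in the spirit of Diamond--Vaaler: under the Gauss measure the digits are stationary and $\psi$-mixing with tail $\mu(a_1\geqslant K) = \tfrac{1}{\log 2}\log(1 + 1/K)$; splitting $\sum_{k\leqslant n}a_k = \sum_k a_k\mathbf{1}[a_k\leqslant M_n] + \sum_k a_k\mathbf{1}[a_k > M_n]$ with $M_n = n/(\log n)^2$, a dyadic-blocking and variance argument shows the first sum is $\tfrac{n\log n}{\log 2}(1 + o(1))$ a.e., while layered Borel--Cantelli bounds along $n = 2^m$ (splitting the large digits further at level $n$, counting those exceeding $n$, and bounding the second largest) show the second sum minus its largest term is $o(n\log n)$ a.e. In both routes the crux is controlling the ``moderately large but non-maximal'' partial quotients — exactly where the infinite-mean, index-one nature of the digit distribution forces the single-term trimming.
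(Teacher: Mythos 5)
Your primary route is essentially the paper's own proof: specialize Theorem \ref{Esum} to the modular surface with the Ford horoball packing, translate excursions into partial quotients with a uniform additive error (so the total discrepancy is $O(n)$), convert geodesic time to the digit count via $T_n \sim \tau_0 n$, and identify the resulting constant as $1/\log 2$. The only difference is presentational: where you defer the constants to ``a direct computation'' and get $T_n/n \to \tau_0$ from Birkhoff on the return time, the paper evaluates them explicitly --- the Ford normalization makes the right side of Theorem \ref{Esum} equal to $6/\pi^2$ with $\kappa = 1$, and L\'evy's theorem $\log q_n/n \to \pi^2/(12\log 2)$ together with the closest-lattice-point geometry gives $\tau_0 = \pi^2/(6\log 2)$, so the product is $1/\log 2$ exactly as you predict.
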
 

We will derive Theorem \ref{Diam-Vaal} from Theorem \ref{Esum} as a special case when $X$ is the modular surface $SL(2, \Z) \backslash \H^2$. Excursions of geodesic rays into the cusp of $X$ are related to coefficients in the continued fraction expansion of the point at infinity for the geodesic. Diamond-Vaaler used techniques specific to the symbolic dynamics (Gauss map) in the theory of continued fractions. Theorem \ref{Esum} relies on more general features viz. asymptotic for $\text{vol}(X_{\text{cusps}})$ and exponential mixing of the geodesic flow. These features are also true for the \teichmuller geodesic flow in the setting of quadratic differentials. 

\subsection{Word metric along random geodesics}
We state some direct implications of Theorem \ref{Esum}. 

For distinct points $x, y \in \H^2$ let $\gamma(x, y)$ be the hyperbolic geodesic segment between them. The {\em projected path} $p(x,y)$ is defined by $p(x,y) = \pi(\gamma(x,y))$. Let $L(x, y)$ be the $d_{\text{thick}}$-length of $p(x,y)$. The quantify $L(x_0,\gamma_T) - E(\gamma, T)$ is the time spent by $\gamma$ in interior of $X_{\text{thick}}$. By ergodicity of geodesic flow, this grows linearly in $T$ and hence it follows:

\begin{theorem}\label{projpath}
For Leb-almost every $r \in S^1$
\[
\lim_{T \to \infty} \frac{L(x_0, \gamma_T) - \max\limits_{k \leqslant N} E(\gamma, H_k)}{T \log T} = \left(\frac{2}{\pi}\right)\frac{\ell(T^1 X_{\text{cusp}})}{\ell(T^1 X)}.
\]
\end{theorem}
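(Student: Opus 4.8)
The plan is to compare $L(x_0,\gamma_T)$ directly with the total excursion $E(\gamma,T)$ and then feed the comparison into Theorem \ref{Esum}. The key is the identity
\[
L(x_0,\gamma_T) = E(\gamma,T) + T_{\text{thick}}(\gamma,T),
\]
where $T_{\text{thick}}(\gamma,T)$ denotes the arc length of $\gamma|_{[0,T]}$ spent in the interior of $\widetilde{X}_{\text{thick}}$. To see this I would decompose $\gamma|_{[0,T]}$ into its finitely many maximal sub-arcs lying in $\widetilde{X}_{\text{thick}}$ and the sub-arcs lying in the horoballs $H_1,\dots,H_N$, and decompose the projected path $p(x_0,\gamma_T) = \pi(\gamma(x_0,\gamma_T))$ correspondingly. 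On a thick sub-arc, $\pi$ is the identity, so the corresponding piece of $p(x_0,\gamma_T)$ is a unit-speed geodesic arc whose $d_{\text{thick}}$-length equals its time duration; summing these gives $T_{\text{thick}}(\gamma,T)$. On the sub-arc inside $H_k$, which $\gamma$ enters at $a_k$ and exits at $b_k$ (with $b_N$ replaced by $\pi(\gamma_T)$ when $\gamma_T \in H_N$), the projection $\pi$ carries it to an arc on $\partial H_k$, and this horocyclic arc realizes $d_{\text{thick}}(a_k,b_k)$, hence contributes exactly $E(\gamma,H_k)$. Adding the two contributions over all pieces yields the displayed identity.

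Next I would bound $T_{\text{thick}}(\gamma,T)$. Writing $\chi$ for the indicator function of $X_{\text{thick}}$ on $T^1X$ and letting $v$ be the unit tangent vector at $x_0$ pointing towards $r$, we have $T_{\text{thick}}(\gamma,T) = \int_0^T \chi(\phi_t v)\,dt$, where $\phi_t$ is the geodesic flow. Since $\phi_t$ is ergodic for the normalized Liouville measure, Birkhoff's ergodic theorem gives $\frac1T\int_0^T \chi(\phi_t v)\,dt \to \ell(T^1X_{\text{thick}})/\ell(T^1X)$ for Liouville-almost every $v$. Transferring this convergence to Lebesgue-almost every direction $r$ in the fibre over the fixed point $x_0$ is the only point requiring a genuine argument: it follows from the absolute continuity of the (un)stable horocycle foliation together with the Hopf argument, and is in any case subsumed by the same full-measure hypotheses already in force for Theorem \ref{Esum}. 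In particular, along Lebesgue-almost every $\gamma$ we get $0 \leq T_{\text{thick}}(\gamma,T) \leq T$, so $T_{\text{thick}}(\gamma,T) = O(T)$.

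Finally I would combine the two. For Lebesgue-almost every $r$,
\[
\frac{L(x_0,\gamma_T) - \max\limits_{k\leqslant N} E(\gamma,H_k)}{T\log T} = \frac{E(\gamma,T) - \max\limits_{k\leqslant N} E(\gamma,H_k)}{T\log T} + \frac{T_{\text{thick}}(\gamma,T)}{T\log T}.
\]
The first term on the right converges to $(2/\pi)\,\ell(T^1X_{\text{cusp}})/\ell(T^1X)$ by Theorem \ref{Esum}, and the second converges to $0$ because $T_{\text{thick}}(\gamma,T) = O(T)$. This gives the claimed limit.

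I expect the path-decomposition step and the final limit to be routine; the only delicate points are the passage from Liouville-a.e. convergence to Lebesgue-a.e. convergence on a single fibre over $x_0$, and the verification that the projected horocyclic arc inside each horoball has $d_{\text{thick}}$-length exactly equal to $E(\gamma,H_k)$. Even if the latter held only up to bounded additive and multiplicative error, the argument would survive unchanged, since all that is actually used is $L(x_0,\gamma_T) - E(\gamma,T) = O(T)$ along Lebesgue-a.e. $\gamma$.
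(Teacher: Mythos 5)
Your proposal is correct and follows essentially the same route as the paper, which simply observes that $L(x_0,\gamma_T)-E(\gamma,T)$ is the time spent by $\gamma$ in the interior of $X_{\text{thick}}$, grows at most linearly in $T$, and hence vanishes after dividing by $T\log T$, so the limit reduces to Theorem \ref{Esum}. Your extra worry about transferring Liouville-a.e.\ to Lebesgue-a.e.\ convergence is unnecessary here, since the trivial bound $0\leqslant T_{\text{thick}}(\gamma,T)\leqslant T$, which you also note, already suffices for every ray.
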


The projected path $p(x_0, \gamma_T)$ is a quasi-geodesic in $(\widetilde{X}_{\text{thick}}, d_{\text{thick}})$ \cite[Lemma 5.1]{Gad-Mah-Tio}. More precisely, $L(x_0, \gamma_T) - d_{\text{thick}}(x_0, \gamma_T)$ grows at most linearly in $N$. As we show in Lemma \ref{n-T-lemma}, $N$ grows linearly in $T$. Hence, we get
\begin{theorem} \label{thick-metric}
For Leb-almost every $r \in S^1$
\[
\lim_{T \to \infty} \frac{d_{\text{thick}}(x_0, \gamma_T) - \max\limits_{k \leqslant N} E(\gamma, H_k)}{T \log T} = \left(\frac{2}{\pi}\right)\frac{\ell(T^1 X_{\text{cusp}})}{\ell(T^1 X)}.
\]
\end{theorem}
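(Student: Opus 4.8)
The plan is to obtain Theorem \ref{thick-metric} as a consequence of Theorem \ref{projpath}, by replacing the length $L(x_0,\gamma_T)$ of the projected path with the distance $d_{\text{thick}}(x_0,\gamma_T)$ at the cost of an error that is negligible after dividing by $T\log T$. Concretely, I would write, for Lebesgue-a.e. $r \in S^1$,
\[
\frac{d_{\text{thick}}(x_0,\gamma_T) - \max_{k \leqslant N} E(\gamma, H_k)}{T\log T} = \frac{L(x_0,\gamma_T) - \max_{k \leqslant N} E(\gamma, H_k)}{T\log T} - \frac{L(x_0,\gamma_T) - d_{\text{thick}}(x_0,\gamma_T)}{T\log T},
\]
so that the first term on the right already converges to $(2/\pi)\,\ell(T^1 X_{\text{cusp}})/\ell(T^1 X)$ by Theorem \ref{projpath}. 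It then remains to show that the second term on the right tends to $0$.

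For that, recall from \cite[Lemma 5.1]{Gad-Mah-Tio} that $p(x_0,\gamma_T)$ is a quasi-geodesic in $(\widetilde{X}_{\text{thick}}, d_{\text{thick}})$ with constants depending only on $X$. Since $p(x_0,\gamma_T)$ is in particular a path in $\widetilde{X}_{\text{thick}}$ from $x_0$ to $\pi(\gamma_T)$ of length $L(x_0,\gamma_T)$, we have $L(x_0,\gamma_T) \geqslant d_{\text{thick}}(x_0,\gamma_T)$, and moreover the additive defect accumulates at most a bounded amount per horoball excursion, so $L(x_0,\gamma_T) - d_{\text{thick}}(x_0,\gamma_T) \leqslant A\, N + B$ for constants $A = A(X)$ and $B = B(X)$. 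By Lemma \ref{n-T-lemma}, $N = N(T)$ grows linearly in $T$ for Lebesgue-a.e. $r$; in particular $N \leqslant c\,T$ eventually, for some $c = c(X)$. Hence
\[
0 \leqslant \frac{L(x_0,\gamma_T) - d_{\text{thick}}(x_0,\gamma_T)}{T\log T} \leqslant \frac{A c\, T + B}{T\log T} \longrightarrow 0 \quad (T \to \infty),
\]
and the theorem follows.

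There is no genuine obstacle here: Theorem \ref{thick-metric} is essentially a corollary of Theorem \ref{projpath}, the only extra input being the two geometric facts already recorded in the text, namely the linear-in-$N$ bound on the quasi-geodesic defect from \cite[Lemma 5.1]{Gad-Mah-Tio} and the linear-in-$T$ growth of $N$ from Lemma \ref{n-T-lemma}. If anything requires care, it is making sure the defect is controlled by $N$ (the number of excursions) rather than by $L$ or $T$ directly; but this is exactly the form in which the quasi-geodesic estimate is set up, and under the $T\log T$ normalization any such linear term is washed out. Note that the arithmetically delicate quantity $\max_{k \leqslant N} E(\gamma,H_k)$ inherited from Theorem \ref{Esum} is untouched by the passage from $L$ to $d_{\text{thick}}$ and is simply carried along unchanged.
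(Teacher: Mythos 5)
Your proposal is correct and follows essentially the same route as the paper: the paper also deduces Theorem \ref{thick-metric} from Theorem \ref{projpath} by invoking \cite[Lemma 5.1]{Gad-Mah-Tio} to bound $L(x_0,\gamma_T) - d_{\text{thick}}(x_0,\gamma_T)$ linearly in $N$ and Lemma \ref{n-T-lemma} to show $N$ grows linearly in $T$, so the discrepancy is $o(T\log T)$.
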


We say a basepoint $x_0 \in \mathbb{H}^2$ is generic if the stabilizer of $x_0$ in $G$ is trivial. The $G$-orbit of $x_0$ is called a lattice. If $x_0$ is a generic basepoint, then each lattice point corresponds to a unique group element. Assuming a generic basepoint, each point $\gamma_T$ has at least one closest lattice point $h_T x_0$. In fact, this closest point is unique for almost all points along
the geodesic.

The group $G$ is finitely generated. A finite choice of generators for $G$ defines a proper {\em word metric} $d_G$ on $G$. Different choices of generators produce quasi-isometric metrics. Let $d_G(1,h_T)$ be the word length for the lattice point $h_T x_0$ closest to $\gamma_T$. 

$G$ acts cocompactly on $\widetilde{X}_{\text{thick}}$. So by the Svarc-Milnor lemma, $(G,d_G)$ is quasi-isometric to $(\widetilde{X}_{\text{thick}}, d_{\text{thick}})$. Thus, a consequence of Theorem \ref{thick-metric} is the following theorem:

\begin{theorem}\label{wordmetric}
There exists a constant $M_1> 0$ that depends on the word metric such that for Leb-almost every $r\in S^1$
\[
M_1 T \log T  < d_G(1, h_T) 
\]
for $T$ sufficiently large depending on $r$. 
\end{theorem}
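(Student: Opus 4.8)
The plan is to read the bound off from Theorem~\ref{thick-metric} by transporting it through the Svarc--Milnor quasi-isometry $(G,d_G)\simeq(\widetilde{X}_{\text{thick}},d_{\text{thick}})$, $g\mapsto gx_0$, already noted above. First, fix quasi-isometry constants $A\geqslant 1$ and $B\geqslant 0$ so that $d_G(1,h)\geqslant\tfrac1A\,d_{\text{thick}}(x_0,hx_0)-B$ for every $h\in G$; applying this with $h=h_T$, it suffices to show that $d_{\text{thick}}(x_0,h_Tx_0)$ eventually exceeds a fixed positive multiple of $T\log T$ along Leb-a.e.\ ray. Since $h_Tx_0$ and $\pi(\gamma_T)$ both lie in $\widetilde{X}_{\text{thick}}$, the triangle inequality for $d_{\text{thick}}$ gives
\[
d_{\text{thick}}(x_0,h_Tx_0)\;\geqslant\;d_{\text{thick}}(x_0,\gamma_T)\;-\;d_{\text{thick}}\big(\pi(\gamma_T),h_Tx_0\big),
\]
so the argument reduces to bounding the ``gap'' $d_{\text{thick}}(\pi(\gamma_T),h_Tx_0)$ by $o(T\log T)$.

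For the gap, write $\delta\geqslant 0$ for the distance from $\gamma_T$ to $\widetilde{X}_{\text{thick}}$, realized at $\pi(\gamma_T)$ — so $\delta$ is the depth of $\gamma_T$ in the horoball containing it, and $\delta=0$ when $\gamma_T$ already lies in $\widetilde{X}_{\text{thick}}$. Because $G$ acts cocompactly on $\widetilde{X}_{\text{thick}}$, there is $R>0$ such that every point of $\widetilde{X}_{\text{thick}}$ — in particular $\pi(\gamma_T)$ — lies within $d_{\text{thick}}$-distance $R$, hence within $\H^2$-distance $R$, of the orbit $Gx_0$; consequently the $\H^2$-distance from $\gamma_T$ to its nearest orbit point $h_Tx_0$ lies between $\delta$ and $\delta+R$. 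The idea is that such a nearest orbit point must sit essentially directly below $\gamma_T$, close to the boundary of the horoball $H$ containing $\gamma_T$: a short hyperbolic computation shows that a point of $\widetilde{X}_{\text{thick}}$ at $\H^2$-distance at most $\delta+R$ from $\gamma_T$ has horizontal offset along $\partial H$ from $\pi(\gamma_T)$, and depth relative to $\pi(\gamma_T)$, both bounded in terms of $R$ alone, and these translate into $d_{\text{thick}}(\pi(\gamma_T),h_Tx_0)=O(1)$. (If one prefers to bypass this computation, the crude estimate $d_{\text{thick}}(\pi(\gamma_T),h_Tx_0)=O(e^\delta)$ already suffices, since by Sullivan's logarithm law \cite{Sul} one has $\delta\leqslant(\tfrac12+\varepsilon)\log T$ eventually along Leb-a.e.\ ray, so the gap is $O(T^{1/2+\varepsilon})=o(T\log T)$.)

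Granting the gap bound, Theorem~\ref{thick-metric} gives, for Leb-a.e.\ $r$,
\[
d_{\text{thick}}(x_0,\gamma_T)\;=\;\max_{k\leqslant N}E(\gamma,H_k)\;+\;(c+o(1))\,T\log T,\qquad c:=\Big(\tfrac2\pi\Big)\frac{\ell(T^1X_{\text{cusp}})}{\ell(T^1X)}>0,
\]
and since $\max_{k\leqslant N}E(\gamma,H_k)\geqslant 0$ this yields $d_{\text{thick}}(x_0,h_Tx_0)\geqslant(c+o(1))\,T\log T$. Feeding this back through the Svarc--Milnor inequality gives $d_G(1,h_T)\geqslant\tfrac1A(c+o(1))\,T\log T-B$, which is $>\tfrac{c}{2A}\,T\log T$ for all $T$ large enough (depending on $r$); taking $M_1=c/(2A)$ finishes the proof. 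I expect the one real obstacle to be the gap estimate above: the subtlety is that two points which are close in the $\H^2$-metric can be far apart in $d_{\text{thick}}$ when a horoball separates them, so one must genuinely use that $h_Tx_0$ and $\pi(\gamma_T)$ lie near the boundary of the \emph{same} horoball, rather than merely that they are $\H^2$-close.
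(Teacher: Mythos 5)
Your overall route coincides with the paper's: Theorem \ref{wordmetric} is stated there as an immediate consequence of Theorem \ref{thick-metric} via the Svarc--Milnor quasi-isometry, with no further detail supplied, so the only substantive content you add is the comparison between $\pi(\gamma_T)$ and the nearest lattice point $h_Tx_0$. On that point, however, your primary claim is false. Normalize so that the horoball $H$ containing $\gamma_T$ is $\{y>1\}$ in the upper half-plane, with $\gamma_T=(0,e^{\delta})$ and $\pi(\gamma_T)=(0,1)$. For a boundary point $(x,1)\in\partial H$ one computes $\cosh d_{\H^2}\big((x,1),\gamma_T\big)=\cosh\delta+\frac{x^{2}}{2e^{\delta}}$, so the set of points of $\partial H$ within $\H^2$-distance $\delta+R$ of $\gamma_T$ is a horocyclic arc of length roughly $2e^{\delta}\sqrt{e^{R}-1}$: the horizontal offset is \emph{not} bounded in terms of $R$ alone. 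All points of this arc are at distance between $\delta$ and $\delta+R$ from $\gamma_T$, so the nearest lattice point can perfectly well sit at horocyclic offset comparable to $e^{\delta}$ from $\pi(\gamma_T)$ (a lattice point slightly closer to $\partial H$ but far along the horocycle can beat the one directly below $\gamma_T$). Hence $d_{\text{thick}}(\pi(\gamma_T),h_Tx_0)$ need not be $O(1)$; in general one only gets a bound of order $e^{\delta}$, and it can certainly be of order $\delta$.

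Fortunately your parenthetical fallback is correct and suffices, so the proof goes through once the $O(1)$ claim is discarded. Writing $h_Tx_0=(x,y)$, the constraint $d_{\H^2}(\gamma_T,h_Tx_0)\leqslant\delta+R$ forces $e^{-R}\lesssim y\leqslant 1$ and $|x|\leqslant C_Re^{\delta}$; travelling along $\partial H$ from $(0,1)$ to $(x,1)$ costs at most $C_Re^{\delta}$ in $d_{\text{thick}}$, and two thick points at bounded $\H^2$-distance are at bounded $d_{\text{thick}}$-distance (by cocompactness of the $G$-action on $\widetilde{X}_{\text{thick}}$ and continuity of the path metric), so $d_{\text{thick}}(\pi(\gamma_T),h_Tx_0)=O_R(e^{\delta})$. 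Sullivan's logarithm law gives $\delta\leqslant(\frac{1}{2}+\varepsilon)\log T$ for all large $T$ along Leb-a.e.\ ray, so the gap is $O(T^{1/2+\varepsilon})=o(T\log T)$, and the remaining steps (dropping $\max_{k\leqslant N}E(\gamma,H_k)\geqslant 0$, then applying the Svarc--Milnor inequality) are fine. One small point worth making explicit: in Theorem \ref{thick-metric} the quantity $d_{\text{thick}}(x_0,\gamma_T)$ should be read as $d_{\text{thick}}(x_0,\pi(\gamma_T))$ when $\gamma_T$ lies inside a horoball, which is how your triangle inequality uses it.
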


In fact, if the contribution from the largest excursion is removed, then the word metric grows like $T \log T$ up to uniform multiplicative and additive constants. Theorem \ref{wordmetric} should be thought of as a refinement of Proposition 5.6 in \cite{Gad-Mah-Tio} which states that along a Leb-generic geodesic ray the ratio $d_G(1,h_T)/T$ goes to infinity as $T \to \infty$.

\subsection{Moduli space of quadratic differentials} Let $S$ be a hyperbolic surface of finite type. $S$ is non-sporadic if it is not a sphere with at most four punctures or boundary components or a torus with at most one puncture or boundary component. In the sporadic examples, the \teichmuller space is either trivial or isometric to $\H^2$ and the mapping class group is a non-uniform lattice in $SL(2, \R)$. This reduces us to the previous case. 

The \teichmuller space $\mathcal{T}(S)$ is the space of marked conformal structures on $S$. Alternatively by uniformization, it is the space of complete marked hyperbolic metrics on $S$. The mapping class group $\textup{Mod}(S)$ is the group of orientation preserving diffeomorphisms of $S$ modulo isotopy. $\textup{Mod}(S)$ acts on $\mathcal{T}(S)$ by changing the marking and the quotient $\mathcal{M}(S) = \textup{Mod}(S) \backslash \mathcal{T}(S)$ is the moduli space of Riemann surfaces. 

The \teichmuller metric is given by 
\[
d_{\mathcal{T}}(X, Y) = \frac{1}{2} \inf_{f} \log K(f)
\]
where the infimum is taken over all quasi-conformal maps $f: X \to Y$, and $K(f)$ is the quasi-conformal constant of $f$. The group $\textup{Mod}(S)$ acts by isometries of the \teichmuller metric. For $\epsilon > 0$ small enough, the $\epsilon$-thin part $\mathcal{T}(S)_\epsilon$ is the set of hyperbolic surfaces $X$ that contain a simple closed curve with hyperbolic length less than $\epsilon$. The thin part $\mathcal{T}(S)_\epsilon$ is $\textup{Mod}(S)$ invariant. 

For a Riemann surface $X$, let $\mathcal{Q}(X)$ be the set of meromorphic quadratic differentials on $X$ with simple poles at the punctures. If $(k_1, k_2, \dots, k_r)$ are the multiplicities of the zeros then $k_1 + k_2 + \dots + k_r = 2g-2+n$, where $n$ is the number of punctures. A quadratic differential is equivalent to a half-translation structure on $S$, i.e. it defines charts from $S$ to $\C$ with transition functions of the form $\pm z + c$. The resulting flat metric has a cone singularity with cone angle $(k+2)\pi$ at a $k$-order zero (or with $k=-1$ for a simple pole) of the differential. A quadratic differential is unit area if the corresponding singular flat metric has area 1. The space of unit area quadratic differentials $\mathcal{Q}$ can be identified with the unit cotangent bundle to $\mathcal{T}(S)$ \cite{Hub-Mas}. The space $\mathcal{Q}$ is stratified according to the multiplicity of its zeros: we denote the strata with multiplicities $\alpha = (k_1, k_2, \dots, k_r)$ by $\mathcal{Q}(\alpha)$. A stratum $\mathcal{Q}(\alpha)$ may be disconnected. The number of connected components is finite and they have been classified. See \cite{Kon-Zor}, \cite{Lan}, \cite{Boi-Lan}. 

The periods/holonomies for a fixed basis of the homology of $S$ relative to the singularities give local co-ordinates on each stratum of quadratic differentials. The natural volume form in these co-ordinates, called the Masur-Veech measure denoted by $\mu_{\text{hol}}$, can be thought of as an analog of the Liouville measure. It is invariant under $\textup{Mod}(S)$ and descends to finite measure on the moduli space which we continue to denote $\mathcal{Q}(\alpha)$. See \cite{Mas}, \cite{Vee}.

In the flat metric defined by $q$, a {\em saddle connection} is a geodesic segment in the $q$-metric that connects a pair of (same or distinct) singularities. For a small enough $\epsilon > 0$, the $\epsilon$-thin part $\mathcal{Q}(\alpha)_\epsilon$ is the set of quadratic differentials $q \in \mathcal{Q}(\alpha)$ such that some saddle connection has $q$-length squared less than $\epsilon$. 

The affine action of $SL(2, \R)$ on $\C = \R^2$ preserves the transition functions to give a natural $SL(2, R)$ action on each stratum $\mathcal{Q}(\alpha)$. The action of the diagonal part defines the \teichmuller geodesic flow. The compact part $SO(2, \R)$ leaves the underlying conformal structure unchanged. Thus one gets an isometric embedding $\H^2 = SL(2, \R)/ SO(2, R) \rightarrow \T(S)$. These embeddings foliate $\T(S)$ and are called \teichmuller discs. We let $\D(q)$ be the \teichmuller disc given by the $SL(2, \R)$ orbit of $q$ which we denote by $SL(2, \R)(q)$.

The points $q'$ in $SL(2, \R)(q)$ for which a particular saddle connection $\beta$ has $q'$-length squared shorter than $\epsilon$ projects to a horoball in $\D(q)$. The point at infinity for $\D(q)$ is given by the direction in which $\beta$ is vertical. The ratios of holonomies of saddle connections that are parallel remain constant on $SL(2,\R)(q)$. Thus, in a collection of parallel saddle connections, the saddle connection with the shortest holonomy in $q$ determines the horoball.

\subsubsection*{$SL(2,\R)$ orbit closures and invariant measures.}
Recently, Eskin and Mirzakhani in \cite[Theorem 1.4]{Esk-Mir} show that ergodic $SL(2,\R)$-invariant probability measures are of Lebesgue class and supported on invariant complex submanifolds in $\mathcal{Q}(\alpha)$. These manifolds are affine in the sense that they are given by linear equations. Going further, Eskin, Mirzakhani and Mohammadi in \cite[Theorem 2.1]{Esk-Mir-Moh} show that all $SL(2, \R)$ orbit closures are affine invariant submanifolds. See \cite[Section 1]{Esk-Mir} for more details. More recently, Filip shows that these submanifolds are algebraic subvarieties. \cite{Fil}. 

Let $\mu$ be an ergodic $SL(2, \R)$-invariant probability measure supported on an affine invariant submanifold $\mathcal{N}$. For $\epsilon > 0$ small enough, the $\epsilon$-thin part $\mathcal{N}_\epsilon$ is the subset of $q \in \mathcal{N}$ such that some saddle connection has $q$-length squared less than $\epsilon$. Saddle connections $\beta_1, \beta_2$ are $\mathcal{N}$-parallel if they  are parallel for an open set of quadratic differentials in $\mathcal{N}$. We assume the following {\em regularity} condition for $\mu$. For $\epsilon, \kappa > 0$, let $\mathcal{N}_{\epsilon, \kappa}$ be the subset of $q \in \mathcal{N}$ that have at least one saddle connection $\beta_1$ with $\ell^2_q(\beta_1) \leqslant \epsilon$ and a saddle connection $\beta_2$ not $\mathcal{N}$-parallel to $\beta_1$ with $\ell^2_q(\beta_2) \leqslant \kappa$. We assume that there exists $m_1 > 0$ such that for $\epsilon, \kappa$ small enough
\[
\mu \left( \mathcal{N}_{\epsilon, \kappa} \right) \leqslant m_1 \epsilon \kappa.
\]
For $\mu_{\text{hol}}$, \cite[Section 10, Claim (7)]{Mas-Smi} proves the regularity above. A weaker regularity for any $SL(2,\R)$-invariant measure is proved in \cite[Theorem 1.2]{Avi-Mat-Yoc}. 

\subsubsection*{$SL(2,\R)$-invariant loci.} 
For $q \in \mathcal{N}$, let $V(q) \in \R^2 \setminus \{(0,0)\}$ be an assignment of a non-empty subset (with multiplicity) of holonomies of saddle connections on $q$. We require that the assignment varies linearly under $SL(2,\R)$ action, i.e. $V(g q) = g V(q)$ for all $g \in SL(2, \R)$. Such an assignment $V$ will be called a $SL(2,\R)$-invariant locus. Let $c(V,\mu)$ be the Siegel-Veech constant associated to $V$ and $\mu$. We assume $V$ is such that it satisfies $c(V,\mu) > 0$. 

For $R \geqslant 1$, the $\epsilon/R$-thin part of $\mathcal{N}$ corresponding to $V$ is the subset of $q \in \mathcal{N}$ for which some saddle connection with holonomy in $V(q)$ has $q$-length squared less than $\epsilon/R$.  We denote the set by $\mathcal{N}(V)_{\epsilon/R}$. The regularity condition and the Siegel-Veech formula \ref{Siegel-Veech} can be used to prove the volume asymptotic
\begin{equation}\label{n-vol}
\lim_{R \to \infty} \frac{\mu(\mathcal{N}(V)_{\epsilon/R})}{\pi \epsilon/R} = c(V, \mu).
\end{equation}
See \cite[Section 7]{Esk-Mas-Zor} for the main ideas.

Here, we consider excursions in the horoballs for saddle connections with holonomy in $V$. Let $E_V(\gamma, T)$ be the sum till time $T$ of excursions of $\gamma$ in horoballs for saddle connections with holonomy in $V$. Let $N_V$ be the number of such excursions of $\gamma$ till time $T$. The main theorem we prove is the following.

\begin{theorem}\label{saddles-V}
Let $\mu$-be a regular $SL(2,\R)$-invariant measure supported on an affine invariant submanifold $\mathcal{N}$. For $\mu$-almost every $q \in \mathcal{N}$, the \teichmuller geodesic ray $\gamma$ that $q$ determines satisfies
\[
\lim_{T \to \infty} \frac{E(\gamma, T) - \max\limits_{k \leqslant N_V} E(\gamma, H_k)}{T \log T} = 2 \epsilon c(V,\mu)
\]
where $c(V,\mu)$ is the Siegel-Veech associated to $V$ and $\mu$. 
\end{theorem}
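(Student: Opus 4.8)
The plan is to deduce Theorem~\ref{saddles-V} from a trimmed-sum strong law for the sequence of successive $V$-excursions of $\gamma$, following the template used for Theorem~\ref{Esum}: the role played there by the volume asymptotic for $X_{\text{cusps}}$ and by exponential mixing of the geodesic flow is played here by the Siegel--Veech volume asymptotic~\eqref{n-vol} and by exponential mixing of the Teichm\"uller geodesic flow $g_t$. Since $\mu$ is $SL(2,\R)$-ergodic it is ergodic and mixing for $g_t$ (Howe--Moore), and we use that $g_t$ is moreover exponentially mixing for $\mu$, as indicated in the introduction. The first ingredient is an \emph{excursion dictionary}. Let $\beta$ range over saddle connections with holonomy in $V$. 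On any Teichm\"uller disc the set $\{q' : \ell^2_{q'}(\beta)<\epsilon\}$ is a genuine horoball in which $\ell^2_{q'}(\beta)$ decays like the reciprocal of the Euclidean height, so if $\gamma$ enters such a horoball $H$ reaching minimal squared length $\ell^2_{\min}$, the horocyclic segment that its projection $p(x_0,\gamma_T)$ traces inside $H$ has $d_{\text{thick}}$-length $b_0\,\epsilon/\ell^2_{\min}(\beta)$ up to a uniformly bounded error, where $b_0>0$ is fixed by the metric normalisation of a Teichm\"uller disc; since $p(x_0,\gamma_T)$ is a quasi-geodesic of $(\widetilde X_{\text{thick}},d_{\text{thick}})$ (\cite[Lemma~5.1]{Gad-Mah-Tio}), no shortcut through the thick part is available, whence
\[
E(\gamma,H) \;=\; b_0\,\frac{\epsilon}{\ell^2_{\min}(\beta)} \;+\; O(1),
\]
with the error uniform in the excursion. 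The \emph{regularity} hypothesis $\mu(\mathcal{N}_{\epsilon,\kappa})\leqslant m_1\epsilon\kappa$ enters here: it forces that, off a set of controllably small measure, a $V$-excursion is governed by a single $\mathcal{N}$-parallel class of saddle connections, so distinct $V$-horoballs are disjoint and the governing $\beta$, hence $\ell^2_{\min}$, is unambiguous.

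\emph{Statistics of the excursions.} Translating~\eqref{n-vol} through the dictionary, and averaging over the direction in which $\gamma$ enters a horoball (an integral that contributes a factor $\pi/2$ and cancels the $\pi$ of~\eqref{n-vol}), Birkhoff's theorem for $g_t$ gives, for $\mu$-a.e.\ $q$, that the number $N_V(T)$ of $V$-excursions up to time $T$ grows linearly, $N_V(T)/T\to 2\epsilon\,c(V,\mu)$ (a Kac-type count; cf.\ Lemma~\ref{n-T-lemma}), and that the empirical distribution of the excursion sizes stabilises, with a tail of Cauchy type:
\[
\frac{1}{N_V(T)}\,\#\bigl\{k\leqslant N_V(T) : E(\gamma,H_k) > s\bigr\} \;\longrightarrow\; \frac{1}{s}\qquad (s \text{ large}).
\]
In particular the excursion sizes have infinite mean, which is exactly why $E_V(\gamma,T)$ obeys no ordinary strong law and why trimming the maximum is forced.

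\emph{The trimmed-sum law.} With the tail estimate above, the remainder of the argument reproduces Diamond and Vaaler's proof of Theorem~\ref{Diam-Vaal} in the present dynamical setting: split each excursion size at a slowly growing threshold, use a Borel--Cantelli estimate to show that the number of excursions exceeding any fixed large level is $O(T)$ and that the sum of the over-threshold excursions other than the running maximum is $o(T\log T)$, and use an $L^2$/ergodic estimate to control the truncated partial sums. Exponential mixing of $g_t$ supplies the decay of correlations needed to verify the Borel--Cantelli hypotheses and to bound the variance of the truncated sums along the orbit; this replaces the use of the Gauss-map dynamics by Diamond and Vaaler. The upshot is that $\bigl(E_V(\gamma,T)-\max_{k\leqslant N_V}E(\gamma,H_k)\bigr)/(N_V\log N_V)\to 1$. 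Since $N_V(T)\sim 2\epsilon c(V,\mu)\,T$ one has $\log N_V\sim\log T$, and the multiplicative constant in $N_V/T$ enters only through an $o(\log T)$ additive term, so multiplying through yields exactly $\bigl(E_V(\gamma,T)-\max_{k\leqslant N_V}E(\gamma,H_k)\bigr)/(T\log T)\to 2\epsilon\,c(V,\mu)$, as claimed.

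\emph{Main obstacle.} The delicate point is the trimmed-sum law. A trimmed sum lies exactly on the borderline between being dominated by its single largest term and obeying a law of large numbers, so the Borel--Cantelli and second-moment estimates must be made quantitative, and this is where exponential --- not merely ergodic --- mixing of the Teichm\"uller flow is indispensable. The other, setting-specific, difficulty is that $\mathcal{N}_\epsilon$ is stratified by which $\mathcal{N}$-parallel class of saddle connections degenerates, and a priori several classes could be short simultaneously along $\gamma$; the regularity hypothesis on $\mu$ is precisely the quantitative input that renders this negligible, so that the $V$-excursions are well separated and each is cleanly attached to a single Siegel--Veech count. Finally, carrying the numerical constants through the reduction --- so as to land on $2\epsilon c(V,\mu)$ rather than merely a positive multiple of $\epsilon c(V,\mu)$ --- is the last piece of bookkeeping, turning on the angular average that produces the $\pi/2$ and on the curvature normalisation of Teichm\"uller discs.
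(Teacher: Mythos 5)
Your high-level strategy (trim the maximum, control it by a Borel--Cantelli/mixing argument, and get the $T\log T$ growth from the Siegel--Veech volume asymptotic \ref{n-vol} plus exponential mixing) is the right philosophy, but two of your intermediate steps are genuine gaps. First, the whole weight of the constant $2\epsilon c(V,\mu)$ is placed on two asserted per-excursion statistics --- $N_V(T)/T\to 2\epsilon c(V,\mu)$ and the tail law $\tfrac{1}{N_V}\#\{k:E(\gamma,H_k)>s\}\to 1/s$ --- neither of which is proved, and whose individual constants are doubtful (a direct count of saddle connections that become shorter than $\sqrt{\epsilon}$ by time $T$, via the a.e.\ Siegel--Veech counting of holonomy vectors in the region $\{|xy|\lesssim\epsilon,\ |x|<\sqrt{\epsilon},\ |y|\leqslant e^{T}\}$, gives a linear growth rate whose constant is \emph{not} $2\epsilon c(V,\mu)$; only the product of your two constants is constrained by the answer, and you have verified neither factor, deferring exactly the bookkeeping where the theorem's constant is created). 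The paper's proof deliberately avoids ever identifying $N_V(T)/T$ or a per-excursion tail: it replaces the trimmed sum by the time integral of a $T$-dependent truncation $\Psi_R$ of the function $q\mapsto(2/\pi)\epsilon/\ell^2_q$, computes $\Vert\Psi_R\Vert_{L^1}\sim 2\epsilon c(V,\mu)\log R$ and $\Vert\Psi_R\Vert_{L^2}\sim 2\sqrt{\epsilon c(V,\mu)/\pi}\,\sqrt R$ directly from \ref{n-vol} (Lemma \ref{phi-norms}), and then applies the effective ergodic theorem \ref{ergodic-rate}; the constant comes out of the $L^1$ norm, with no renewal-type input needed. If you want to keep your per-excursion route you must actually prove the two statistics for the dependent excursion sequence, which is at least as hard as the paper's $L^1/L^2$ computations.

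Second, your treatment of simultaneous excursions is not correct as stated: regularity \ref{regularity} does \emph{not} make distinct $V$-horoballs disjoint or the overlaps negligible ``off a set of controllably small measure'' --- the paper emphasizes that a typical Teichm\"uller geodesic \emph{does} run several excursions concurrently, and the number of simultaneously short, non-$\mathcal{N}$-parallel saddle connections is unbounded over the orbit closure. Controlling their total contribution is the paper's main technical work: one needs the Eskin--Masur bound \ref{number-saddles} on the number of short saddle connections in terms of the shortest one, combined with regularity, to show that replacing $\Psi_R$ by the sum-over-classes function $\widehat{\Psi}_R$ changes the $L^1$ norm only by $O(1)$ and the $L^2$ norm by $o(\sqrt R)$ (Lemmas \ref{phihat-norms} and \ref{c-phihat}); moreover, for a general invariant locus $V$ the globally shortest saddle connection need not lie in $V$ and can be arbitrarily short, which forces the additional truncation at depth $R^a$ justified by Lemma \ref{lemma:a}. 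None of these ingredients appears in your proposal, and without a quantitative bound on the \emph{number} of concurrent short classes the pairwise regularity estimate alone cannot close the argument, so both your excursion dictionary (``the governing $\beta$ is unambiguous'') and your second-moment/Borel--Cantelli step are unsupported at exactly the point where the quadratic-differential setting differs from the lattice case.
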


\subsubsection{Configurations with cylinders.}
For the analog of Theorem \ref{wordmetric}, we state a special case of Theorem \ref{saddles-V}. For completeness we give some background.

For a connected component of a stratum $\mathcal{Q}(\alpha)$, a configuration $\cC$ of saddle connections is a geometric type of maximal collections of homologous saddle connections on a translation or half-translation surface in it. Here, the homology is the appropriate relative homology; see \cite{Mas-Zor} for details. The condition in homology implies that the saddle connections in a configuration are parallel. For holomorphic 1-forms their holonomies coincide. For quadratic differentials their holonomies can take two values which differ by a factor of 2. The saddle connections with the smaller holonomy will be called the small saddle connections in $\cC$. It was shown in \cite{Esk-Mas-Zor} and \cite{Mas-Zor} that in a $\mu_{\text{hol}}$-typical degeneration all saddle connections in some configuration shrink to length zero. A configuration $\cC$ gives a $SL(2,\R)$-invariant locus $V_{\cC}$ for $\mathcal{Q}(\alpha)$. By the discussion above $c(V, \mu_{\text{hol}}) > 0$.

A special subset of configurations corresponds to metric cylinders. A metric cylinder is an embedded cylinder that is a union of freely homotopic closed trajectories of $q$ such that the boundary components are a concatenation of saddle connections. If some of the saddle connections in a configuration $\cC$ bound a metric cylinder, we call $\cC$ a configuration with cylinders. Masur and Zorich \cite{Mas-Zor} show that each boundary component of such cylinders has exactly one or two saddle connections in $\cC$. The $q$-length of the core curve is equal to the boundary saddle connection or twice the length of one of the boundary saddle connections depending on the case.

Given $\cC$, the thin part of $SL(2, \R)(q)$ corresponding to $\cC$ are points for which the length squared of the small saddle connections in $\cC$ is less than $\epsilon$. Its projection to $\D(q)$ is a horoball. The point at infinity for the horoball is the direction in which the saddle connections in $\cC$ are vertical. 

Quadratic differentials on hyperbolic surfaces with short curves necessarily have short saddle connections but the converse need not be true. However, if $\epsilon$ is sufficiently small compared to the $q$-area of a cylinder then the core curve is also short in the underlying hyperbolic metric. For some constant $0 < \sigma < 1$ small enough depending on the orbit closure, we can specialize further to configurations with cylinders in which some cylinder has area at least $\sigma$. Such a restriction gives a horoball \texttt{"}packing\texttt{"}: any point in $\D(q)$ is contained at most $3g- 3 + n$ horoballs. By construction, the packing is $\textup{Mod}(S)$ equivariant. Masur \cite{Mas} showed that in each \teichmuller disc the packing satisfies Sullivan's criteria and used it to prove the lower bound in the log law: a Lebesgue typical geodesic ray in every \teichmuller disc is recurrent to the thick part with $\limsup$ of the maximum depth in $\T(S)_\epsilon$ asymptotically of size $(1/2) \log T$. 

Let $V$ be the subset of holonomies of saddle connections forming configurations with cylinders such that some cylinder has area at least $\sigma$. For a geodesic $\gamma \in \mathcal{Q}(\alpha)$, let $E_{\text{cyl}_\sigma}(\gamma,T)$ be the sum till time $T$ of excursions of $\gamma$ in horoballs for such configurations. Let $N_{\text{cyl}_\sigma}(T)$ be the number of such excursions till time $T$. As a special case of Theorem \ref{saddles-V}

\begin{theorem}\label{cyl}
For $\mu_{\text{hol}}$-almost every $q \in \mathcal{Q}(\alpha)$, the \teichmuller geodesic $\gamma$ that $q$ determines satisfies 
\[
\lim_{T \to \infty} \frac{E_{\text{cyl}_\sigma}(\gamma, T) - \max\limits_{k \leqslant N_{\text{cyl}_\sigma}} E(\gamma, H_k)}{T \log T} = 2 \epsilon c_{\text{cyl}_\sigma}(\alpha),
\]
where $c_{\text{cyl}_\sigma}(\alpha)$ is the Seigel-Veech constant for $\mathcal{Q}(\alpha)$ for configurations with cylinders such that some cylinder is of area at least $\sigma$.
\end{theorem}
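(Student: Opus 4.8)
The plan is to obtain Theorem~\ref{cyl} as a direct application of Theorem~\ref{saddles-V}. Since $\mathcal{Q}(\alpha)$ has finitely many connected components, it suffices to prove the statement for $\mu_{\text{hol}}$-almost every $q$ in a fixed component $\mathcal{N}$, with $\mu$ the normalized restriction of $\mu_{\text{hol}}$ to $\mathcal{N}$ (an ergodic $SL(2,\R)$-invariant probability measure by Masur and Veech). Define the assignment $V = V_{\text{cyl}_\sigma}$ by letting $V_{\text{cyl}_\sigma}(q)$ be the holonomies, with multiplicity, of the small saddle connections of those configurations $\cC$ with cylinders on $q$ for which some cylinder of $\cC$ has $q$-area at least $\sigma$. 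Then tautologically $E_V(\gamma, T) = E_{\text{cyl}_\sigma}(\gamma, T)$, $N_V = N_{\text{cyl}_\sigma}$, and $\mathcal{N}(V)_{\epsilon/R}$ is the thin part described before Theorem~\ref{cyl}. So once the hypotheses of Theorem~\ref{saddles-V} are verified for $(\mathcal{N}, \mu, V_{\text{cyl}_\sigma})$, that theorem delivers exactly the stated limit, with $2\epsilon\, c(V_{\text{cyl}_\sigma}, \mu) = 2\epsilon\, c_{\text{cyl}_\sigma}(\alpha)$. Three things need checking: regularity of $\mu$, that $V_{\text{cyl}_\sigma}$ is a genuine $SL(2,\R)$-invariant locus, and positivity of $c_{\text{cyl}_\sigma}(\alpha)$.

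Regularity is immediate from the discussion preceding Theorem~\ref{saddles-V}: on a single stratum component, two saddle connections are $\mathcal{N}$-parallel exactly when they are homologous in the sense of \cite{Mas-Zor}, equivalently remain parallel throughout $\mathcal{N}$; hence $\mathcal{N}_{\epsilon,\kappa}$ is the locus of $q$ carrying a short saddle connection together with a short, non-homologous one, and $\mu_{\text{hol}}(\mathcal{N}_{\epsilon,\kappa}) \leqslant m_1 \epsilon \kappa$ is precisely \cite[Section~10, Claim~(7)]{Mas-Smi}.

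That $V_{\text{cyl}_\sigma}$ is an $SL(2,\R)$-invariant locus follows because a configuration is a combinatorial and relative-homological datum, and the linear $SL(2,\R)$ action preserves relative homology classes; moreover, for $g \in SL(2,\R)$ the flat geodesics in a given direction are carried to flat geodesics in the image direction, so metric cylinders go to metric cylinders, with areas preserved since $\det g = 1$. Thus $g$ sends the small saddle connections in configurations with a cylinder of area $\geqslant \sigma$ on $q$ to the corresponding ones on $gq$, and their holonomies transform by $g$, i.e. $V_{\text{cyl}_\sigma}(gq) = g\, V_{\text{cyl}_\sigma}(q)$. On the $\mu$-null set of $q$ carrying no such configuration one may set $V_{\text{cyl}_\sigma}(q)$ to any fixed saddle-connection holonomy; this is harmless, as $V_{\text{cyl}_\sigma}$ enters Theorem~\ref{saddles-V} only via $\mathcal{N}(V)_{\epsilon/R}$, $E_{\text{cyl}_\sigma}$ and $N_{\text{cyl}_\sigma}$, all defined by the explicit condition on short small saddle connections.

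Positivity of $c_{\text{cyl}_\sigma}(\alpha)$ is the only step with real content. By \cite{Esk-Mas-Zor} and \cite{Mas-Zor}, a $\mu_{\text{hol}}$-typical degeneration of $\mathcal{N}$ collapses exactly the saddle connections of some configuration, and configurations containing metric cylinders occur among these; one then checks that the collapsing cylinder can be taken of $q$-area bounded below by the fixed $\sigma$ (the constant used for the horoball packing), so that restricting to such configurations still leaves a positive Siegel--Veech count $c_{\text{cyl}_\sigma}(\alpha)$. With $c_{\text{cyl}_\sigma}(\alpha) > 0$, the volume asymptotic \eqref{n-vol} holds for $V_{\text{cyl}_\sigma}$, and Theorem~\ref{saddles-V} applies verbatim. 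I do not expect any obstacle in the reduction itself; the one point requiring genuine input is the positivity just discussed, and it is furnished by the Siegel--Veech computations of \cite{Esk-Mas-Zor} and \cite{Mas-Zor}.
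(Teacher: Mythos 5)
Your proposal is correct and is essentially the paper's own route: the paper obtains Theorem \ref{cyl} directly as the special case of Theorem \ref{saddles-V} with $V$ the holonomies of small saddle connections in configurations containing a cylinder of area at least $\sigma$, with regularity of $\mu_{\text{hol}}$ supplied by Masur--Smillie and positivity of $c_{\text{cyl}_\sigma}(\alpha)$ (for $\sigma$ small) coming from the Masur--Zorich and Eskin--Masur--Zorich analysis of typical degenerations. Your verification of the locus being $SL(2,\R)$-equivariant and of the positivity is, if anything, spelled out in more detail than in the paper.
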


Using Theorem \ref{cyl}, we can prove a lower bound on word-metric growth along typical \teichmuller geodesics. The key point is that along a \teichmuller geodesic the twisting in the core curve of a metric cylinder is up to a uniform multiplicative constant, $A/\epsilon$ times the excursion, where $A$ is the $q$-area of the cylinder. See \cite[Proposition 2.7]{Gad-Mah-Tio}. By Mumford compactness the quotient $\mathcal{M}(S) \setminus \mathcal{M}(S)_{\epsilon}$ is compact. Hence, $\textup{Mod}(S)$ is quasi-isometric to $\T(S) \setminus \T(S)_\epsilon$. With a basepoint $X_0$ in the thick part, the orbit $\textup{Mod}(S) X_0$ will be called a \teichmuller lattice. If $\gamma$ is recurrent to the thick part then along recurrence times $\gamma_T$, there is a lattice point $h_T X_0$ closes to $\gamma_T$. The distance between $\gamma_T$ and $h_T X_0$ is bounded by the diameter of $\mathcal{M}(S) \setminus \mathcal{M}(S)_\epsilon$. Because of the compactness of the thick part, this diameter is finite. As shown in \cite[Proposition 3.11]{Gad-Mah-Tio}, along a recurrent \teichmuller geodesic $\gamma$ the total excursion $E(\gamma, T)$ in the Masur collection gives a coarse lower bound on the word metric of the approximating group elements $h_T$, i.e. there exists constants $a_1, a_2 >  0$ such that 
\[
d_G(1, h_T) \geqslant a_1 E(\gamma, T) - a_2.
\]
Hence, as a direct consequence of Theorem \ref{cyl} we get
\begin{theorem}\label{word-metric-2}
There exists  a constant $M_2 > 0$ depending on the word metric such that for Leb-almost every $q \in \mathcal{Q}(X_0)$ the approximating group elements $h_T$ along the \teichmuller geodesic $\gamma$ that $q$ determines satisfy
\[
M_2T \log T < d_G(1, h_T).
\]
for all $T$ sufficiently large depending on $q$.
\end{theorem}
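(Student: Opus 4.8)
The plan is to read the statement off Theorem~\ref{cyl} together with the coarse lower bound recorded just above it, $d_G(1,h_T) \geqslant a_1 E_{\text{cyl}_\sigma}(\gamma,T) - a_2$, from \cite[Proposition~3.11]{Gad-Mah-Tio} (valid along any recurrent \teichmuller geodesic; the total excursion in the Masur collection appearing there is precisely our $E_{\text{cyl}_\sigma}(\gamma,T)$). The only point that requires a word is that Theorem~\ref{cyl} controls the difference $E_{\text{cyl}_\sigma}(\gamma,T) - \max_{k\leqslant N_{\text{cyl}_\sigma}} E(\gamma,H_k)$ rather than $E_{\text{cyl}_\sigma}(\gamma,T)$ itself; but each excursion is nonnegative, so $E_{\text{cyl}_\sigma}(\gamma,T) \geqslant E_{\text{cyl}_\sigma}(\gamma,T) - \max_{k} E(\gamma,H_k)$, and dropping the largest excursion only strengthens a lower bound.

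Concretely, I would fix $q$ in the full $\mu_{\text{hol}}$-measure subset of $\mathcal{Q}(\alpha)$ on which (i) the limit in Theorem~\ref{cyl} holds, (ii) $\gamma$ is recurrent to the thick part (automatic $\mu_{\text{hol}}$-almost everywhere by ergodicity of the \teichmuller flow), so that the approximating lattice points $h_T X_0$ are defined and at bounded distance from $\gamma_T$ by Mumford compactness, and (iii) the bound of \cite[Proposition~3.11]{Gad-Mah-Tio} applies. Since $c_{\text{cyl}_\sigma}(\alpha)>0$ — this is the standing positivity hypothesis, which holds here because Masur's horoball packing by configurations with a cylinder of area at least $\sigma$ satisfies Sullivan's ubiquity criterion, i.e. the associated Siegel--Veech constant is positive — Theorem~\ref{cyl} gives, for all $T$ large depending on $q$,
\[
E_{\text{cyl}_\sigma}(\gamma,T) \;\geqslant\; E_{\text{cyl}_\sigma}(\gamma,T) - \max_{k\leqslant N_{\text{cyl}_\sigma}} E(\gamma,H_k) \;\geqslant\; \epsilon\, c_{\text{cyl}_\sigma}(\alpha)\, T\log T,
\]
whence $d_G(1,h_T) \geqslant a_1\epsilon\, c_{\text{cyl}_\sigma}(\alpha)\, T\log T - a_2 \geqslant M_2 T\log T$ for $T$ large, taking any $M_2 < a_1\epsilon\, c_{\text{cyl}_\sigma}(\alpha)$. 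The constant $M_2$ depends only on the chosen generating set (through the quasi-isometry constants $a_1,a_2$) and on $\alpha,\epsilon,\sigma$.

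To reach the stated conclusion for Lebesgue-almost every $q\in\mathcal{Q}(X_0)$ rather than $\mu_{\text{hol}}$-almost every $q\in\mathcal{Q}(\alpha)$, I would invoke the disintegration of $\mu_{\text{hol}}$ over $\mathcal{T}(S)$: its conditional measures on the unit cotangent spheres $\mathcal{Q}(X)$ lie in the Lebesgue class, the \teichmuller analogue of the remark in the Introduction that the conditional of Liouville measure on each tangent circle is the visual Lebesgue measure. By Fubini the $\mu_{\text{hol}}$-null exceptional set from the previous paragraph meets the fibre $\mathcal{Q}(X_0)$ in a Lebesgue-null set for the given thick basepoint $X_0$ (for a generic such basepoint, and then for any basepoint in the thick part by $\textup{Mod}(S)$-equivariance together with the recurrence argument of \cite{Gad-Mah-Tio}). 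Outside this null set the inequality above holds, which is the assertion.

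I expect the only genuinely non-formal ingredient to be this fibre transfer — verifying that restricting a full-measure statement on $\mathcal{Q}(\alpha)$ to the measure-zero slice $\mathcal{Q}(X_0)$ is legitimate, which rests on the Lebesgue-class disintegration of $\mu_{\text{hol}}$. The chain of inequalities and the positivity $c_{\text{cyl}_\sigma}(\alpha)>0$ are both already in hand from the earlier sections, so beyond the transfer the theorem is indeed a direct consequence of Theorem~\ref{cyl}.
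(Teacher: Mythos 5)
Your argument is exactly the paper's: Theorem~\ref{cyl} combined with the coarse bound $d_G(1,h_T)\geqslant a_1 E_{\text{cyl}_\sigma}(\gamma,T)-a_2$ from \cite[Proposition 3.11]{Gad-Mah-Tio}, noting that discarding the largest excursion only strengthens the lower bound and that $c_{\text{cyl}_\sigma}(\alpha)>0$, so the proposal is correct and takes essentially the same route (the paper records the theorem as a direct consequence of Theorem~\ref{cyl} with no further argument). Your additional disintegration/Fubini discussion of passing from $\mu_{\text{hol}}$-a.e.\ $q$ in the stratum to Leb-a.e.\ $q$ in the fixed fibre $\mathcal{Q}(X_0)$ addresses a point the paper leaves implicit rather than diverging from its proof.
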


\subsection{Strategy of proof}

The key idea is to approximate the sum of excursions till time $T$ by an integral over time of a function defined over $T^1 X_{\text{cusps}}$ or $\mathcal{N}(V)_\epsilon$. This function is not $L^1$. Analyzing the largest excursion, we prove that if for some $c> 1/2$ it exceeds $T(\log T)^c$, then it is the unique excursion that exceeds this threshold. This follows from a Borel-Cantelli argument which requires quasi-independence of excursions. We use mixing of the geodesic flow to establish quasi-independence. See Proposition \ref{two-large}. By removing the largest excursion from the sum we get a quantity that can be approximated by a suitable $T$-dependent truncation of the above function. This truncation is $L^1$. The leading term of its $L^1$ norm is a constant times $\log T$. The constant is in terms of the proportional volume of the cusp neighborhoods. To conclude the proof of the main theorems, we apply an effective ergodic theorem to the truncation. This shows that he integral over $[0,T]$ of the truncation is equal to $T$ times the $L^1$ norm of the truncation with an error term which is $o(T \log T)$. To prove the effective ergodic theorem viz. Theorem \ref{ergodic-rate}, we use a specific decay of correlations for the geodesic flow. This decay of correlations is independently due to Moore and Ratner \cite{Moo} \cite{Rat} in the context of non-uniform lattices in $SL(2, \R)$. For quadratic differentials, this is due to Avila-Resende \cite{Avi-Res} for the Masur-Veech measure, and Avila-Gou\"ezel for general $SL(2,\R)$-invariant measures.  

In the quadratic differentials setting matters are complicated by the fact that a half-translation surface can have several non-homologous configurations of short saddle connections. While this number is finite for any given half-translation surface there is no upper bound for it over the $SL(2,\R)$-orbit closure. This means that a \teichmuller geodesic can do several excursions simultaneously and typically it does so. We impose a regularity assumption for the $SL(2,\R)$-invariant measure namely quasi-independence for two non-homologous configurations to be simultaneously short. Our main technical work leverages this quasi-independence and a bound due to Eskin and Masur \cite{Esk-Mas} for the number of short saddle connections in terms of length of shortest saddle connection, to prove that the truncation is indeed $L^1$. We also show that asymptotically the leading term of its $L^1$-norm is a constant times $\log T$. The constant is related to the asymptotic of volumes of thin parts. By Siegel-Veech theory, these are the associated Siegel-Veech constants.

\subsection{Acknowledgements} 

The central question considered in the paper arose in joint work with J. Maher and G. Tiozzo \cite{Gad-Mah-Tio}. I thank them for the initial discussion and for brining the paper by Diamond and Vaaler \cite{Dia-Vaa} to my attention. I thank J. Chaika, C. Matheus and M. Pollicott for useful discussions about the ergodic theory, J. Athreya and A. Eskin for useful discussions about the volume asymptotics for the thin parts of the moduli spaces of quadratic differentials. I thank the Institut Henri Poincar\'e and the Newton Institute for their hospitality during which parts of this work were done. I thank the Global Research Fellowship with the Institute of Advanced Study at the University of Warwick for the support. 
 
\section{Ergodic Theory}
This section develops the more abstract ergodic theoretic tools which will be used later. In particular, the main goal is to derive the effective ergodic theorem viz. Theorem \ref{ergodic-rate} which gives a uniform rate of convergence in the ergodic theorem simultaneously for a sequence of non-negative functions that satisfy a certain decay of correlations. 

Let $(X, \mathcal{B}, \ell)$ be a probability measure space. Let $g_t$ be a measure preserving flow on $X$ such that $g_t$ is exponentially mixing. More precisely, we assume that an appropriate subspace of $L^2(X)$ satisfies following decay of correlations: if $f_1$ and $f_2$ are functions in the subspace then $\int_X f_1 \, d\ell = \int_X f_2 \,d\ell= 0$ and there exists constants $K, \rho> 0$ such that
\begin{equation}\label{corr}
\left\vert \int_X f_1(g_s x) f_2(g_t x) \, d\ell \right\vert \leqslant K \vert t-s\vert e^{-\rho \vert t - s \vert} \Vert f_1 \Vert_{L^2} \Vert f_2 \Vert_{L^2}.
\end{equation}

For a function $f \in L^1(X)$, let $I(f) = \int_X f d\ell$. We denote by $\mathcal{L}$ the subspace in $L^2(X)$ of functions $f$ such that  the function $f - I(f)$ satisfies the decay of correlations \ref{corr} above.
\begin{lemma}\label{L2}
Any function $f \in \mathcal{L}$ with $I(f) = 0$ satisfies: 
\begin{equation}\label{L2-bound}
\int_X \left(\int_0^T f(g_t x) \, dt\right)^2 \, d\ell \leqslant 2KT \Vert f \Vert^2_{L^2}. 
\end{equation}
\end{lemma}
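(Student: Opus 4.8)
The plan is to expand the square and reduce the bound to an integral of the correlation function.

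\medskip

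\noindent\textbf{Proof proposal.} First I would write
\[
\int_X \left(\int_0^T f(g_t x)\, dt\right)^2 d\ell = \int_X \int_0^T \int_0^T f(g_s x)\, f(g_t x)\, ds\, dt\, d\ell,
\]
and interchange the order of integration (justified by nonnegativity after passing to $|f|$, or by the fact that the integrand is in $L^1$ of the product space, since $f \in L^2(X) \subseteq L^1(X)$ and $g_t$ preserves $\ell$). This gives
\[
\int_0^T \int_0^T \left( \int_X f(g_s x)\, f(g_t x)\, d\ell \right) ds\, dt.
\]
Since $I(f) = 0$, the function $f$ itself satisfies the decay of correlations \eqref{corr}, so the inner integral is bounded in absolute value by $K |t-s| e^{-\rho|t-s|} \Vert f \Vert_{L^2}^2$. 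Hence the whole expression is at most
\[
K \Vert f \Vert_{L^2}^2 \int_0^T \int_0^T |t-s|\, e^{-\rho|t-s|}\, ds\, dt.
\]

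\medskip

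\noindent The remaining task is the elementary estimate of the double integral $\int_0^T\int_0^T |t-s| e^{-\rho|t-s|}\, ds\, dt$. By symmetry this is $2\int_0^T \int_0^t (t-s) e^{-\rho(t-s)}\, ds\, dt$; substituting $u = t-s$ in the inner integral gives $2\int_0^T \left( \int_0^t u e^{-\rho u}\, du \right) dt \le 2\int_0^T \left( \int_0^\infty u e^{-\rho u}\, du \right) dt = 2T/\rho^2$. One then absorbs the constant $2/\rho^2$ into $K$ (or, more honestly, states the bound as $(2K/\rho^2) T \Vert f\Vert_{L^2}^2$ and notes that $\rho$ is fixed so this is of the asserted form $2KT\Vert f\Vert_{L^2}^2$ after renaming the constant; since the lemma only needs linear growth in $T$, the precise constant is immaterial for the applications).

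\medskip

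\noindent\textbf{Main obstacle.} There is no serious obstacle here — the lemma is a soft consequence of \eqref{corr}. The only points requiring a word of care are (i) justifying the application of Fubini/Tonelli, which is routine given $f \in L^2 \subseteq L^1$ on a probability space and the measure-preservation of $g_t$; and (ii) keeping track of whether the constant $K$ in the statement is meant to already absorb the $1/\rho^2$ factor. I would simply carry out the computation and, if needed, note that $\rho$ is a fixed structural constant so the stated form of the bound holds after adjusting $K$.
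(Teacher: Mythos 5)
Your proof is correct and follows essentially the same route as the paper: expand the square, interchange the order of integration, apply the decay of correlations \ref{corr}, and then estimate $\int_0^T\int_0^T \vert t-s\vert e^{-\rho\vert t-s\vert}\,ds\,dt \leqslant 2T/\rho^2$ elementarily. Your caveat about the constant is apt: the paper computes the double integral exactly as $K\Vert f\Vert_{L^2}^2\bigl(\tfrac{T}{\rho^2}(1+e^{-\rho T}) + \tfrac{2}{\rho^3}(e^{-\rho T}-1)\bigr)$ and then passes to $2KT\Vert f\Vert_{L^2}^2$, which likewise implicitly assumes $\rho\geqslant 1$ or an absorption of $1/\rho^2$ into $K$; as you observe, only the linear growth in $T$ matters for the applications.
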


\begin{proof}
Observe that
\begin{eqnarray*}
\int_X \left(\int_0^T f(g_t x) \, dt\right)^2 d\ell &=& \int_X \left( \int_0^T \int_0^T f(g_s x) f(g_t x) \, ds \,  dt \right) d\ell \\
&=& \int_0^T \int_0^T \left(\int_X f(g_s x) f(g_t x) d\ell \right) ds \, dt\\
&\leqslant& \int_0^T \int_0^T K\vert t-s \vert e^{-\rho \vert t -s\vert} \Vert f \Vert^2_{L^2} \, ds \, dt
\end{eqnarray*} 
where we have used the decay of correlations \ref{corr} in the last inequality. A direct computation shows
\begin{eqnarray*}
\int_0^T \int_0^T K\vert t-s \vert e^{-\rho\vert t -s\vert} \Vert f \Vert^2_{L^2} \, ds \, dt &=& K\Vert f \Vert^2_{L^2} \left( \frac{T}{\rho^2}(1+e^{-\rho T}) + \frac{2}{\rho^3}( -1+ e^{-\rho T})\right) \\
&\leqslant& 2KT \Vert f \Vert^2_{L^2}
\end{eqnarray*}
finishing the proof of the lemma.
\end{proof}

Suppose $n: \R \to \N$ is a function that is constant on each interval $[2^k, 2^{k+1})$. 
\begin{theorem}\label{ergodic-rate} 
For any $c>1/2, m > 1$ and any sequence of non-negative functions $f_j \in \mathcal{L}$, almost every $x$ satisfies
\begin{equation*}
\begin{split}
\frac{1}{m} T \Vert f_n \Vert_{L^1}- T^{1/2}(\log T)^c  \left(\Vert f_n \Vert^2_{L^2} - \Vert f_n \Vert^2_{L^1} \right)^{1/2} &\leqslant \int_0^T f_n(g_t x) dt \\
&\leqslant mT\Vert f_n \Vert_{L^1} - T^{1/2}(\log T)^c \left(\Vert f_n \Vert^2_{L^2} - \Vert f_n \Vert^2_{L^1}\right)^{1/2}
\end{split}
\end{equation*}
for all $T$ large enough depending on $x$ and where $n = n(T)$. 
\end{theorem}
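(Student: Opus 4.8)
The plan is to apply a standard Borel--Cantelli / variance argument, but uniformly over the sequence $f_j$ and along the dyadic times $T = 2^k$ on which $n$ is constant, and then to pass from dyadic times to all times by monotonicity of the integral $\int_0^T f_n(g_t x)\,dt$ in $T$ (using $f_n \geqslant 0$). First I would reduce to the case $I(f_j) = 0$: write $\bar f_j = f_j - I(f_j) = f_j - \Vert f_j\Vert_{L^1}$ (the last equality because $f_j \geqslant 0$), so that $\bar f_j \in \mathcal{L}$, $I(\bar f_j) = 0$, and $\Vert \bar f_j\Vert_{L^2}^2 = \Vert f_j\Vert_{L^2}^2 - \Vert f_j\Vert_{L^1}^2$, which is exactly the quantity appearing under the square root in the statement. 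Then
\[
\int_0^T f_n(g_t x)\,dt = T\,\Vert f_n\Vert_{L^1} + \int_0^T \bar f_n(g_t x)\,dt,
\]
so the theorem is equivalent to the two-sided bound $\bigl|\int_0^T \bar f_n(g_t x)\,dt\bigr| \leqslant (m-1)T\Vert f_n\Vert_{L^1} + T^{1/2}(\log T)^c\Vert\bar f_n\Vert_{L^2}$ for $T$ large (the $\frac{1}{m}$ versus $m-1$ discrepancy is harmless after renaming the constant, and one can absorb the lower-order term). Actually the cleanest target is: for a.e.\ $x$, eventually $\bigl|\int_0^T \bar f_n(g_t x)\,dt\bigr| \leqslant T^{1/2}(\log T)^c\,\Vert\bar f_n\Vert_{L^2}$, from which both displayed inequalities follow.

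The core estimate is Lemma \ref{L2}: for each $j$ and each $T$,
\[
\int_X \Bigl(\int_0^T \bar f_j(g_t x)\,dt\Bigr)^2 d\ell \leqslant 2KT\,\Vert \bar f_j\Vert_{L^2}^2.
\]
Fix a dyadic time $T_k = 2^k$, and let $j = n(T_k)$. By Chebyshev,
\[
\ell\Bigl\{ x : \Bigl|\int_0^{T_k}\bar f_j(g_t x)\,dt\Bigr| > \tfrac12 T_k^{1/2}(\log T_k)^c\,\Vert\bar f_j\Vert_{L^2}\Bigr\} \leqslant \frac{2KT_k\,\Vert\bar f_j\Vert_{L^2}^2}{\tfrac14 T_k (\log T_k)^{2c}\,\Vert\bar f_j\Vert_{L^2}^2} = \frac{8K}{(\log T_k)^{2c}} = \frac{8K}{(k\log 2)^{2c}}.
\]
Since $2c > 1$, the series $\sum_k (k\log 2)^{-2c}$ converges, so by Borel--Cantelli, for a.e.\ $x$ we have $\bigl|\int_0^{T_k}\bar f_{n(T_k)}(g_t x)\,dt\bigr| \leqslant \tfrac12 T_k^{1/2}(\log T_k)^c\,\Vert\bar f_{n(T_k)}\Vert_{L^2}$ for all $k$ large enough depending on $x$. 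The point is that the bad-set estimate is uniform in $j$: the factor $\Vert\bar f_j\Vert_{L^2}^2$ cancels, so we do not need any control on how $\Vert f_j\Vert_{L^2}$ or $\Vert f_j\Vert_{L^1}$ behave in $j$, which is what makes the "simultaneously for a sequence of functions" claim go through.

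The remaining step is to interpolate from dyadic times to general $T$: given $T$, let $k$ be such that $T_k \leqslant T < T_{k+1} = 2T_k$, and note $n(T) = n(T_k) =: j$ since $n$ is constant on $[2^k, 2^{k+1})$. Because $f_j \geqslant 0$, $\int_0^{T_k} f_j(g_t x)\,dt \leqslant \int_0^T f_j(g_t x)\,dt \leqslant \int_0^{T_{k+1}} f_j(g_t x)\,dt$; rewriting each side via $\int_0^{S} f_j = S\Vert f_j\Vert_{L^1} + \int_0^S \bar f_j$ and using the dyadic bound at $T_k$ and $T_{k+1}$ together with $T_k \leqslant T \leqslant 2T_k$, one gets
\[
T\Vert f_j\Vert_{L^1} - C\,T^{1/2}(\log T)^c\Vert\bar f_j\Vert_{L^2} \;\leqslant\; \int_0^T f_j(g_t x)\,dt \;\leqslant\; 2T\Vert f_j\Vert_{L^1} + C\,T^{1/2}(\log T)^c\Vert\bar f_j\Vert_{L^2}
\]
for a universal $C$ and all large $T$. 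Choosing the interpolation a bit more carefully — comparing with $T_k$ on the upper side and $T_{k+1}$ on the lower side, and noting $T_{k+1}\Vert f_j\Vert_{L^1} \leqslant 2T\Vert f_j\Vert_{L^1}$ — and then absorbing the $C\,T^{1/2}(\log T)^c$ slack against the difference between $mT\Vert f_n\Vert_{L^1}$ and $T\Vert f_n\Vert_{L^1}$ when $\Vert f_n\Vert_{L^1}$ is bounded below, or against a slightly larger exponent otherwise, yields the stated inequalities for any $m > 1$. The main obstacle is precisely this last bookkeeping: the displayed inequalities mix a multiplicative error ($m$ vs.\ $1/m$ on the $T\Vert f_n\Vert_{L^1}$ term) with an additive-looking error $T^{1/2}(\log T)^c\Vert\bar f_n\Vert_{L^2}$, and one must check that the dyadic-to-continuous passage and the Borel--Cantelli threshold can be arranged so that the loss stays within both the factor $m$ and the prescribed power of $\log T$ simultaneously, uniformly in $n$. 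Everything else is the routine variance-plus-Borel--Cantelli scheme powered by Lemma \ref{L2}.
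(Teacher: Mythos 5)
Your overall scheme is the same as the paper's (variance bound from Lemma \ref{L2}, Chebyshev with threshold $T(\log T)^{2c}\Vert \bar f\Vert_{L^2}^2$ so the norms cancel and the bad-set bound is uniform in $j$, Borel--Cantelli along a discrete time sequence, then interpolation to all $T$ via monotonicity of $\int_0^T f_n$ using $f_n\geqslant 0$), and your reduction $\bar f_j=f_j-\Vert f_j\Vert_{L^1}$ with $\Vert\bar f_j\Vert_{L^2}^2=\Vert f_j\Vert_{L^2}^2-\Vert f_j\Vert_{L^1}^2$ matches the paper exactly. The gap is in the last step. With dyadic times $T_k=2^k$ the monotonicity sandwich can only give $\int_0^T f_j\leqslant T_{k+1}\Vert f_j\Vert_{L^1}+\mathrm{err}\leqslant 2T\Vert f_j\Vert_{L^1}+\mathrm{err}$ (and $\geqslant \tfrac12 T\Vert f_j\Vert_{L^1}-\mathrm{err}$), i.e.\ the theorem only with $m=2$, whereas the statement demands every $m>1$ --- and this is essential, since Theorem \ref{Esum} extracts the exact limit $2C_X/\pi$ by intersecting over $m=1+1/a$. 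Your proposed fix, absorbing the multiplicative gap $(2-m)T\Vert f_j\Vert_{L^1}$ into the additive term $T^{1/2}(\log T)^c\Vert\bar f_j\Vert_{L^2}$, cannot work: already for a constant sequence $f_j=f$ with nonzero mean the additive term is $o(T\Vert f\Vert_{L^1})$, and in the intended application ($f_n=\Psi_{2^n}$, $\Vert f_n\Vert_{L^1}\asymp\log T$, $\Vert f_n\Vert_{L^2}\asymp 2^{n/2}$, $c<2/3$) one has $T^{1/2}(\log T)^c\Vert f_n\Vert_{L^2}\asymp T(\log T)^{3c/2}=o(T\log T)$, so the slack is of strictly lower order than the gap you need to absorb; and ``taking a slightly larger exponent'' changes the statement rather than proving it.

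The paper's remedy is precisely at this point: run the Borel--Cantelli argument along the finer sequence $T_k=2^{rk}$ with $r=1/a$ for a large integer $a$, so that consecutive times have ratio $2^r$, which can be made smaller than any given $m>1$; the series $\sum_k 2K/(rk)^{2c}$ still converges since $2c>1$, and because $a$ is an integer each interval $[T_k,T_{k+1})$ lies inside a single dyadic block, so $n(T)$ is still constant there. One also needs a second, shifted application of Chebyshev/Borel--Cantelli (the paper's estimate \ref{k+1}) so that the fluctuation at time $T_{k+1}$ is bounded by $T_k^{1/2}(\log T_k)^c\Vert\bar f_n\Vert_{L^2}\leqslant T^{1/2}(\log T)^c\Vert\bar f_n\Vert_{L^2}$, keeping the additive error exactly as stated. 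With that modification your argument goes through; as written, it establishes the theorem only for $m\geqslant 2$. (Incidentally, the minus sign in the upper bound of the statement is evidently a typo for a plus, as the paper's own proof produces; you correctly argue toward the ``$+$'' version.)
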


\begin{proof}
Given a function $f \in \mathcal{L}$ define
\[
F(x) = f(x) - I(f).
\]
Then $I(F) = 0$ and so by lemma \ref{L2}
\[
\int_X \left( \int_0^T F(g_t x) \,dt \right)^2 d\ell \leqslant 2KT \Vert F \Vert^2_{L^2}
\]
for all $T$. By Chebysheff's inequality, for any positive function $r(T, F)$ we have 
\begin{equation}\label{cheby1}
\ell \left( x \text{ such that } \left(\int_0^T F(g_t x) \,dt \right)^2 \geqslant r(T, F) \right) \leqslant \frac{2KT\Vert F \Vert^2_{L^2}}{r(T, F)}.
\end{equation}
Let $c > 1/2$ and set $r(T, F) = T(\log T)^{2c}\Vert F \Vert^2_{L^2}$ in \ref{cheby1}. Then we get
\begin{equation}\label{cheby2}
\ell \left( x \text{ such that } \left(\int_0^T F(g_t x) \,dt\right)^2 \geqslant T (\log T)^{2c}\Vert F \Vert^2_{L^2} \right) \leqslant \frac{2K}{ (\log T)^{2c}}.
\end{equation}
Starting from our sequence $f_j$, let $F_j$ be the sequence of functions given by 
\[
F_j(x) = f_j(x) - I(f_j).
\] 
The estimate \ref{cheby2} above is satisfied by all functions $F_j$ and in particular by $F_n$ where $n = n(T)$. Fix $r = 1/a$ for some positive integer $a>1$. Observe that for the sequence $T_k = 2^{rk}$ 
\[
\sum_{k=1}^{\infty} \frac{2K}{(\log T_k)^{2c}} = \sum_{k=1}^{\infty} \frac{2K}{(rk)^{2c}} < \infty.
\]
Hence by Borel-Cantelli lemma, almost every $x$ satisfies 
\begin{equation}\label{k}
\left(\int_0^{T_k} F_n(g_t x) \,dt\right)^2  \leqslant T_k (\log T_k)^{2c}\Vert F_n \Vert^2_{L^2}
\end{equation}
for all $k$ large enough depending on $x$. Similarly, setting $r(T, F) = (T/2^r)(\log (T/2^r))^{2c} \Vert F \Vert^2_{L^2}$ and shifting $n(T)$ to $n(T/2)$, the same reasoning by Borel-Cantellii lemma implies that almost every $x$ satisfies 
\begin{equation}\label{k+1}
\left( \int_0^{T_{k+1}} F_n(g_t x) \, dt \right)^2 \leqslant T_k(\log T_k )^{2c} \Vert F_n \Vert^2_{L^2}
\end{equation}
for all $k$ large enough depending on $x$. Hence a full measure set of $x$ satisfy both \ref{k} and \ref{k+1}. Noting that $F_n(g_t x) = f_n(g_t x) - I(f_n)$ and $\Vert F_n \Vert^2_{L^2} = \Vert f_n \Vert^2_{L^2} -I(f_n)^2$, the above estimates can be rewritten as
\[
\left \vert \int_0^{T_k} f_n(g_t x) \,dt  - T_k I(f_n) \right\vert \leqslant T_k^{1/2}(\log T_k)^c \left(\Vert f_n \Vert_{L^2}- I(f_n)^2\right)^{1/2}
\]
and
\[
\left \vert \int_0^{T_{k+1}} f_n(g_t x) \,dt  - T_{k+1} I(f_n) \right\vert \leqslant T_k^{1/2}(\log T_k)^c \left(\Vert f_n \Vert_{L^2}- I(f_n)^2\right)^{1/2}.
\]
Over the intermediate times $T_k < T < T_{k+1}$ the number $n$ does not vary. So the function $f_n$ being considered remains the same. Now we use the assumption that $f_n$ is a non-negative function to get an estimate such as above for these intermediate times. Since $f_n$ is non-negative, the time integral of $f_n$ is non-decreasing. In particular,
\[
\int_0^{T_k} f_n(g_t x) \,dt \leqslant \int_0^T f_n(g_t x) \,dt \leqslant \int_0^{T_{k+1}} f_n(g_t x) \,dt. 
\]
Observe that
\begin{equation}\label{left}
T_k I(f_n) -  T_k^{1/2}(\log T_k)^c \left(\Vert f_n \Vert^2_{L^2} - I(f_n)^2\right)^{1/2} \geqslant \frac{1}{2^r} TI(f_n) - T^{1/2} (\log T)^c \left( \Vert f_n \Vert^2_{L^2} - I(f_n) \right)^{1/2} 
\end{equation}
and 
\begin{equation}\label{right}
T_{k+1} I(f_n) + T_k^{1/2}( \log T_k)^c \left( \Vert f_n \Vert^2_{L^2} - I(f_n) \right)^{1/2} \leqslant 2^rTI(f_n) + T^{1/2} (\log T)^c \left( \Vert f_n \Vert^2_{L^2} - I(f_n) \right)^{1/2}. 
\end{equation}
Finally, note $I(f_n) = \Vert f_n \Vert_{L^1}$. The left hand side of \ref{left} is the lower bound in \ref{k} and the left hand side in \ref{right} is the upper bound in \ref{k+1}. Thus we get
\begin{align*}
\frac{1}{2^r} TI(f_n) - T^{1/2} (\log T)^c \left( \Vert f_n \Vert^2_{L^2} - I(f_n) \right)^{1/2} &\leqslant \int_0^T f_n(g_t x) \, dt \\ &\leqslant 2^rTI(f_n) + T^{1/2} (\log T)^c \left( \Vert f_n \Vert^2_{L^2} - I(f_n) \right)^{1/2}.
\end{align*}
The theorem follows by choosing $a$ large enough such that $r = 1/a$ satisfies $2^r < m$. 
\end{proof}

We also prove a variant of Lemma \ref{L2} which we will need later for quasi-independence of excursions.

\begin{lemma}\label{quasi-ind}
For any $S_1 < S_2 < T$ and non-negative function $f \in \mathcal{L}$
\begin{equation}\label{large-bound}
\int_X  \left( \int_{S_1}^{S_2} f(g_s x) \,ds \int_{S_2}^T  f(g_t x) \,dt \right)d\ell  < (S_2-S_1)(T-S_2) \Vert f \Vert^2_{L^1} + \frac{5K}{\rho} \left( \Vert f \Vert^2_{L^2} - I(f)^2\right)
\end{equation}
where $K, c$ are the constants in the decay of correlations \ref{corr}.
\end{lemma}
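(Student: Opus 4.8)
\noindent\emph{Sketch of proof.}\ The plan is to follow the template of Lemma~\ref{L2}: write $f$ as its mean $I(f)$ plus a mean-zero remainder, expand the product of the two time integrals into four pieces, and observe that the two ``mixed'' pieces integrate to zero against $\ell$, that the piece with both factors constant produces $(S_2-S_1)(T-S_2)\Vert f\Vert_{L^1}^2$, and that the remaining covariance piece is controlled by the decay of correlations \ref{corr}.

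Concretely, set $F = f - I(f)$. Then $I(F) = 0$; $F$ obeys the decay of correlations \ref{corr} since $f\in\mathcal{L}$; and $\Vert F\Vert_{L^2}^2 = \Vert f\Vert_{L^2}^2 - I(f)^2$. Using $\int_{S_1}^{S_2} f(g_s x)\,ds = \int_{S_1}^{S_2} F(g_s x)\,ds + (S_2-S_1)I(f)$ and the analogous identity over $[S_2,T]$, I expand the integrand on the left of \ref{large-bound} into four terms and integrate over $X$. The term with both factors constant contributes $(S_2-S_1)(T-S_2)I(f)^2$, which equals $(S_2-S_1)(T-S_2)\Vert f\Vert_{L^1}^2$ since $f\geqslant 0$. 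Each of the two mixed terms is, after Fubini, a scalar multiple of $\int\big(\int_X F(g_u x)\,d\ell\big)\,du$, which vanishes because $g_u$ preserves $\ell$ and $I(F)=0$. Hence the left side of \ref{large-bound} equals $(S_2-S_1)(T-S_2)\Vert f\Vert_{L^1}^2$ plus the covariance term $\int_X\big(\int_{S_1}^{S_2}F(g_s x)\,ds\big)\big(\int_{S_2}^T F(g_t x)\,dt\big)\,d\ell$.

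To bound the covariance term I would bring $\int_X$ inside by Fubini and apply \ref{corr}, obtaining $\bigl\vert\int_X F(g_s x)F(g_t x)\,d\ell\bigr\vert\leqslant K\vert t-s\vert e^{-\rho\vert t-s\vert}\Vert F\Vert_{L^2}^2$; on the region $S_1\leqslant s\leqslant S_2\leqslant t\leqslant T$ one has $\vert t-s\vert = t-s$. Substituting $v = S_2-s$ and $u = t-S_2$ and enlarging the domain of integration to the quadrant $[0,\infty)^2$ gives
\[
K\Vert F\Vert_{L^2}^2\int_0^\infty\int_0^\infty (u+v)e^{-\rho(u+v)}\,du\,dv \;=\; K\Vert F\Vert_{L^2}^2\int_0^\infty w^2 e^{-\rho w}\,dw \;=\; \frac{2K}{\rho^3}\Vert F\Vert_{L^2}^2,
\]
which, absorbing powers of $\rho$ exactly as in the final display of Lemma~\ref{L2}, is at most $\tfrac{5K}{\rho}\bigl(\Vert f\Vert_{L^2}^2 - I(f)^2\bigr)$. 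Combining with the constant contribution yields \ref{large-bound}, the strict inequality coming from the proper enlargement of the domain of integration to $[0,\infty)^2$ (using $S_1<S_2<T$).

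I do not expect a genuine obstacle: this is a covariance estimate of precisely the type proved in Lemma~\ref{L2}, and the computation is essentially forced. The two points that deserve a moment's care are the vanishing of the mixed terms --- which uses nothing beyond measure-preservation of the flow together with $I(F)=0$ --- and the bookkeeping of the numerical constant through the elementary integral $\int_0^\infty w^2 e^{-\rho w}\,dw = 2/\rho^3$, equivalently evaluating the double integral of $K\vert t-s\vert e^{-\rho\vert t-s\vert}$ as in Lemma~\ref{L2}.
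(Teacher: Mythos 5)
Your proof is correct and follows essentially the same route as the paper: both decompose $f = I(f) + F$ with $F = f - I(f)$, apply the decay of correlations \ref{corr} to the mean-zero part, and evaluate the resulting double time integral via Fubini, your four-term expansion with vanishing mixed terms being just a repackaging of the paper's pointwise identity $\int_X f(g_s x) f(g_t x)\,d\ell - I(f)^2 = \int_X F(g_s x) F(g_t x)\,d\ell$. Your explicit evaluation of the correlation integral (giving $2K/\rho^3$, then absorbed into the stated $\tfrac{5K}{\rho}$ term) matches the paper's ``direct computation.''
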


\begin{proof}
For any function $f \in \mathcal{L}$, define $F$ by
\[
F(x) = f(x) - I(f).
\]
Then $I(F) =0$ and so it satisfies the decay of correlations \ref{corr}. Note that
\[
\int_X F(g_s x) F(g_t x) \,d\ell = \int_X f(g_s x) f(g_t x) \,d\ell - I(f)^2
\]
and $\Vert F \vert^2_{L^2} = \Vert f \Vert^2_{L^2} - I(f)^2$. It follows that $f$ satisfies
\[
\left\vert \int_X f(g_s x) f(g_t x) \,d\ell - I(f)^2 \right\vert \leqslant K\vert t-s\vert e^{-\rho \vert t- s\vert} \left( \Vert f \Vert^2_{L^2} - I(f)^2\right)
\]
which implies
\[
\left \vert \int_X f(g_s x)  f(g_tx) \,d\ell \right\vert \leqslant I(f)^2 + K\vert t-s\vert e^{-\rho \vert t- s\vert} \left( \Vert f \Vert^2_{L^2} - I(f)^2\right).
\]
For non-negative functions this implies
\begin{eqnarray*}
\int_X  \left( \int_{S_1}^{S_2} f(g_s x)\,ds \int_{S_2}^T  f(g_t x) \,dt \right)d\ell  &=& \int_{S_2}^T \int_{S_1}^{S_2} \left( \int_X f(g_s x) f(g_t x) \,d\ell \right) ds \,dt \\
&\leqslant& \int_{S_2}^T \int_{S_1}^{S_2} \left[ I(f)^2 + K\vert t-s\vert e^{-\rho \vert t- s\vert} \left( \Vert f \Vert^2_{L^2} - I(f)^2\right) \right] ds \,dt\\
&<& (S_2-S_1)(T-S_2) I(f)^2 + \frac{5K}{\rho} \left( \Vert f \Vert^2_{L^2} - I(f)^2\right)
\end{eqnarray*}
where the last inequality follows from a direct computation.
\end{proof} 

\section{Partial sums of excursions for non-uniform lattices in $SL(2, \R)$}

The Liouville measure $\ell$ on $T^1\H^2$ is invariant under the $SL(2, \R)$ action and descends to a flow-invariant measure on $T^1 X = G \backslash T^1 \H^2$. To get a probability measure $\ell$ on $T^1X$ we normalize by passing to
\[
d\ell \rightarrow \frac{1}{2\pi \vert \chi(X) \vert} d\ell.
\]
For notational simplicity we continue to call the probability measure $d\ell$. 

The geodesic flow on $T^1 X$ is given by the action of the diagonal subgroup of $SL(2, \R)$. By a classical result due to Hopf, the geodesic flow on $X = G \backslash \H^2$ is ergodic with respect to $\ell$. In fact, it is known to be exponentially mixing. As shown in \cite{Moo}, $SO(2, \R)$-invariant $L^2$-functions on $T^1 X$ satisfy the following decay of correlations for the diagonal flow: There exists constants $K> 0, \rho> 0$ such that for any pair $f_1, f_2$ of $SO(2, \R)$-invariant $L^2$-functions on $T^1X$ with $\int_{T^1X} f_1 d\ell = \int_{T^1X} f_2 d\ell = 0$ 
\begin{equation}\label{Ratner-decay}
\int_{T^1X} f_1(x) f_2(g_t x) \,d\ell \leqslant  K t e^{-\rho t} \Vert f_1 \Vert_{L^2} \Vert f_2 \Vert_{L^2}.
\end{equation}
See also  \cite[Theorem 2]{Rat}, \cite[Corollary 2.1]{Mat}. In particular, the lifts to $T^1X$ of $L^2$-functions on $X$ are by default $SO(2, \R)$ invariant. So the above decay of correlations applies to them. 

For $R \geqslant 1$, let $Y_R$ be the subset of the horoballs $\mathcal{H}$ consisting of those points which are at least distance $\log R$ from the boundary of the horoballs in the hyperbolic metric, i.e.
\[
Y_R := \bigcup_{H \in \mathcal{H}} \{ x \in H \ : \ d(x, \partial H) \geqslant \log R \}. 
\]
Let $X_R \subset X $ be the quotient of $G \backslash Y_R$. In particular, $X_1 = X_{\text{cusp}}$. We will write $T^1 Y$ for the restriction of the unit tangent bundle to any subset $Y \subset X$. An elementary calculation in hyperbolic space shows that 
\[
\ell(T^1X_R) = \frac{1}{R} \frac{\ell(T^1 X_{\text{cusp}})}{\ell(T^1 X)} = \frac{C_X}{R}
\] 
where to simplify notation, henceforth we will denote $\ell(T^1X_{\text{cusp}}) /  \ell(T^1 X)$ by $C_X$. Let $\chi_R$ be the characteristic function of $T^1 X_R$ and let $\phi_R = \chi_{R/2} - \chi_R$. Note that $\Vert \phi_R \Vert_{L^1} = C_X/ R$ and $\Vert \phi_R \Vert_{L^2} = \sqrt{C_X}/\sqrt{R}$. During an excursion of size at least $R$, a geodesic $\gamma$ must cross $T^1 X_{R/2}  \setminus T^1 X_R$ twice during a complete excursion and at least once during a partial excursion. By basic hyperbolic geometry, the geodesic spends time greater than $\log 2$ each time it crosses $T^1 X_{R/2} \setminus T^1 X_R$.  

The next proposition allows us to show that along Leb-almost every geodesic ray, for all times $T$ large enough there is at most a single \texttt{"}large\texttt{"} excursion. The proposition is a continuous time refinement of \cite[Lemma 2.1]{Dia-Vaa} and the proof uses Lemma \ref{quasi-ind}. 

\begin{proposition}\label{two-large}
For any $c > 1/2$ and for $\ell$-every $v \in T^1 X$ there exists $T(v)$ such that for all $T < T(v)$
\[
E(\gamma, H_i) \geqslant T (\log T)^c
\]
for at most a single $H_i \in H_{\gamma, T}$ and where $\gamma$ is the geodesic ray with $v(\gamma_0) = v$.
\end{proposition}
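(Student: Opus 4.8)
The plan is to run a Borel--Cantelli argument along the geometric sequence $T_k=2^k$. Fix $c>1/2$. For $T\in[T_k,T_{k+1})$ the threshold $T(\log T)^c$ is at least $R_k:=T_k(\log T_k)^c$, and every horoball $\gamma$ meets before time $T$ it has also met before $T_{k+1}$; so it suffices to prove $\sum_k\ell(B_k)<\infty$, where $B_k$ is the event that $\gamma$ makes at least two excursions of size $\geqslant R_k$ among the horoballs entered before time $T_{k+1}$. Recall that $\phi_{R_k}$ is the indicator of $T^1 X_{R_k/2}\setminus T^1 X_{R_k}$, an $SO(2,\mathbb{R})$--invariant set, so $\lVert\phi_{R_k}\rVert_{L^1}=\lVert\phi_{R_k}\rVert_{L^2}^2=C_X/R_k=:p_k$ and Lemma~\ref{quasi-ind} applies to $\phi_{R_k}$. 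The target estimate is
\[
\ell(B_k)\ \lesssim\ T_{k+1}^{2}\,p_k^{2}\ +\ \frac{T_{k+1}\,p_k}{\log R_k}\ \asymp\ \frac{1}{k^{2c}}+\frac{1}{k^{c+1}},
\]
and both series in $k$ converge precisely for $c>1/2$, the binding term being $T_{k+1}^2p_k^2\asymp k^{-2c}$.

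For the geometric input I will use exactly what is recorded just before the statement: an excursion of size $\geqslant R$ forces $\gamma$ to traverse the collar $C_R:=T^1 X_{R/2}\setminus T^1 X_R$ twice if the excursion is complete and at least once if it is the partial last one, spending time $>\log 2$ on each traversal; and reaching depth $\gtrsim\log R$ and returning costs time $\gtrsim\log R$, so no time interval of length $\lesssim\log R$ can accommodate the collar traversals of two such excursions. Hence $B_k\subseteq\mathcal F([0,T_{k+1}])$, where for a time interval $I$ and $v\in T^1X$ we put $v\in\mathcal F(I)$ iff there are three pairwise disjoint subintervals $J_1,J_2,J_3\subseteq I$ with $g_t v\in C_{R_k}$ for $t\in J_i$ and $|J_i|>\log 2$; moreover $\mathcal F(I)=\varnothing$ whenever $|I|\leqslant\tfrac12\log R_k$. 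By flow invariance of $\ell$, $\ell(\mathcal F(I))$ depends only on $|I|$; write it $P^*(|I|)$.

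The heart of the argument will be a dyadic recursion for $P^*$. A geodesic cannot linger in the thin collar $C_R$, so each $J_i$ has length at most an absolute constant $C$; thus a $J_i$ straddling the midpoint $S/2$ of $I=[0,S]$ lies inside $[S/2-C,S/2+C]$, and $\mathcal F([0,S])$ implies $\mathcal F([0,S/2+C])$, or $\mathcal F([S/2-C,S])$, or $\bigl\{\int_0^{S/2}\phi_{R_k}(g_t v)\,dt>\log 2\ \text{and}\ \int_{S/2}^{S}\phi_{R_k}(g_t v)\,dt>\log 2\bigr\}$. Markov's inequality together with Lemma~\ref{quasi-ind} (with $f=\phi_{R_k}$, $S_1=0$, $S_2=S/2$) bounds the probability of the last alternative by $(\log 2)^{-2}\bigl(\tfrac14 S^{2}p_k^{2}+\tfrac{5K}{\rho}p_k\bigr)$, so
\[
P^*(S)\ \leqslant\ 2\,P^*(S/2+C)\ +\ O\!\left(S^{2}p_k^{2}+p_k\right),
\]
the recursion terminating once $S\asymp\log R_k$. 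Unwinding it, the level-$j$ layer contributes $2^{j}$ terms of order $(T_{k+1}/2^{j})^{2}p_k^{2}+p_k$; the pieces $2^{j}(T_{k+1}/2^{j})^{2}p_k^{2}=T_{k+1}^{2}p_k^{2}/2^{j}$ sum geometrically to $O(T_{k+1}^{2}p_k^{2})$, while the $p_k$-errors sum to $O(2^{J}p_k)=O(T_{k+1}p_k/\log R_k)$ since there are only $J\asymp\log_2(T_{k+1}/\log R_k)$ layers and the deepest one dominates. This gives the target estimate, and Borel--Cantelli then finishes the argument for $\ell$-almost every $v$.

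The main obstacle is the geometric bookkeeping that makes the recursion close, not the probabilistic part (which is one application of Lemma~\ref{quasi-ind} per scale): one must make quantitative the claim that two excursions of size $\geqslant R$ cannot fit into time $O(\log R)$, so the recursion genuinely bottoms out, and one must handle collar-traversal intervals straddling a dividing point, which is absorbed by the $O(1)$ overlap of the two halves and is harmless because these intervals are short compared to the terminal scale $\log R$. It is worth stressing where divide-and-conquer is essential: summing Lemma~\ref{quasi-ind} over all $O(T)$ candidate split points would produce a hopeless main term of size $T^{3}p^{2}\to\infty$, whereas the recursion replaces it by $\sum_j 2^{j}(T/2^{j})^{2}p^{2}\asymp T^{2}p^{2}$, and it is exactly this $T^{2}p^{2}\asymp k^{-2c}$ that is summable if and only if $c>1/2$, matching the hypothesis.
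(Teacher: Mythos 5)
Your overall scheme (dyadic times $T_k=2^k$, Borel--Cantelli, the collar-crossing geometry, and Chebyshev/Markov via Lemma \ref{quasi-ind}) is the same toolkit as the paper's, but there is a genuine gap in the combinatorial event you recurse on. You define $\mathcal F(I)$ by the existence of three pairwise disjoint subintervals of $I$, each of length $>\log 2$, on which the orbit lies in the collar $C_{R}=T^1X_{R/2}\setminus T^1X_{R}$, and you need the base case ``$\mathcal F(I)=\varnothing$ for $|I|\leqslant \tfrac12\log R_k$''. That claim is false: a \emph{single} excursion whose maximal penetration stops just short of $X_R$ produces one maximal collar visit of duration up to $2\operatorname{arccosh}2\approx 2.63$, and since $2\operatorname{arccosh}2>3\log 2\approx 2.08$, that one visit already contains three disjoint subintervals of length $>\log 2$. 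So $\mathcal F(I)$ is nonempty even for $|I|$ of order a few units, the recursion does not bottom out at zero, and the unwound estimate acquires the extra term $2^{J}P^*(\text{base})\asymp (T_{k+1}/\log R_k)\cdot O(p_k\log R_k)\asymp T_{k+1}p_k\asymp k^{-c}$, which is not summable for $1/2<c\leqslant 1$ --- precisely the range the proposition must cover (the paper later uses $c<2/3$). The fix is to require the three intervals to lie in three \emph{distinct maximal} collar visits, each of duration $>\log 2$: the depth along a geodesic inside one horoball is unimodal, so a single excursion yields at most two maximal collar visits, three maximal visits force two distinct excursions, and passing from the collar of one horoball to the collar of another costs at least $2\log(R/2)$, which restores your base case; the inclusion $B_k\subseteq\mathcal F$ and your splitting trichotomy survive this change.

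For comparison, the paper avoids the recursion altogether: it cuts $[0,T_n]$ into $\sim T_n/\lambda$ windows of length $\lambda\approx\log R$, anchors a window at the start of the first large excursion (the descent to depth $\log R$ takes time $\geqslant\lambda$, so the second large excursion's collar crossing happens after $S_{k+2}+\lambda'$), and applies Lemma \ref{quasi-ind} once per window, giving the same $k^{-2c}+k^{-(c+1)}$ bound (see Proposition \ref{discrete-times}). So your claim that divide-and-conquer is essential is not accurate --- a union over $O(T/\log R)$ anchored windows, rather than over all $O(T)$ split points, already suffices --- though your recursion, once the event is corrected as above, is a valid alternative organization of the same estimate.
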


\noindent For the rest of the discussion, let $T_n = 2^n$. Proposition \ref{two-large} follows from the following proposition.

\begin{proposition}\label{discrete-times}
For any $c > 1/2$ and for $\ell$-every $v \in T^1 X$ there exists non-negative integer $n(v)$ such that for all $T_n > T_{n(v)}$ 
\[
E(\gamma, H_i) \geqslant T_{n-1} (\log T_{n-1})^c
\]
for at most single $H_i \in H_{\gamma, T_n}$ and where $\gamma$ is the geodesic ray with $v(\gamma_0) = v$. 
\end{proposition}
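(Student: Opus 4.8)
The plan is to reduce to a Borel–Cantelli argument along the dyadic times $T_n = 2^n$, using Lemma~\ref{quasi-ind} to control the expected number of pairs of large excursions. First I would replace the geometric event ``$\gamma$ has two excursions of size at least $T_{n-1}(\log T_{n-1})^c$ before time $T_n$'' by an analytic event about the flow. As observed just before the proposition, an excursion of size at least $R$ forces $\gamma$ to spend time at least $\log 2$ inside $T^1X_{R/2}\setminus T^1X_R$, i.e. $\int \phi_R(g_t v)\,dt \geqslant \log 2$ over the time interval of that excursion. Taking $R = R_n := T_{n-1}(\log T_{n-1})^c$, having \emph{two} disjoint such excursions in $[0,T_n]$ implies that there are disjoint subintervals $[S_1,S_2]$ and $[S_2,T_n]$ (we may always split at a point $S_2$ between the two excursions) on which $\int_{S_1}^{S_2}\phi_{R_n}(g_s v)\,ds \geqslant \log 2$ and $\int_{S_2}^{T_n}\phi_{R_n}(g_t v)\,dt \geqslant \log 2$. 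Hence the product of these two integrals is at least $(\log 2)^2$, so the bad set $B_n$ is contained in
\[
\left\{ v : \int_{S_1}^{S_2}\phi_{R_n}(g_s v)\,ds\int_{S_2}^{T_n}\phi_{R_n}(g_t v)\,dt \geqslant (\log 2)^2 \text{ for some } 0\leqslant S_1<S_2<T_n\right\}.
\]

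Next I would bound $\ell(B_n)$. There is a mild subtlety: the split point $S_2$ ranges over a continuum, so I would first pass to a finite net of split points (say integer values of $S_2$, or multiples of $\log 2$), of which there are $O(T_n)$, enlarging the integrals only by a bounded factor; alternatively one integrates the Lemma~\ref{quasi-ind} bound in $S_2$ as well. For each fixed $S_2$ apply Lemma~\ref{quasi-ind} with $f = \phi_{R_n}$, $S_1 = 0$, and $T = T_n$: since $\Vert \phi_{R_n}\Vert_{L^1} = C_X/R_n$ and $\Vert\phi_{R_n}\Vert^2_{L^2} - I(\phi_{R_n})^2 \leqslant C_X/R_n$, the right-hand side of \eqref{large-bound} is at most $T_n^2 (C_X/R_n)^2 + (5K/\rho)(C_X/R_n)$. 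Then Chebyshev (Markov, since the integrand is non-negative) gives
\[
\ell(B_n) \leqslant \frac{1}{(\log 2)^2}\Big( O(T_n)\cdot T_n^2 (C_X/R_n)^2 + O(T_n)\cdot (5K/\rho)(C_X/R_n)\Big).
\]
With $R_n = T_{n-1}(\log T_{n-1})^c = \tfrac12 T_n (c\log 2)^c\, n^c$, the first term is $O(T_n/n^{2c})$ — wait, that is not summable; so one must be more careful and integrate the quasi-independence bound over $S_2\in[0,T_n]$ \emph{before} applying Markov, which replaces the naive factor $T_n$ by an integration that yields $T_n^3(C_X/R_n)^2 \sim n^{-2c}$ for the ``$I(f)^2$'' part (this is the genuine content: two independent excursions of size $\sim T_n$ each have probability $\sim 1/(n^c)$, so the expected number of ordered pairs is $\sim 1/n^{2c}$, which is what we want) and $T_n^2 (5K/\rho)(C_X/R_n)\sim T_n/n^c$...

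The main obstacle — and the place where I expect to spend real effort — is precisely this bookkeeping: making the error term in Lemma~\ref{quasi-ind} (the $(5K/\rho)(\Vert f\Vert_{L^2}^2 - I(f)^2)$ piece, coming from the off-diagonal correlation decay) summable after integrating over all positions of the two excursion windows. The resolution is that this correlation term is integrated over a region of area $O(T_n\cdot T_n)$ only in the crude bound, but the exponential factor $e^{-\rho|t-s|}$ confines the genuine contribution to $|t-s| = O(1)$, so in effect it is integrated over area $O(T_n)$, giving a contribution $\sim T_n\cdot C_X/R_n \sim n^{-c}$ per unit — and summing over the $O(T_n/\log 2)$ net of split points is what must be arranged to still give a convergent series, which forces one to use $c>1/2$ (so that $\sum n^{-2c}<\infty$) rather than merely summability of $\sum n^{-c}$. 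Once $\sum_n \ell(B_n) < \infty$ is established, Borel–Cantelli gives that $\ell$-a.e.\ $v$ lies in only finitely many $B_n$, i.e.\ there is $n(v)$ with $v\notin B_n$ for $n > n(v)$; unwinding the definition of $B_n$, this says exactly that for $T_n > T_{n(v)}$ at most one $H_i \in \mathcal{H}(\gamma,T_n)$ has $E(\gamma,H_i) \geqslant T_{n-1}(\log T_{n-1})^c$, which is the statement of Proposition~\ref{discrete-times}. Finally, to recover Proposition~\ref{two-large} from it one interpolates: for $T_{n-1}< T \leqslant T_n$, an excursion of size $\geqslant T(\log T)^c$ before time $T$ is in particular one of size $\geqslant T_{n-1}(\log T_{n-1})^c$ before time $T_n$, so the dyadic statement applies after adjusting constants absorbed into the threshold.
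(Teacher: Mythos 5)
Your overall skeleton (dyadic times, the observation that an excursion of size at least $R$ forces $\int \phi_R \,dt \geqslant \log 2$ across the shell, then Lemma~\ref{quasi-ind} plus Chebyshev plus Borel--Cantelli) is the same as the paper's, but the quantitative bookkeeping does not close, and the two devices that make the paper's count work are missing. First, you take the first window to be $[0,S_2]$, so the product of window lengths in Lemma~\ref{quasi-ind} is of order $T_n^2$ and the main term per split point is $T_n^2 C_X^2/R_n^2 \sim n^{-2c}$; any union (or integration) over $S_2$ ranging over $[0,T_n]$ then produces an unsummable factor. Your proposed fix of integrating over $S_2$ before applying Markov does not repair this: $\int_0^{T_n} S_2(T_n-S_2)\,dS_2 \cdot (C_X/R_n)^2 \sim T_n^3/R_n^2 \sim T_n/n^{2c}$, not $n^{-2c}$ as you assert (and to run Markov on the integrated quantity you would also need a lower bound for the $S_2$-integral on the bad event, which you do not supply). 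The paper avoids this by exploiting the geometry of a large excursion: once the first excursion of size at least $R_n$ begins, its inbound shell crossing is completed within time about $\lambda = \log T_{n-1} + c\log\log T_{n-1}$, so one may take the first window to be the short interval $[S_k, S_{k+2}+\lambda']$ of length $O(\lambda)$ and the second to be $[S_{k+2}+\lambda', T_n]$; the product of lengths is then $O(\lambda T_n)$ rather than $O(T_n^2)$.

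Second, your net of split points has spacing $O(1)$, hence $O(T_n)$ points, whereas the paper places split points $S_j = j\lambda$ at spacing $\lambda$ --- legitimate because two excursions of size at least $R_n$ must begin at times separated by roughly $2\lambda$, so the index $k$ with $S_k \leqslant S < S_{k+1}$ for the start of the first big excursion captures every bad configuration. With only about $T_n/\lambda$ values of $k$, the main term sums to $O\left((\log T_{n-1})^{-2c}\right) = O(n^{-2c})$ and the correlation term $\tfrac{5K}{\rho} C_X/R_n \sim 1/(T_n n^{c})$ sums to $O(n^{-(1+c)})$, both summable precisely because $c > 1/2$. With your $O(T_n)$ split points the correlation term alone contributes $\sim n^{-c}$ per $n$, which diverges for every $c \leqslant 1$, so your accounting would at best reach the statement for $c>1$ (and the main term, as noted, does not converge at all in your setup). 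Your closing remark that $c>1/2$ enters through $\sum n^{-2c} < \infty$ identifies the right mechanism, but obtaining an $n^{-2c}$ bound requires exactly the short first window and the $\lambda$-spaced net; without them the argument has a genuine gap.
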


\begin{proof}
Let $\lambda = \log T_{n-1} + c \log \log T_{n-1}$. By basic hyperbolic geometry, the time a geodesic takes to go from the boundary of a horoball to $X_R$ where $R = T_{n-1} ( \log T_{n-1})^c$ is bounded between $\lambda$ and $\log \left( T_{n-1}(\log T_{n-1})^c + \sqrt{T_{n-1}^2 (\log T_{n-1})^{2c} - 1} \right) < \lambda + \log 2$. Similarly, let $\lambda' = \lambda - \log 2$. Then $\lambda'$ is a lower bound on the time it takes a geodesic to go from the boundary of a horoball to $X_{R/2}$ where $R = T_{n-1}( \log T_{n-1})^c$.
 
For positive integers $j \leqslant \lfloor T_n/ \lambda \rfloor$, let $S_j = j \lambda$. Let
\[
V_{n, k} = \left\{ v \in T^1X \text{ such that } \int_{S_k}^{S_{k+2} + \lambda'} \phi_R(\gamma(v_s)) \, ds > \log 2 \text{ and } \int_{S_{k+2}+ \lambda'}^{T_n} \phi_R(\gamma(v_t)) \, dt > \log 2 \right\}
\]
By applying Chebysheff's inequality to the estimate in Lemma \ref{quasi-ind} for the function $\phi_R$ we get
\begin{eqnarray*}
\ell(V_{n,k}) &\leqslant&  \frac{(S_{k+2} + \lambda' - S_K)(T_n - S_{k+2} - \lambda')}{(\log 2)^2}\frac{C_X^2}{R^2}\\ 
&+& \frac{1}{(\log 2)^2} \left(\frac{5KC_X}{\rho R} \left[ 1  - \frac{C_X}{R}\right] \right)\\
&<& \frac{6 \lambda C_X^2}{(\log 2)^2 T_{n-1}(\log T_{n-1})^{2c}} + \frac{5KC_X}{\rho (\log 2)^2 T_{n-1} (\log T_{n-1})^c}\\
&<& \frac{b_1}{T_{n-1} (\log T_{n-1})^{2c}} + \frac{b_2}{T_{n-1}(\log T_{n-1})^c}
\end{eqnarray*}
for some constants $b_1, b_2> 0$. In the second to last inequality we have used $T_n - S_k < T_n = 2 T_{n-1}$. 

Let $W$ be the set of  $v \in T^1X$ such that the corresponding geodesic $\gamma$ has two excursions $E(\gamma, H_i)$ and $E(\gamma, H_j)$ till time $T_n$ satisfying $E(\gamma, H_i) \geqslant T_{n-1}(\log T_{n-1})^c$ and $E(\gamma, H_j) \geqslant T_{n-1}(\log T_{n-1})^c$. Let $S$ be the time at which the first big excursion $E(\gamma, H_i)$ begins. Let $k$ be such that $S_k \leqslant S < S_{k+1}$. Because of our choice of $\lambda$ it follows that second big excursion $E(\gamma, H_j)$ cannot begin before $S_{k+2}$. Then because of the choice of $\lambda'$ the geodesic can not cross $T^1 X_{R/2} \setminus T^1 X_R$ during the second big excursion before $S_{k+2}+ \lambda'$. This means that $v \in V_{n,k}$. 
Let 
\[
V_n = \bigcup_{k=0}^{\lfloor T_n /\lambda \rfloor - 2} V_{n,k}
\]
Using the estimate on $\ell(V_{n,k})$ we get
\[
\ell(V_n) \leqslant  \sum_{k=1}^{\lfloor T_n/ \lambda \rfloor} \ell(V_{n,k}) < \frac{b_1 T_n}{T_{n-1} (\log T_{n-1})^{2c}} + \frac{b_2 T_n}{T_{n-1} (\log T_{n-1})^c} \leqslant \frac{2b_1}{(\log T_{n-1})^{2c}} + \frac{2b_2}{(\log T_{n-1})^c}.
\]
Since $c > 1/2$ it follows that 
\[
\sum_n \ell(V_n) < \infty
\]
Proposition \ref{discrete-times} then follows by the Borel-Cantelli lemma.
\end{proof}

\begin{proof}[Proof of Proposition \ref{two-large}]
Let $n$ be such that $T_{n-1} < T \leqslant T_n$. Since $E(\gamma, H) \geqslant T(\log T)^c$ implies $E(\gamma, H) > T_{n-1}(\log T_{n-1})^c$, Proposition \ref{two-large} follows from Proposition \ref{discrete-times}.
\end{proof}

\begin{remark}\label{tail}
It is important to observe that Proposition \ref{discrete-times} holds under the weaker condition that there is a constant $A> 1$ such that for $R$ sufficiently large
\[
\frac{1}{A} < \ell(T^1 X_R)  < A
\]
This observation will be of relevance for a similar proposition in the setting of quadratic differentials.
\end{remark}
Recall that $x_0$ is a base-point and $T^1_{x_0} X$ can be identified with $S^1$. 
\begin{corollary}\label{generic-S1}
For any $c > 1/2$ and Leb-almost every $r \in S^1$ there is $T(r)$ such that if $T > T(r)$ then 
\[
E(\gamma, H_i) \geqslant T(\log T)^c
\]
for at most single $H_i \in H_{\gamma, T}$ and where $\gamma$ is the geodesic ray from $x_0$ to $r$.
\end{corollary}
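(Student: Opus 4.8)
Call a unit vector $v=v(y,r)\in T^1X$, written as footpoint $y\in X$ and direction $r\in T^1_yX\cong S^1$, \emph{bad} if its forward ray $\gamma$ violates the conclusion, i.e.\ for arbitrarily large $T$ at least two horoballs $H\in\mathcal{H}(\gamma,T)$ satisfy $E(\gamma,H)\geqslant T(\log T)^c$; let $\mathcal{E}\subset T^1X$ be the set of bad vectors. Proposition~\ref{two-large} says $\ell(\mathcal{E})=0$. Since, by definition of $\ell$, disintegrating over the footpoint map $T^1X\to X$ gives conditional measures on the fibres $T^1_yX$ equal to Lebesgue under the visual identification with $S^1$, it follows that for $\ell$-almost every footpoint $y$ the slice $\{r: v(y,r)\in\mathcal{E}\}$ is Lebesgue-null. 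The corollary asserts this for the \emph{one} footpoint $x_0$, which does not follow formally, and the plan is to close the gap using that bad-ness is a forward-asymptotic condition.

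The key lemma I would prove is a one-sided monotonicity: \emph{if $v(y,r)$ is bad, then $v(y',r)$ is bad for every $y'$ lying deeper towards $r$ than $y$ by a Busemann amount $s:=\beta_r(y)-\beta_r(y')\geqslant 2$.} Lift to $\H^2$. Two rays to the same boundary point $r$ are strongly asymptotic; with $s\geqslant 2$ as above, past a bounded initial segment the ray from $y'$ to $r$ is the ray from $y$ to $r$ advanced by time $s$, traversing the same horoballs with entry-times shifted by $s$ and excursion lengths changed by an amount that tends to $0$. Since $\gamma$ has only finitely many excursions exceeding any fixed size before any fixed time, the witnessing times $T$ for the bad-ness of $v(y,r)$ occur at entry-times going to infinity, and they transfer to witnessing times $T'=T-s$ for $v(y',r)$: the threshold $T(\log T)^c$ drops by $\asymp s(\log T)^c$, which for large $T$ absorbs both the $o(1)$ change in lengths and the shift of entry-times, so two excursions beating $T(\log T)^c$ before time $T$ still beat $T'(\log T')^c$ before time $T'$. (Note that bad-ness does \emph{not} depend on $r$ alone: the threshold is read off the ray's own clock, which is reset when the footpoint moves, and a ray can be good from $x_0$ yet bad from a footpoint deeper towards $r$; only the monotone implication holds, and it is exactly what is needed.)

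Granting the lemma, suppose for contradiction $\mathrm{Leb}(\mathcal{R})>0$, where $\mathcal{R}:=\{r\in S^1: v(x_0,r)\in\mathcal{E}\}$. In $\H^2$, for $r$ in the lift $\widetilde{\mathcal{R}}$ let $\mathcal{O}_r$ be the horoball based at $r$ of points at Busemann depth $\geqslant 2$ relative to $\widetilde{x}_0$; by the lemma $\mathcal{O}_r\subseteq\{y: \text{the ray from }y\text{ to }r\text{ is bad}\}$, and $\mathcal{O}_r$ is a genuine horoball, hence of infinite hyperbolic area. Apply Tonelli to $\mathbf{1}_\Omega$ on $\H^2\times\partial\H^2$ with hyperbolic area $\times$ Lebesgue, where $\Omega=\{(y,r): r\in\widetilde{\mathcal{R}},\ y\in\mathcal{O}_r\}$. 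Under $(y,r)\leftrightarrow v(y,r)$ one has $\Omega\subseteq\widetilde{\mathcal{E}}$, the $G$-invariant lift of $\mathcal{E}$; the assertion ``for a.e.\ footpoint the bad-direction slice is Lebesgue-null'' is $G$-invariant, so from holding a.e.\ on a fundamental domain it holds a.e.\ on $\H^2$, whence the $y$-slices of $\Omega$ are Lebesgue-null for a.e.\ $y$ and the double integral is $0$. But the $r$-slices of $\Omega$ are the infinite-area horoballs $\mathcal{O}_r$ over the positive-measure set $\widetilde{\mathcal{R}}$, so the double integral is $+\infty$. This contradiction forces $\mathrm{Leb}(\mathcal{R})=0$; since the visual measure from $x_0$ and Lebesgue on $S^1$ are mutually absolutely continuous, this is the corollary.

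The main obstacle is the monotonicity lemma of the second paragraph. In log-law--type arguments the relevant event depends only on the endpoint, so a single application of the disintegration closes the matter; here the time-normalization inside the threshold $T(\log T)^c$ genuinely ties the event to the footpoint, and the proof must be organized around the facts that this dependence is monotone (bad-ness only propagates inward toward $r$) and that the excursion lengths move by a negligible $o(1)$ compared with the $(\log T)^c$-scale increments of the threshold. The remaining ingredients — $G$-invariance of the almost-everywhere statement, infinite area of horoballs, and equivalence of visual and Lebesgue measures — are standard.
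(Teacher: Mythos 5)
Your proposal is correct in outline, but it routes the key transfer step differently from the paper. Both arguments start from Proposition \ref{two-large} plus disintegration of $\ell$ over footpoints (so the statement holds for Leb-a.e.\ direction from a.e.\ base-point), and both rest on the same geometric ingredient: along two strongly asymptotic rays to the same endpoint $r$, excursions into a far horoball agree up to a uniformly bounded additive error and a bounded time-shift. The paper then finishes in two lines by picking a single a.e.-good auxiliary base-point $x_1$ and transferring the conclusion from $\gamma(x_1,r)$ to $\gamma(x_0,r)$ for a.e.\ $r$ via its crude bound $E(\gamma_0,H)-2ae^{-\tau}-2\leqslant E(\gamma_1,H)\leqslant E(\gamma_0,H)+2ae^{-\tau}+2$; you instead prove a one-sided propagation lemma (bad-ness pushes forward to footpoints deeper toward $r$) and derive a contradiction by Tonelli, pairing the infinite hyperbolic area of the horoballs $\mathcal{O}_r$ against the a.e.-null bad-direction slices. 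Your heavier route does buy something: moving the footpoint toward $r$ makes the threshold $T(\log T)^c$ drop by $\asymp s(\log T)^c$, so the same exponent $c$ absorbs the bounded errors, whereas in the paper's direction of comparison the witnessing time lengthens and one must absorb the bounded additive error and time-shift into the threshold (cleanest by comparing against a slightly smaller exponent $c'\in(1/2,c)$, which the paper leaves implicit). Two small points of care in your sketch: the asymptotic rays need not traverse literally the same horoballs, but any horoball carrying an excursion above the (large) threshold is entered deeply by both rays, which is all the lemma needs; and the witnessing excursions need not literally have entry times tending to infinity for a fixed pair, only that fixed completed excursions eventually drop below the threshold, which is what your transfer actually uses. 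With those caveats the argument goes through.
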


\begin{proof}
It follows from Proposition \ref{two-large} that the corollary is true for generic base-points. Suppose $\gamma_0$ and $\gamma_1$ are geodesic rays from distinct base-points $x_0$ and $x_1$ converging to the same point $r$ at infinity and let $H$ be a horoball. Let $\pi_H$ be the closest point projection to $H$ and let $a = d_{\partial H}(\pi_H x_0, \pi_H x_1)$.
Then we have the crude bound
\[
E(\gamma_0, H) - 2ae^{-\tau}-2 \leqslant E(\gamma_1, H) \leqslant E(\gamma, H) + 2ae^{-\tau}+2
\]
where $\tau$ is the minimum of $d(x_0, H)$ and $d(x_1, H)$. So for $H$ that is far enough the excursions by $\gamma_0$ and $\gamma_1$ are the same up to a uniform additive constant. This implies the corollary.
\end{proof}

Define the function $\psi: X \to \R$ by 
\[
\psi(x) = \left\{ \begin{array}{cc} 0 & \text{ if } x \in X_{\text{thick}}  \\ \left(\frac{2}{\pi} \right)e^{d(x, \partial X_{\text{thick}})} & \text{otherwise.} \end{array}
\right. 
\]
Let $\Psi : T^1 X \to \R$ be the lift of $\psi$ to $T^1X$. By default, the function $\Psi$ is $SO(2, \R)$-invariant.

Suppose a geodesic ray $\gamma$ has a complete excursion in a horoball $H$ entering and exiting $H$ at times $T_1$ and $T_2$ respectively then it follows from basic hyperbolic geometry that
\begin{equation}\label{complete-exc}
E(\gamma, H) - 2 <  \int_{T_1}^{T_2} \Psi(v(\gamma_t)) \,dt < E(\gamma, H) + 2
\end{equation}
i.e. for complete excursions the time integral of $\Psi$ is $E(\gamma, H)$ up to a uniform additive error. For the partial excursion, we have the crude estimate
\begin{equation}\label{partial-exc}
E(\gamma, H) - 2 < 2 \int_{T_1}^{T_2} \Psi(v(\gamma_t)) \,dt 
\end{equation}
which will prove sufficient for the purpose. 

Recall that for $\chi_R$ is the characteristic function of $T^1 X_R$. We define truncations of $\Psi$ by 
\[
\Psi_R(v)  = \Psi(v)( \chi_1(v) -  \chi_R(v) )
\]
where $\chi_1$ is the characteristic function of $T^1X_1 = T^1 X_{\text{cusp}}$.  Note that while $\Psi$ is not $L^1$ the truncations $\Psi_R$ satisfy 
\[
\Vert \Psi_R \Vert_{L^1}  = \frac{2C_X}{\pi} \log R \hskip 12pt \text{and} \hskip 10pt \Vert \Psi_R \Vert_{L^2} =  \frac{2\sqrt{C_X}}{\pi} \sqrt{R}. 
\]
Inequalities \ref{complete-exc} and \ref{partial-exc} show that partial sums of excursions (minus the largest excursion) i.e., the numerator in Theorem \ref{Esum} is estimated by the time integral of a suitable truncation of $\Psi$ up to an additive error that is linear in the number $N$ of excursions.  The next lemma shows that $N$ grows linearly in $T$.

\begin{lemma}\label{n-T-lemma}
There is a constant $q> 0$ such that for Leb-almost every $r \in S^1$
\[
\lim_{T \to \infty} \frac{N}{T} = \eta.
\]
\end{lemma}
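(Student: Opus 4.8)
The plan is to identify $N(T)$ with a return count for the geodesic flow to a suitable cross-section and then invoke the Birkhoff ergodic theorem together with Kac's formula. Let $\Sigma \subset T^1 X$ be the set of unit tangent vectors based on $\partial X_{\text{cusp}}$ (${}=\partial X_{\text{thick}}$) and pointing transversally into $X_{\text{cusp}}$, i.e.\ into one of the horoball neighborhoods. Since $\partial X_{\text{cusp}}$ is a finite disjoint union of horocircles, $T^1 X|_{\partial X_{\text{cusp}}}$ is a compact surface, so $\Sigma$ is an open subset of it and inherits a \emph{finite} transverse measure $\nu$ from the local product decomposition $d\ell = d\nu\, dt$ of Liouville measure along the flow direction; by the flow-box (Kac) decomposition one has $\int_\Sigma R\, d\nu = \ell(T^1 X) = 1$, where $R$ is the first-return time to $\Sigma$. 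The geometric input is that, along $\ell$-a.e.\ orbit, every excursion into a horoball begins with exactly one transversal inward crossing of $\partial X_{\text{cusp}}$, so that $N(T) = \#\{\, t \in [0,T] : g_t v \in \Sigma \,\} + O(1)$, the $O(1)$ absorbing the (possibly partial) last excursion.

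First I would check that $\Sigma$ is an a.e.\ cross-section and set up the return map. By Hopf ergodicity (already recalled in the introduction), $\ell$-a.e.\ orbit is recurrent to $X_{\text{thick}}$ and enters $X_{\text{cusp}}$ infinitely often; hence a.e.\ orbit crosses $\Sigma$ infinitely often, so the first-return map $\mathcal{R}\colon \Sigma \to \Sigma$ and return time $R$ are defined $\nu$-a.e., with $R > 0$ a.e.\ (after entering a horoball the orbit must traverse a positive-length chord of that horoball before it can re-cross $\Sigma$). Since the geodesic flow is, up to this normalization, the suspension of $(\Sigma, \nu, \mathcal{R})$ under the roof $R$, ergodicity of $g_t$ yields ergodicity of $\mathcal{R}$ with respect to $\nu$, and $R \in L^1(\nu)$ because $\int_\Sigma R\, d\nu = 1$.

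Then I would run Birkhoff: for $\nu$-a.e.\ $v \in \Sigma$,
\[
\frac{1}{n}\sum_{k=0}^{n-1} R(\mathcal{R}^k v) \ \longrightarrow\ \frac{1}{\nu(\Sigma)}\int_\Sigma R\, d\nu \ =\ \frac{1}{\nu(\Sigma)}\,,
\]
which is finite and positive since $0 < \nu(\Sigma) < \infty$. Thus the time of the $n$-th return is $S_n(v) = \sum_{k<n} R(\mathcal{R}^k v) \sim n/\nu(\Sigma)$, so the number of returns in $[0,T]$ is asymptotic to $\nu(\Sigma)\, T$. Transporting this from $\Sigma$ to $\ell$-a.e.\ $v \in T^1 X$ is routine (a.e.\ orbit reaches $\Sigma$ in finite time, and the flow-out of a $\nu$-conull subset of $\Sigma$ is $\ell$-conull by ergodicity), and Lebesgue-a.e.\ $r \in S^1$ gives an $\ell$-generic starting vector along $\gamma(x_0, r)$. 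Combined with the bookkeeping of the first paragraph, this gives $N(T)/T \to \eta := \nu(\Sigma) > 0$.

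The step I expect to be the main obstacle is making the bookkeeping $N(T) = \#\{\, t \in [0,T] : g_t v \in \Sigma \,\} + O(1)$ genuinely exact for $\ell$-a.e.\ $v$: one must discard the $\ell$-null set of orbits that ever graze $\partial X_{\text{cusp}}$ tangentially or pass through a tangency point of two boundary horocircles. This uses that the set of vectors based on $\partial X_{\text{cusp}}$ and tangent to it (together with the vectors through tangency points) is at most $1$-dimensional inside the $3$-manifold $T^1 X$, so its flow-out over $\R$ is at most $2$-dimensional and hence $\ell$-null; off this null set every meeting of the orbit with $\partial X_{\text{cusp}}$ is a transversal crossing, and inward crossings correspond bijectively to excursions into horoballs.
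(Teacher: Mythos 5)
Your argument is correct, and it reaches the lemma by a different mechanism than the paper. The paper works directly with the flow: it defines $\xi(v)=1/(T_f-T_b)$, the reciprocal of the duration of the excursion through the horoball containing the basepoint of $v$ (extended by zero on the thick part), notes that $\int_0^T\xi(v(\gamma_t))\,dt$ counts excursions up to a bounded error, proves $\xi\in L^1(T^1X)$ by an explicit hyperbolic estimate on the shells $T^1X_{R_{k-1}}\setminus T^1X_{R_k}$ with $R_k=1+2^{-k}$ (measure $\lesssim 2^{-k}$ against blow-up $\lesssim 2^{k/2}$), and then applies the Birkhoff theorem to the flow, so $\eta=\int\xi\,d\ell$. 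You instead pass to the cross-section $\Sigma$ of inward-pointing vectors over $\partial X_{\text{cusp}}$, use Kac's formula $\int_\Sigma R\,d\nu=\ell(T^1X)$ and Birkhoff for the return map, getting $\eta=\nu(\Sigma)$; these constants agree, since by the flow-box decomposition $\nu(\Sigma)=\int_{T^1X}\xi\,d\ell$. The trade-off is that your route replaces the paper's only technical computation ($\xi\in L^1$, i.e.\ control of near-tangential excursions where $\xi$ blows up) by the finiteness of the flux measure $\nu$, which is automatic because the flux density degenerates like $\cos\theta$ exactly at the tangential directions and $\partial X_{\text{cusp}}$ is a finite union of closed horocycles; in exchange you must set up the suspension structure (discreteness of crossings along compact orbit segments, a.e.\ coverage of $T^1X$ by the flow-out of $\Sigma$, ergodicity of the return map), which you do correctly. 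One shared soft spot: the passage from $\ell$-a.e.\ starting vector to the fixed basepoint $x_0$ and Lebesgue-a.e.\ $r\in S^1$ is asserted rather than proved in both your write-up and the paper (the paper's ``straightforward to see'' plays the same role as your ``routine''); strictly one should argue, as in the paper's Corollary \ref{generic-S1}, that changing to an asymptotic ray alters the excursion count by $o(T)$, which requires a small additional estimate on the number of shallow excursions.
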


\begin{proof}[Proof of Lemma \ref{n-T-lemma}]
The lemma follows from an approach similar to Schmidt's theorem in the theory of Diophantine approximation \cite[Theorem 1.1 with $k=1$]{Ath-Par-Tse}. Alternatively, we give a weaker but direct proof below.

For $v \in T^1(X \setminus X_{\text{thick}})$, let $\gamma$ be the geodesic such that $v(\gamma_0) = v$ i.e., the geodesic whose unit tangent vector at $t=0$ is $v$. For almost every $v$, the geodesic $\gamma$ intersects $\partial X_{\text{thick}}$ both in the forward and backward directions. Let $T_b< 0$ and $T_f> 0$ be the first instances of these intersections, i.e.the first instances backward and forward along $\gamma$ when it intersects the boundary of the horoball containing $\pi(v)$. Set 
\[
\xi(v) = \frac{1}{T_f - T_b}. 
\]
This defines a non-negative function $\xi: T^1(X \setminus X_{\text{thick}}) \to \R_{\geqslant 0}$ which we extend by setting it zero outside. It is straightforward to see that for almost every $r \in S^1$
\[
\int_0^T \xi(v(\gamma_t)) \,dt = N.
\]
We claim that $\xi \in L^1(T^1X)$. Let $R_k = 1 + (1/2^k)$ and consider $T^1 X_{R_k} \setminus T^1 X_{R_{k-1}}$. Since $\log (1 + (1/2^k)) = (1/2^k) - (1/2^{2k+1}) + \text{higher order terms}$ there exists a constant $b_3 > 0$ such that
\[
\ell \left(T^1 X_{R_{k-1}} \setminus T^1 X_{R_k} \right) < \frac{b_3}{2^k}
\]
for all $k$ large enough. By basic hyperbolic geometry, for any $v \in T^1 X_{R_k} \setminus T^1 X_{R_{k-1}}$
\begin{align*}
\xi(v) &\leqslant \frac{1}{2 \log \left(1+ (1/2^k) + \sqrt{(1+ (1/2^k))^2 - 1} \right)} \\
&< \frac{1}{2 \log \left( 1+ \sqrt{1/2^k} + (1/2^k)\right)  }\\
&< b_4 2^{k/2}
\end{align*}
for some constant $b_4 > 0$ and all $k$ large enough. This gives the bound
\[
\int_{T^1 X_{R_k} \setminus T^1 X_{R_{k-1}}} \xi \, d\ell < \frac{b_3 b_4}{2^{k/2}}.
\]
which proves the claim that $\xi \in L^1$. The lemma follows by applying the ergodic theorem to $\xi$.
\end{proof}

\begin{proof}[Proof of Theorem \ref{Esum}]
Consider the sequence of functions $\Psi_{2^k}$. While it is not necessary, for notational simplicity we set the constants $c> 1/2$ in Proposition \ref{two-large} and in Theorem \ref{ergodic-rate} to be equal. For $T$ such that $2^k \leqslant T < 2^{k+1}$ we set $n(T) = \lfloor k + c \log_2 k \rfloor$ where $\lfloor \hskip 3pt \rfloor$ is the greatest integer function. By Theorem \ref{ergodic-rate} applied to the sequence of functions $\Psi_{2^n}$, we have that for any $c> 1/2$ and $m>1$, $\ell$-almost every $v \in T^1 X$ satisfies
\[
\left\vert \int_0^T \Psi_{2^n} (v(\gamma_t)) \,dt - \frac{1}{m} T \Vert \Psi_{2^n} \Vert_{L^1} \right\vert \leqslant T^{1/2} (\log T)^{c_1}(\Vert \Psi_{2^n} \Vert^2_{L^2} - \Vert \Psi_{2^n} \Vert^2_{L^1})^{1/2}
\]
for all $T$ large enough depending on $v$. Let $r$ be the ratio $T(\log T)^c/ 2^n$. Then $1 < r < 3$. Substituting the $L^1$ and $L^2$ norms of $\Psi_{2^n}$ we see that the left hand side becomes:
\[
\left\vert \int_0^T \Psi_{2^n}(v(\gamma_t)) \,dt  - \frac{2C_X}{m\pi} T( \log T + c \log \log T - \log r) \right\vert  
\]
and the right hand side becomes
\[
T^{1/2}(\log T)^c \left( \frac{4C_X}{r \pi^2} T (\log T)^c - \frac{4C_X^2}{ \pi^2} (\log T + c \log \log T - \log r)^2\right)^{1/2} \leqslant \frac{2 \sqrt{C_X}}{\pi \sqrt{r}} T(\log T)^{3c/2}.
\]
Thus, we get
\begin{equation}\label{m-bound}
\left\vert \int_0^T \Psi_{2^n}(v(\gamma_t)) \,dt  - \frac{2C_X}{m\pi} T( \log T + c \log \log T - \log r) \right\vert  \leqslant \frac{2 \sqrt{C_X}}{\pi \sqrt{r}} T(\log T)^{3c/2}.
\end{equation}
We choose $c< 2/3$. Let $U_m$ be the full measure set in $T^1 X$ satisfying \ref{m-bound}. Consider the countable intersection
 \[
 U = \bigcap_{a \in \mathbb{N}}  U_{1+1/a}.
 \]
Then $U$ has full measure and for $v$ in $U$ the constraint $c < 2/3$ implies
\begin{equation}\label{main-lim}
\lim_{T \to \infty} \frac{1}{T \log T} \int_0^T \Psi_{2^n}(v(\gamma_t)) \,dt  = \frac{2C_X}{\pi}. 
\end{equation}
By the same reasoning as in the proof of Corollary \ref{generic-S1} the above limit is true for any base-point $x_0$ and Leb-almost every $r \in S^1$.

It remains to relate the time integral of $\Psi_{2^n}$ to partial sum of excursions. Enumerate the horoballs in $\mathcal{H}_{\gamma,T}$ as $H_1, \dots, H_N$ in the order of increasing time. In accordance with Corollary \ref{generic-S1}, suppose there is a single excursion
$E(\gamma, H_i) > T (\log T)^c$. Let $T_1 < T_2$ be the entry and exit times in $H_i$. Then notice that
\[
\int_{T_1}^{T_2} \Psi_{2^n} (v(\gamma_t)) \,dt \leqslant 2 T(\log T)^c.
\]
If there is partial excursion then let $T_3 < T$ be the time at which $\gamma$ enters $H_N$ and notice that
\[ 
\int_{T_3}^T \Psi_{2^n}(v(\gamma_t)) \,dt \leqslant 2 T(\log T)^c.
\]
Using the estimates above and also \ref{complete-exc} and \ref{partial-exc} we get
\begin{equation}\label{time-sum}
\begin{split}
\int_0^T \Psi_{2^n}(v(\gamma_t)) \,dt - 2N - 4T(\log T)^c &\leqslant  E(\gamma, T) - \max_{1\leqslant k \leqslant N} E(\gamma, H_k)\\
&\leqslant \int_0^T \Psi_{2^n}(v(\gamma_t)) \,dt + 2N + 2 T(\log T)^c.
\end{split}
\end{equation}
Theorem \ref{Esum} then follows from putting together \ref{main-lim}, \ref{time-sum} and Lemma \ref{n-T-lemma}. 
\end{proof}

\begin{proof}[Proof of Diamond-Vaaler theorem \ref{Diam-Vaal}]
For the modular surface $X = SL(2, \Z) \backslash \H^2$ the lift to $\H^2$ of the largest embedded cusp neighborhood in $X$ is the well-known Ford packing: in the upper half space model, we get horoballs resting at rational points, the Euclidean radius of the horoball with the point at infinity $p/q$ in reduced form being $1/2q^2$. 

With the cusp neighborhood fixed as above, Theorem \ref{Esum} for $X$ states that for any base-point and Leb-almost every $r \in S^1$
\begin{equation}\label{modular}
\lim_{T \to \infty} \frac{E(\gamma, T) - \max\limits_{k\leqslant N} E(\gamma, H_k)}{T \log T} = \frac{6}{\pi^2}.
\end{equation}
where $\gamma$ is the geodesic ray from some base-point $x_0$ to $r$. To derive the Diamond-Vaaler result (Theorem \ref{Diam-Vaal}) from the above limit, we relate excursions to continued fraction coefficients of $r$ and time $T$ along the geodesic to the number $n$ of continued fraction coefficients. 

In the upper half space model, for $r \in [0,1]$ irrational, let $[a_1, \dotsb, a_n, \dotsb ]$ be the infinite continued fraction expansion of $r$. Let $p_n/q_n = [a_1, \dots, a_n]$ be the $n$-th {\em convergent} of $r$ and let $H'_n$ be the horoball with $p_n/q_n$ as the point at infinity. We first consider vertical geodesics: for $r \in [0,1]$ let $\gamma'$ be the vertical geodesic ray from $(r, i) \in \H^2$ to $(r, 0) \in S^1 = \R \cup \infty$.

The ray $\gamma'$ has excursions in horoballs that are given by rational approximations of $r$ satisfying $\vert r - p/q \vert \leqslant 1/2q^2$. By a classical theorem for continued fractions, such rationals are a subset of the convergents $p_n/q_n$. If $a_n \geqslant 2$ then $a_n - 1 < E(\gamma', H'_n) < a_n+1$. However, if $a_n = 1$ then $\gamma'$ may or may not intersect $H'_n$ and we set $E(\gamma', H'_n) = 0$ if it does not. In any case, excursions of $\gamma'$ are equal to the coefficients up to a uniform additive error and hence we get 
\begin{equation}\label{coeff-sum}
\sum_{k = 1}^n E(\gamma', H'_k)- n \leqslant \sum_{k = 1}^n a_k  \leqslant   \sum_{k = 1}^n E(\gamma', H'_k) + n.
\end{equation}

By classical theory of continued fractions (\cite[Proposition 4.8.2(4)]{Bri-Stu}) for Leb-almost every $r$
\[
\lim_{n \to \infty} \frac{\log q_n}{n} = \frac{\pi^2}{12 \log 2}.
\]
Since $p_n/q_n \to r$, the same limit is true for $\log p_n/n$. Up to a transposition of columns, the matrix $Q_n$ with columns $[p_{n-1}, q_{n-1}]^t$ and $[p_n, q_n]^t$ is in $SL(2,\Z)$. The hyperbolic translation length of the matrix up to a uniform additive error is $2\log(\text{trace})$. By the above discussion $\log(\text{trace})$ is $\log q_n$ up to a uniform additive error. So let $2\log q_n = T_n$. Recall that $Q_n$ acts on the upper half plane by Mobius transformations. Geometrically $Q_n(r,i)$ is the orbit point closest to $\gamma'_{T_n}$ with the distance between them bounded above by the diameter of $X_{\text{thick}}$, i.e. uniformly bounded from above. This implies that along the sequence of times $2\log q_n = T_n$ the limit $n/T_n$ is $6 \log 2/\pi^2$. It should be pointed out that the number $N$ of horoballs that $\gamma'$ actually intersects till $T_n$ is less than or equal to $n$, and in fact $N/T_n$ will have a different limit as $T_n \to \infty$. 

The geodesic ray $\gamma$ from $x_0$ to $r$ and the vertical ray $\gamma'$ are asymptotic. Set 
\[
a = \max_{H \in \mathcal{H}} d_{\partial H}(\pi_H x_0, \pi_H (r,i))
\]
where $\pi_H$ is the closest point projection to $H$. Then we have the crude bound
\[
E(\gamma, H) - 2a e^{-\tau} - 2 < E(\gamma', H) < E(\gamma, H) + 2a e^{-\tau} + 2
\]
where $\tau$ is minimum of $d(x_0, H)$ and $d((r,i), H)$. Let $d$ be the distance between horocycles with $r$ at infinity that pass through $x_0$ and $(r,i)$ respectively. Then depending on the case we get the crude bound
\[
E(\gamma, T_n \pm d)-  2(a+1)(n+ 2\eta(T_n + d))   \leqslant \sum_{k \leqslant n } E(\gamma', H'_k) \leqslant E(\gamma, T_n \pm d) + 2(a+1)(n+ 2\eta(T_n + d)).
\] 
The estimate above implies that
\[
\lim_{n \to \infty} \frac{ \sum\limits_{k \leqslant n } E(\gamma', H'_k) - \max\limits_{k \leqslant n} E(\gamma', H'_k)}{T \log T} = \lim_{T_n \to \infty} \frac{E(\gamma, T_n \pm d) - \max\limits_{k\leqslant N} E(\gamma, H_k)}{T_n \log T_n} = \frac{6}{\pi^2}
\]
where the second equality follows from the fact that passing to $T_n$ instead $T_n \pm d$ in the numerator introduces an additive error that is at most $e^d$. 
Finally, note that $a_k - 1 \leqslant E(\gamma', H'_k) \leqslant a_k+1$ and so for Leb-almost every $r \in [0,1]$
\begin{eqnarray*}
\lim_{n \to \infty} \frac{\sum\limits_{k=1}^n a_k - \max\limits_{k \leqslant n} a_k}{n \log n} &=& \lim_{n \to \infty} \left( \frac{\sum\limits_{k \leqslant n} E(\gamma', H'_k) - \max\limits_{k \leqslant n}  E(\gamma', H'_k)}{T_n \log T_n}\right)\left( \frac{T_n \log T_n}{ n \log n }\right)\\
&=& \left(\frac{6}{\pi^2} \right)\left(\frac{\pi^2}{6 \log 2}\right) = \frac{1}{\log 2}
\end{eqnarray*} 
finishing the proof of Theorem \ref{Diam-Vaal}.

\end{proof}

\section{Partial sums along random \teichmuller geodesics in a stratum of quadratic differentials}

\subsection{Preliminaries from \teichmuller theory.}

Let $S$ be a hyperbolic surface of finite type, i.e. a surface of finite area which may have boundary components or punctures. We say such a surface $S$ is \emph{sporadic} if it is a sphere with at most
four punctures or boundary components, or a torus with at most one puncture or boundary component.  We shall primarily be interested in non-sporadic surfaces, as in the sporadic cases the Teichm\"uller
spaces are either trivial, or isometric to $\mathbb{H}^2$, which reduces us to the case of a non-uniform lattice in $SL(2, \R)$.

Let $S$ be a non-sporadic surface which has no boundary components, but may have punctures. The Teichm\"uller space $\T(S)$ is the space of marked conformal structures on $S$. Alternatively, by uniformization, it is the space of marked hyperbolic metrics on $S$. We shall consider
$\T(S)$ together with the Teichm\"uller metric
\[ d_{\mathcal{T}}(X, Y) = \tfrac{1}{2} \inf_f \log
K(f) \]
where the infimum is taken over all quasiconformal maps $f \colon X \to Y$, and $K(f)$ is the quasiconformal constant for the map $f$. The mapping class group $\textup{Mod}(S)$ acts by isometries on $\T(S)$. Let $\T(S)_\epsilon$ be the \emph{thin part} of Teichm\"uller space, i.e. all surfaces which contain a curve of hyperbolic length at most $\epsilon$. Let $\mathcal{M}(S)$ be the moduli space $\textup{Mod}(S) \backslash \T(S)$.  The thin part $\T(S)$ is $\textup{Mod}(S)$ invariant.  The thin part $\mathcal{M}(S)_\epsilon $ of moduli space is the quotient $\textup{Mod}(S) \backslash \T(S)_\epsilon$.

Let $\mathcal{Q}(X)$ be the unit area meromorphic quadratic differentials on $X$ with simple poles at all the punctures of $X$. If $(k_1, k_2, \dots, k_r)$ are the multiplicities of the zeros of a quadratic differential $q$ then $k_1+\dots + k_r  = 4g-4+2m$ where $m$ is the number of punctures of $X$. 
By contour integration, a quadratic differential $q$ defines a half-translation structure on $S$, i.e. it defines charts from $S$ to $\C$ with transition functions of the form $z \to \pm z+c$. The resulting flat metric has a cone singularity with cone angle $(k+2)\pi$ at a zero of $q$ order $k$ (or with $k=-1$ at a simple pole). A quadratic differential is unit area if the corresponding flat metric has unit area. The space $\mathcal{Q}$ of unit area quadratic differentials can be identified with the unit cotangent bundle to $\T(S)$ \cite{Hub-Mas}. We let $\pi : \mathcal{Q} \to \T(S)$ be the projection which sends a quadratic differential to its underlying Riemann surface. The space $\mathcal{Q}$ is stratified by the multiplicities of the zeros: we denote the strata with multiplicities $\alpha = (k_1, \dots, k_r)$ by $\mathcal{Q}(\alpha)$. For each stratum, the number of connected components is bounded. See \cite{Kon-Zor}, \cite{Boi-Lan}. 

In the flat metric defined by a quadratic differential $q$, a {\em saddle connection} is a geodesic segment that connects a pair of (same or distinct) singularities of $q$. The $\epsilon$-thin part, $\mathcal{Q}(\alpha)_\epsilon$ of $\mathcal{Q}(\alpha)$ is the subset of $q$ such that  $\ell^2_q(\beta) \leqslant \epsilon$ for some saddle connection $\beta$. The relationship between the thin parts of $\mathcal{Q}$ and $\T(S)$ is complicated: quadratic differentials on hyperbolic surfaces with short curves necessarily have short saddle connections but the converse need not be true.

For any $q \in \mathcal{Q}(\alpha)$ there is a canonical ramified double cover such that the lift of $q$ is square of a holomorphic 1-form $\omega$ and $(X, q)$ is a quotient of the double cover with respect to hyper-elliptic involution. Fixing a basis for the anti-invariant (with respect to hyper-elliptic involution) part of the homology of the double cover relative to the singularities, the holonomies (periods) given by integrating $\omega$ over the basis defines local co-ordinates on $\mathcal{Q}(\alpha)$. The natural volume form in these co-ordinates defines the Masur-Veech measure. Alternatively, it is known as the holonomy measure. We shall denote it by  $\mu_{\text{hol}}$. The measure $\mu_{\text{hol}}$ is $\textup{Mod}(S)$-invariant. So it descends to a measure on $\textup{Mod}(S) \backslash \mathcal{Q}(\alpha)$, the corresponding stratum of the moduli space of quadratic differentials. We continue to denote it by $\mathcal{Q}(\alpha)$. The $\mu_{\text{hol}}$-volume of $\mathcal{Q}(\alpha)$ is finite \cite{Mas1} \cite{Vee}. 

The affine action of $SL(2,\R)$ on the charts to $\C = \R^2$ preserves the glueing by half-translations. This defines an action of $SL(2,\R)$ on $\mathcal{Q}(\alpha)$. The orbits $SL(2, \R)(q)$ foliate $\mathcal{Q}(\alpha)$. The compact part $SO(2, \R)$ acts by rotations of $\R^2$. Hence, it preserves the conformal structure. The action of the diagonal subgroup defines the \teichmuller geodesic flow. It shrinks the leaves of the vertical foliation for $q$ and stretches the leaves of the horizontal foliation for $q$ by the same factor. It follows from the definition that $\mu_{\text{hol}}$ is $SL(2, \R)$-invariant. 

Since $SO(2,\R)$ preserves the conformal structure we get an isometrically embedded $\H^2 = SL(2, \R)/ SO(2, R)$ in $\T(S)$. This is called a Teichm\"uller disc and we will denote the \teichmuller disc determined by $q$ as $\D(q)$. The Teichm\"uller metric restricted to $\D(q)$ is isometric to the hyperbolic plane of constant curvature $-4$. 

The points $q'$ in $SL(2,\R)(q)$ where $\ell^2_{q'}(\beta) \leqslant \epsilon$ projects to a horoball in $\D(q)$. The point at infinity of the horoball is given by the direction in which $\beta$ is vertical. When two saddle connections $\beta_1$ and $\beta_2$ are parallel the proportion $[\ell_{q'}(\beta_1) :  \ell_{q'}(\beta_2) ]$ as a function of $q'$ is constant. Hence, the horoball is determined by the saddle connection with the shortest holonomy in a collection of parallel saddle connections. Typically, the intersection $\mathcal{Q}(\alpha)_\epsilon \cap \D(q)$ gives a complicated collection of horoballs in $\D(q)$: while every point in $\D(q)$ is contained in finitely many horoballs there need not be a uniform bound on this number. 

\subsection{$SL(2,\R)$ orbit closures and invariant measures.} Recently, Eskin and Mirzakhani \cite[Theorem 1.4]{Esk-Mir} showed that ergodic $SL(2,\R)$-invariant probability measures are of Lebesgue class and are supported on invariant complex submanifolds in $\mathcal{Q}(\alpha)$. These manifolds are affine in the sense that in holonomy co-ordinates they are given by linear equations. Going further, Eskin, Mirzakhani and Mohammadi in \cite[Theorem 2.1]{Esk-Mir-Moh} show that all $SL(2, \R)$ orbit closures are affine invariant submanifolds. See \cite[Section 1]{Esk-Mir} for more details. More recently, Filip \cite{Fil} shows that these submanifolds are in fact algebraic subvarieties. 

\subsection{Thin parts and regularity for invariant measures.} 
Let $\mu$ be an ergodic $SL(2,\R)$-invariant probability supported on an affine invariant submanifold $\mathcal{N} \subset \mathcal{Q}(\alpha)$. For $\epsilon > 0$, we define the $\epsilon$ thin part of $\mathcal{N}$ as follows:
\[
\mathcal{N}_{\epsilon} = \{ q \in \mathcal{N} \text{ such that } \ell_q^2(\beta) \leqslant \epsilon \text{ for some saddle connection } \beta\}.
\]
Saddle connections $\beta_1, \beta_2$ are $\mathcal{N}$-parallel if they are parallel for an open subset of quadratic differentials in $\mathcal{N}$. See Definition 4.6 in \cite{Wri}. 

\subsubsection*{Regularity:} For $\epsilon> 0, \kappa > 0$ small enough, let $\mathcal{N}_{\epsilon, \kappa}$ be the subset of $q \in \mathcal{N}$ such that there is a pair of saddle connections $\beta_1, \beta_2$ not $\mathcal{N}$-parallel such that $\ell^2_q(\beta_1) \leqslant \epsilon$ and $\ell^2_q(\beta_2) \leqslant \kappa$. The measure $\mu$  is said to be {\em regular} if there exists a constant $m_1 > 0$ such that 
\begin{equation}\label{regularity}
\mu(\mathcal{N}_{\epsilon, \kappa} ) \leqslant m_1 \epsilon \kappa
\end{equation}
Masur and Smillie \cite[Section 10, Claim (7)]{Mas-Smi} show that the holonomy measure $\mu_{\text{hol}}$ is regular. Avila, Matheus and Yoccoz \cite[Theorem 1.2]{Avi-Mat-Yoc} prove a weaker regularity for any $SL(2, \R)$-invariant measure. 

\subsection{$SL(2,\R)$-invariant loci, Siegel-Veech transform and volume asymptotic.}
For $q \in \mathcal{N}$, let $V(q) \subset \R^2 \setminus \{(0,0)\}$ be an assignment of a non-empty subset of holonomies of saddle connections on $q$. We require that the assignment varies linearly under $SL(2,\R)$ action, i.e. $V(g q) = g V(q)$ for all $g \in SL(2,\R)$. As observed in \cite{Esk-Mas}, such an assignment satisfies conditions $(B)$ and $C_\mu$ mentioned in their paper for any $SL(2,\R)$-invariant measure $\mu$. Such an assignment $V$ will be called a $SL(2,\R)$-invariant locus.

Let $f$ be a smooth function on $\R^2$ with compact support. The Siegel-Veech transform associated to a $SL(2,\R)$-invariant locus $V$ is defined as
\[
\widehat{f}(q) = \sum_{v \in V(q)} f(v)
\]
Veech showed that $f \in L^1(\mathcal{N}, \mu)$ and proved the Siegel-Veech formula
\begin{equation}\label{Siegel-Veech}
\int_{\mathcal{N}} \widehat{f} \, d\mu = c(V,\mu) \int_{\R^2} f \, dx \, dy
\end{equation}
where the constant $c(V,\mu)$ does not depend on $f$. The constant $c(V,\mu)$ is called the Siegel-Veech constant associated to $V$ and $\mu$. We assume that the assignment $V$ is such that $c(V,\mu) >0$. 

For $R \geqslant 1$, the $\epsilon/R$-thin part of $\mathcal{N}$ corresponding to $V$ is the set of $q$ with some saddle connection with holonomy in $V$ has $q$-length squared less than $\epsilon/R$.  We denote the set by $\mathcal{N}(V)_{\epsilon/R}$. 

Let $f_{\epsilon/R}$ be the characteristic function of the ball $B((0,0), \sqrt{\epsilon/R})$ centered at the origin and radius $\sqrt{\epsilon/R}$. While $f_{\epsilon/R}$ is not smooth the Siegel-Veech formula extends to such characteristic functions. The regularity condition \ref{regularity} and the Siegel-Veech formula \ref{Siegel-Veech} applied to $f_{\epsilon/R}$ can be used to prove the volume asymptotic
\begin{equation}\label{n-vol}
\lim_{R \to \infty} \frac{\mu(\mathcal{N}(V)_{\epsilon/R})}{\pi \epsilon/R} = c(V, \mu).
\end{equation}
See \cite[Section 7]{Esk-Mas-Zor} for the main ideas.

\subsection{Exponential mixing of \teichmuller flow:} 
It is known that the \teichmuller flow is exponentially mixing. For the Masur-Veech measure, the decay of correlations \ref{Ratner-decay} for $SO(2,\R)$-invariant $L^2$-functions is due to Avila-Gou\"ezel-Yoccoz \cite{Avi-Gou-Yoc} for holomorphic 1-forms and due to Avila-Resende \cite{Avi-Res} for quadratic differentials. For general $SL(2,\R)$-invariant measures this is due to Avila-Gou\"ezel \cite{Avi-Gou}. Since the functions we consider are pullbacks from \teichmuller discs they are $SO(2, \R)$-invariant. Hence, the decay of correlations applies to them.

\section{Proofs of Theorem \ref{saddles-V} }
\subsection*{The simplest case:} We first prove Theorem \ref{saddles-V} in the simplest case when $V(q)$ is the set of holonomies of {\em all} saddle connections on $q$. This allows us to convey the key ideas while getting into less subtleties. We denote the corresponding Siegel-Veech constant simply as $c(\mu)$. 

Let $q \in \mathcal{N}_\epsilon$. Consider short saddle connections in $q$. If some saddle connections are parallel we choose the one with the smallest holonomy among them. Suppose that lengths of these short saddle connections are given by $\ell^2_q(\beta_1) = \epsilon/R_1, \ell^2_q(\beta_2) = \epsilon/R_2, \dots, \ell^2_q(\beta_k) = \epsilon/R_k$ with $R_1 \geqslant R_2 \geqslant \dots \geqslant R_k \geqslant 1$.  We define 
\[
\Psi(q) = \frac{2}{\pi} R_1.
\]
Next we define
\[
\widehat{\Psi}(q) = \frac{2}{\pi}\left( R_1 + R_2 + \dots+ R_k \right).
\]
Obviously $\widehat{\Psi}(q) \geqslant \Psi(q)$ for all $q$. At first glance, the function $\Psi$ above is similar to the function $\Psi$ defined in the context of non-uniform lattices. However, here the sum over all excursions between successive entry and exit times $T_1 < T_2$ in $\mathcal{N}_\epsilon$ can satisfy
\[
\sum\limits_{H:  \gamma[T_1, T_2] \cap H \neq \emptyset} E(\gamma, H) \gg  \int_{T^1}^{T_2} \Psi(v(\gamma_t)) \, dt.
\]
This discrepancy is rectified by using the larger function $\widehat{\Psi}$. The key point is to estimate the difference in the $L^1$ and $L^2$ norms of the truncations of $\Psi$ and $\widehat{\Psi}$ in terms of the depth in $\mathcal{N}_\epsilon$ of the truncations. This will enable us to show that the above discrepancy does not happen too often.

Let $\chi_R$ denote the characteristic function of $\mathcal{N}_{\epsilon/R}$ and define the truncation
\[
\Psi_R = (\chi_1 - \chi_R) \Psi.
\]

\begin{lemma}\label{phi-norms}
\[
\lim_{R \to \infty} \frac{\Vert \Psi_R \Vert_{L^1}}{\log R}  =  2 \epsilon c(\mu) \hskip 8pt, \hskip 8pt \lim_{R \to \infty} \frac{\Vert \Psi_R  \Vert_{L^2}}{\sqrt{R}} = 2 \frac{\sqrt{\epsilon c(\mu)}}{\sqrt{\pi}}.
\]
\end{lemma}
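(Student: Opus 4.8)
The plan is to convert both norms into layer-cake integrals of the distribution function of the reciprocal-length quantity $R_1$ and then feed in the volume asymptotic \ref{n-vol}.

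First I would record the geometric content of $\Psi_R$. For $q\in\mathcal{N}_\epsilon$ let $\ell^2_q(\beta_{\min})$ denote the square-length of a shortest saddle connection, so that $R_1(q)=\epsilon/\ell^2_q(\beta_{\min})\geqslant 1$ and $\Psi(q)=(2/\pi)R_1(q)$; extend by $R_1(q)=0$ for $q\notin\mathcal{N}_\epsilon$. Then for each $t\geqslant 1$ the superlevel set $\{R_1>t\}$ coincides with $\mathcal{N}_{\epsilon/t}$ up to a set that is $\mu$-null for Lebesgue-almost every $t$ (for fixed $q$ the exceptional parameter $t=R_1(q)$ is a single point, so the exceptional set is null in the product $\mu\times dt$ by Tonelli), and $t\mapsto\mu(\mathcal{N}_{\epsilon/t})$ is non-increasing. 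Since $\chi_1-\chi_R$ is the indicator function of $\{1\leqslant R_1<R\}$, we have $\Psi_R=(2/\pi)\,R_1\,\mathbf{1}_{\{1\leqslant R_1<R\}}$.

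For the $L^1$ norm I would apply the layer-cake formula to the non-negative function $R_1\,\mathbf{1}_{\{1\leqslant R_1<R\}}$, obtaining
\[
\Vert\Psi_R\Vert_{L^1}=\frac{2}{\pi}\int_0^\infty\mu\!\left(R_1\,\mathbf{1}_{\{1\leqslant R_1<R\}}>t\right)dt=\frac{2}{\pi}\left(\int_1^R\mu(\mathcal{N}_{\epsilon/t})\,dt-(R-1)\,\mu(\mathcal{N}_{\epsilon/R})+O(1)\right),
\]
where the $O(1)$ absorbs the contribution of $t\in[0,1)$, which is at most $\mu(\mathcal{N}_\epsilon)\leqslant 1$. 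By \ref{n-vol} we have $\mu(\mathcal{N}_{\epsilon/R})=O(1/R)$, so the middle term is $O(1)$, and it remains to show $\int_1^R\mu(\mathcal{N}_{\epsilon/t})\,dt\sim\pi\epsilon c(\mu)\log R$. This is the one step that needs care, because \ref{n-vol} is only a limit rather than a uniform bound: given $\delta>0$, choose $R_0$ so that $(1-\delta)\pi\epsilon c(\mu)/t\leqslant\mu(\mathcal{N}_{\epsilon/t})\leqslant(1+\delta)\pi\epsilon c(\mu)/t$ for all $t\geqslant R_0$, split $\int_1^R=\int_1^{R_0}+\int_{R_0}^R$, bound the first piece by the constant $R_0$, squeeze the second between $(1\mp\delta)\pi\epsilon c(\mu)\log(R/R_0)$, divide by $\log R$, and let $R\to\infty$ and then $\delta\to 0$. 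This gives $\Vert\Psi_R\Vert_{L^1}/\log R\to 2\epsilon c(\mu)$.

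For the $L^2$ norm the same device applied to $R_1^2\,\mathbf{1}_{\{1\leqslant R_1<R\}}$, followed by the substitution $s=u^2$, gives
\[
\Vert\Psi_R\Vert_{L^2}^2=\frac{4}{\pi^2}\int_0^\infty\mu\!\left(R_1^2\,\mathbf{1}_{\{1\leqslant R_1<R\}}>s\right)ds=\frac{4}{\pi^2}\left(\int_1^R 2u\,\mu(\mathcal{N}_{\epsilon/u})\,du-R^2\mu(\mathcal{N}_{\epsilon/R})+O(1)\right).
\]
Since $u\,\mu(\mathcal{N}_{\epsilon/u})\to\pi\epsilon c(\mu)$ by \ref{n-vol}, the same two-regime estimate gives $\int_1^R 2u\,\mu(\mathcal{N}_{\epsilon/u})\,du\sim 2\pi\epsilon c(\mu)R$, while $R^2\mu(\mathcal{N}_{\epsilon/R})\sim\pi\epsilon c(\mu)R$; hence $\Vert\Psi_R\Vert_{L^2}^2\sim(4/\pi^2)\pi\epsilon c(\mu)R=(4\epsilon c(\mu)/\pi)R$, i.e. $\Vert\Psi_R\Vert_{L^2}/\sqrt{R}\to 2\sqrt{\epsilon c(\mu)}/\sqrt{\pi}$. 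The only genuine obstacle is the non-uniformity of \ref{n-vol}, which the two-regime splitting $\int_1^{R_0}+\int_{R_0}^R$ together with the monotonicity of $\mu(\mathcal{N}_{\epsilon/t})$ resolves; the remaining manipulations (the layer-cake identity, Tonelli, and the change of variables $s=u^2$) are elementary.
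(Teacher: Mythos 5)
Your argument is correct, and it takes a somewhat different route than the paper while relying on the same single input, the volume asymptotic \ref{n-vol}. You compute $\Vert\Psi_R\Vert_{L^1}$ and $\Vert\Psi_R\Vert_{L^2}^2$ exactly via the layer-cake formula for the distribution function of $R_1(q)=\epsilon/\ell^2_q(\beta)$ ($\beta$ the shortest saddle connection), reducing everything to $\int_1^R\mu(\mathcal{N}_{\epsilon/t})\,dt$ and $\int_1^R 2u\,\mu(\mathcal{N}_{\epsilon/u})\,du$ minus the boundary terms $(R-1)\mu(\mathcal{N}_{\epsilon/R})$ and $R^2\mu(\mathcal{N}_{\epsilon/R})$, and you handle the non-uniformity of \ref{n-vol} by splitting the integral at a threshold $R_0$; your treatment of the level-set issue ($\{R_1>t\}$ versus $\{R_1\geqslant t\}=\mathcal{N}_{\epsilon/t}$) via Tonelli is also fine, and the resulting constants $2\epsilon c(\mu)$ and $4\epsilon c(\mu)/\pi$ match. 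The paper instead discretizes: it sums lower and upper bounds for $\Psi_R$ over the shells $\mathcal{N}_{\epsilon/2^{(k-1)r}}\setminus\mathcal{N}_{\epsilon/2^{kr}}$ and recovers the sharp constants only after a double limit, first $R\to\infty$ and then the mesh $r\to 0$, with the auxiliary constant $A$ chosen close to $1$ as in \ref{A-condition}. For this lemma your continuous version is cleaner, avoiding that bookkeeping and giving the asymptotics in one pass; what the paper's dyadic decomposition buys is reusability, since the same shell-by-shell estimates are perturbed later (in Lemma \ref{phihat-norms} and in the general case for a locus $V$, where the regularity bound \ref{regularity} and the Eskin--Masur bound \ref{number-saddles} are inserted shell by shell), so the discrete setup is recycled there, whereas your layer-cake computation would need to be adapted for those steps.
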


\begin{proof}
It follows from \ref{n-vol} that for any $A > 1$ there is $R_0$ such that for all $R > R_0$ 
\[
\frac{1}{A} \frac{\pi \epsilon c(\mu)}{R}  < \mu (\mathcal{N}_{\epsilon/R}) < A \frac{\pi \epsilon c(\mu)}{R}.
\]
Fix $r > 0$ and for any positive integer $k$ consider $\mathcal{N}_{\epsilon/2^{(k-1)r}} \setminus \mathcal{N}_{\epsilon/2^{kr}}$. If $k$ is large enough so that $2^{(k-1)r} > R_0$ then the measure of the above set satisfies
\[
\frac{\pi \epsilon c(\mu)}{2^{kr}} \left( \frac{2^r-A^2}{A}\right)  < \mu \left(\mathcal{N}_{\epsilon/2^{(k-1)r}} \setminus \mathcal{N}_{\epsilon/2^{kr}} \right) < \frac{\pi \epsilon c(\mu)}{2^{kr}} \left( \frac{2^rA^2-1}{A}\right).
\]
Given $r$, we choose $A$ close to $1$ such that 
\begin{equation}\label{A-condition}
\frac{2^r -1}{2^r}  < \frac{2^r-A^2}{A} < \frac{2^rA^2 -1}{A} < 2^r(2^r -1). 
\end{equation}
Let $n$ be the largest integer such that $2^{nr} \leqslant R$. The $L^1$-norm of $\Psi_R$ can be estimated by
\begin{align*}
\frac{2}{\pi} \sum_{k=1}^n 2^{(k-1)r} \mu \left(\mathcal{N}_{\epsilon/2^{(k-1)r}} \setminus \mathcal{N}_{\epsilon/2^{kr}}  \right) &< \Vert \Psi_R \Vert_{L^1}\\
&< \frac{2}{\pi} \sum_{k=1}^{n+1} 2^{kr} \mu \left( \mathcal{N}_{\epsilon/2^{(k-1)r}} \setminus \mathcal{N}_{\epsilon/2^{kr}} \right).
\end{align*}
Let $n_0$ be the smallest integer such that $2^{n_0 r} \geqslant R_0$. We assume that $R \gg R_0$. The summation in the lower bound on the left satisfies
\begin{align*}
\sum_{k=0}^n 2^{(k-1)r} \mu \left(\mathcal{N}_{\epsilon/2^{(k-1)r}} \setminus \mathcal{N}_{\epsilon/2^{kr}}  \right) > &\sum_{k=1}^{n_0-1} 2^{(k-1)r} \mu \left(\mathcal{N}_{\epsilon/2^{(k-1)r}} \setminus \mathcal{N}_{\epsilon/2^{kr}}  \right) +\\  &\sum_{k=n_0}^n 2^{(k-1)r} \frac{\pi \epsilon c(\mu)}{2^{kr}} \left(\frac{2^r -1}{2^r} \right).
\end{align*}
The right hand side of the above inequality simplifies to
\[
\sum_{k=1}^{n_0-1} 2^{(k-1)r} \mu \left(\mathcal{N}_{\epsilon/2^{(k-1)r}} \setminus \mathcal{N}_{\epsilon/2^{kr}}  \right) + \frac{(n-n_0) \pi \epsilon c(\mu)}{2^r} \left( \frac{2^r-1}{2^r} \right).
\]
As $R$ becomes large, the second term dominates and since $(n-n_0)/ \log R \to  1/ r \log 2$ the above expression simplifies to
\[
\frac{2}{2^{2r} \log 2}\left(\frac{2^r - 1}{r} \right) \epsilon c(\mu) < \lim_{R \to \infty} \frac{\Vert \Psi_R \Vert_{L^1}}{ \log R} 
\]
which as $r \to 0$ implies
\[
2 \epsilon c(\mu) \leqslant \lim_{R \to \infty} \frac{\Vert \Psi_R \Vert_{L^1}}{\log R}.
\]
Similarly the summation for the upper bound on $\Vert \Psi_R \Vert_{L^1}$ gives
\[
\frac{2 }{2^r \log 2} \left( \frac{2^r - 1}{r} \right) \epsilon c(\mu) > \lim_{R \to \infty} \frac{\Vert \Psi_R \Vert_{L^1}}{\log R}
\]
which as $r \to 0$ implies 
\[
2 \epsilon c(\mu) \geqslant  \lim_{R \to \infty} \frac{\Vert \Psi_R \Vert_{L^1}}{\log R}
\]
proving the lemma for the $L^1$-norm. 

In a similar way, the square of the $L^2$-norm of $\Psi_R$ can be estimated by
\begin{align*}
\frac{4}{\pi^2} \sum_{k=1}^n 2^{2(k-1)r} \mu \left( \mathcal{N}_{\epsilon/2^{(k-1)r}} \setminus \mathcal{N}_{\epsilon/2^{kr}} \right) &< \Vert \Psi_R \Vert^2_{L^2} \\  &< \frac{4}{\pi^2} \sum_{k=1}^{n+1} 2^{2kr} \mu \left( \mathcal{N}_{\epsilon/2^{(k-1)r}} \setminus \mathcal{N}_{\epsilon/2^{kr}} \right). 
\end{align*}
The summation in the lower bound on the left satisfies
\begin{align*}
\sum_{k=1}^n 2^{2(k-1)r} \mu \left( \mathcal{N}_{\epsilon/2^{(k-1)r}} \setminus \mathcal{N}_{\epsilon/2^{kr}} \right) > &\sum_{k=1}^{n_0-1} 2^{2(k-1)r} \mu \left( \mathcal{N}_{\epsilon/2^{(k-1)r}} \setminus \mathcal{N}_{\epsilon/2^{kr}} \right) + \\ &\sum_{k=n_0}^n 2^{2(k-1)r} \frac{\pi \epsilon c(\mu)}{2^{kr}} \left( \frac{2^r- 1}{2^r} \right). 
\end{align*}
The right hand side of the inequality above is equal to
\[
\sum_{k=1}^{n_0-1} 2^{2(k-1)r} \mu \left( \mathcal{N}_{\epsilon/2^{(k-1)r}} \setminus \mathcal{N}_{\epsilon/2^{kr}} \right) + \left(\frac{2^{(n+1)r} - 2^{n_0 r}}{2^r - 1} \right)\frac{\pi \epsilon c(\mu)}{2^{2r}} \left(\frac{2^r - 1}{2^r} \right) 
\]
which is greater than
\[
\sum_{k=1}^{n_0-1} 2^{2(k-1)r} \mu \left( \mathcal{N}_{\epsilon/2^{(k-1)r}} \setminus \mathcal{N}_{\epsilon/2^{kr}} \right) + \frac{\pi \epsilon c(\mu) R}{2^{3r}} - \frac{2^{n_0 r} \pi \epsilon c(\mu)}{2^{3r}}.
\]
As $R$ becomes large the term containing $R$ dominates and letting $r \to 0$ we get  the lower bound
\[
\frac{2\sqrt{\epsilon c(\mu)}}{\sqrt{\pi}} \leqslant \lim_{R \to \infty} \frac{\Vert \Psi_R \Vert_{L^2}}{\sqrt{R}}. 
\]
Similarly the summation in the upper bound satisfies
\begin{align*}
\sum_{k=1}^{n+1} 2^{kr} \mu \left( \mathcal{N}_{\epsilon/2^{(k-1)r}} \setminus \mathcal{N}_{\epsilon/2^{kr}} \right) <  &\sum_{k=1}^{n_0-1}  2^{2kr} \mu \left( \mathcal{N}_{\epsilon/2^{(k-1)r}} \setminus \mathcal{N}_{\epsilon/2^{kr}} \right) + \\ &\sum_{k= n_0}^{n+1}  2^{2kr} \frac{\pi \epsilon c(\mu)}{2^{(k-1)r}} \left( 2^r (2^r -1) \right) \\
< & \sum_{k=1}^{n_0-1}  2^{2kr} \mu \left( \mathcal{N}_{\epsilon/2^{(k-1)r}} \setminus \mathcal{N}_{\epsilon/2^{kr}} \right)  + 2^{4r}R \pi \epsilon c(\mu). 
\end{align*}
As $R$ becomes large the term containing $R$ dominates and letting $r \to 0$ we get the upper bound
\[
\frac{2\sqrt{\epsilon c(\mu)}}{\sqrt{\pi}} \geqslant \lim_{R \to \infty} \frac{\Vert \Psi_R \Vert_{L^2}}{\sqrt{R}}.
\]
finishing the proof for the $L^2$-norm.
\end{proof}
The next lemma will need the regularity \ref{regularity} for $\mu$ and the following theorem of Eskin and Masur: For any stratum $\mathcal{Q}(\alpha)$ and any $0 < \delta < 1$ there exists constants $m_2(\alpha, \delta) > 0$ such that the number of saddle connections shorter than $\kappa$ is bounded above by
\begin{equation}\label{number-saddles}
s(q) \leqslant m_2 \left(\frac{\kappa}{\ell_q(\beta)} \right)^{1+ \delta}
\end{equation}
where $\beta$ is the shortest saddle connection for $q$. It should be noted that while Eskin and Masur state the theorem for strata of holomorphic 1-forms it is true for strata of quadratic differentials by passing to the canonical double cover. 

Fix the constant $\delta$ in the Eskin-Masur theorem and choose $a> 1$ such that $a < 2/(1+ \delta)$. Let $\mathcal{N}' \subset \mathcal{N}_{\epsilon/R} \setminus \mathcal{N}_{\epsilon/R^a}$ be the subset of quadratic differentials such that apart from the shortest saddle connection, all other short saddle connections satisfy $\ell^2_q(\beta_i) \geqslant \epsilon/R$. We define the function $\Psi' : \mathcal{N}' \to \R_{\geqslant 0}$ by
\[
\Psi'(q) = \frac{2}{\pi} ( R_2 + R_3 + \dots + R_j).
\]
We define a slightly more complicated truncation
\[
\widehat{\Psi}_R = \widehat{\Psi}(\chi_1 - \chi_R) + \Psi'.
\]
As we shall see in the proof of Theorem \ref{saddles-V} in this particular case, the extra term $\Psi'$ allows us to keep track of excursions that are concurrent with the largest excursion if it exceeds $T(\log T)^c$.

\begin{lemma}\label{phihat-norms}
There exists a constant $B> 0$ such that for $R$ large enough
\[
\Vert \widehat{\Psi}_R \Vert_{L^1} \leqslant \Vert \Psi_R \Vert_{L^1} + B.
\]
The $L^2$-norms satisfy
\[
\Vert \widehat{\Psi}_R \Vert_{L^2} \leqslant \Vert \Psi_R \Vert_{L^2} + o(\sqrt{R}).
\]
\end{lemma}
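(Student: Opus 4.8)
The plan is to write $\widehat{\Psi}_R = \Psi_R + G_R$ with $G_R := \widehat{\Psi}_R - \Psi_R$, and to check first that $G_R$ is a non-negative function supported on the \emph{disjoint} union of the set $\{q : 1 \leqslant R_1(q) < R\}$ (the support of $\Psi_R$, where $\widehat{\Psi} \geqslant \Psi$ pointwise and $\Psi'$ vanishes) and the set $\mathcal{N}'$ (where $\Psi_R$ vanishes, since there $R \leqslant R_1 < R^a$, and $\widehat{\Psi}_R = \Psi'$). On each of these two pieces $G_R$ equals $\tfrac{2}{\pi}\Sigma$, where $\Sigma(q) := R_2(q) + R_3(q) + \cdots$ is the sum of $\epsilon/\ell^2_q(\beta_i)$ over all short saddle-connection classes of $q$ other than the shortest (and $\Sigma(q)=0$ if $q$ has a single short class). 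Because $\widehat{\Psi}_R = \Psi_R + G_R$ with all three functions non-negative, $\Vert \widehat{\Psi}_R\Vert_{L^1} = \Vert\Psi_R\Vert_{L^1} + \Vert G_R\Vert_{L^1}$, so the first assertion reduces to proving $\Vert G_R\Vert_{L^1}$ is bounded independently of $R$; and by the triangle inequality $\Vert\widehat{\Psi}_R\Vert_{L^2} \leqslant \Vert\Psi_R\Vert_{L^2} + \Vert G_R\Vert_{L^2}$, so the second reduces to $\Vert G_R\Vert_{L^2}^2 = o(R)$. Since the two pieces are disjoint, $\Vert G_R\Vert_{L^p}^p$ is just the sum of $(\tfrac{2}{\pi})^p\int\Sigma^p\,d\mu$ over the two pieces, so everything comes down to estimating $\int\Sigma\,d\mu$ and $\int\Sigma^2\,d\mu$ over $\{1\leqslant R_1<R\}$ and over $\mathcal{N}'$.

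The main input for these integral estimates is a pointwise bound on $\Sigma$. Slicing the short saddle connections of $q$ into dyadic length scales and applying the Eskin--Masur count \eqref{number-saddles} at each scale (the geometric sum over scales is summable precisely because $\delta<1$), I expect to obtain
\[
\Sigma(q) \ \leqslant \ C_0\, R_1^{(1+\delta)/2}\, R_2^{(1-\delta)/2}
\]
whenever $q$ has at least two short classes, with $C_0$ depending only on $\alpha$ and $\delta$. With this in hand I would decompose each of the two domains dyadically according to the scales $2^{\ell}\leqslant R_1 < 2^{\ell+1}$ and $2^{p}\leqslant R_2 < 2^{p+1}$. On the cell indexed by $(\ell,p)$ the differential has the two non-$\mathcal{N}$-parallel saddle connections $\beta_1,\beta_2$ with $\ell^2_q(\beta_1)\leqslant\epsilon 2^{-\ell}$ and $\ell^2_q(\beta_2)\leqslant\epsilon 2^{-p}$, so the regularity hypothesis \eqref{regularity} bounds the measure of the cell by $m_1\epsilon^2 2^{-\ell-p}$, while the displayed pointwise bound controls $\Sigma$ and $\Sigma^2$ on the cell by fixed powers of $2^{\ell}2^{p}$. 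Each resulting double geometric series is dominated by its extreme term: over $\{1\leqslant R_1<R\}$, where $0\leqslant p\leqslant\ell\leqslant\log_2 R$, this gives $\int\Sigma\,d\mu=O(1)$ and $\int\Sigma^2\,d\mu=O(R^{\delta})$; over $\mathcal{N}'$, where $\log_2 R\leqslant\ell<a\log_2 R$ and $0\leqslant p\leqslant\log_2 R$, it gives $\int_{\mathcal{N}'}\Sigma\,d\mu=O(R^{-(1-\delta)/2})$ and $\int_{\mathcal{N}'}\Sigma^2\,d\mu=O(R^{a\delta})$. Hence $\Vert G_R\Vert_{L^1}$ is bounded and $\Vert G_R\Vert_{L^2}^2=O(R^{\delta}+R^{a\delta})=o(R)$, using $\delta<1$ and $a\delta < 2\delta/(1+\delta)<1$, which is exactly what the choice $a<2/(1+\delta)$ is for.

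The step I expect to be the real obstacle is the $L^2$ bound. The naive estimate $\Sigma\leqslant(k-1)R_1$ together with the Eskin--Masur bound $k-1\lesssim R_1^{(1+\delta)/2}$ only gives $\Sigma^2\lesssim R_1^{3+\delta}$, and integrating this over the region $R_1\approx R$ (which has measure $\approx 1/R$) produces a term of size $\approx R^{2+\delta}$, far larger than $o(R)$. Making the argument work requires the two refinements used above: first, the sharper pointwise bound on $\Sigma$, obtained by summing the Eskin--Masur count dyadically in length, which trades one factor of $R_1$ for $R_2^{(1-\delta)/2}$; and second, the genuinely two-variable form of regularity \eqref{regularity}, which contributes the extra $2^{-p}$ to the measure of each cell. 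Balancing these two gains against each other, and against the constraints $\delta<1$ and $a<2/(1+\delta)$, is what forces all the geometric series to converge and the final exponents $\delta$ and $a\delta$ to land below $1$; checking the boundary scales ($R_2\approx 1$ and the two endpoints of the $R_1$-range) and the elementary hyperbolic and flat-geometry estimates along the way is routine.
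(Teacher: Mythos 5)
Your plan is correct and follows essentially the same route as the paper: decompose dyadically in the scales of the two shortest non-$\mathcal{N}$-parallel saddle connections, bound each cell's measure by the regularity hypothesis \ref{regularity}, bound the excess $\widehat{\Psi}_R-\Psi_R$ pointwise by a dyadic-in-length summation of the Eskin--Masur count (yielding exactly the paper's bound $m_3\,2^{k(1+\delta)/2}2^{j(1-\delta)/2}$), and sum the resulting geometric series over both the support of $\Psi_R$ and $\mathcal{N}'$, with the choice $a<2/(1+\delta)$ ensuring the $L^2$ excess is $o(R)$. The only deviation is cosmetic: you use the triangle inequality for the $L^2$ bound where the paper expands the square and also estimates the cross term $2\Psi_R(\widehat{\Psi}_R-\Psi_R)$, which produces an additional $R^{a(1+\delta)/2}=o(R)$ term but changes nothing essential.
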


\begin{proof}

Consider the subset $\mathcal{N}_{k,j}$ of $\mathcal{N}_{\epsilon/ 2^{k-1} }\setminus \mathcal{N}_{\epsilon/2^k}$ of those $q$ such that the length of the second shortest saddle connection $\beta_2$ (not $\mathcal{N}$-parallel to $\beta_1$) satisfies $\epsilon/2^{j-1} \geqslant \ell_q^2(\beta_2) > \epsilon/2^j$ where $j \leqslant k$. By regularity \ref{regularity}, 
\[
\mu (\mathcal{N}_{k,j}) \leqslant \frac{m_1 \epsilon^2}{2^{k-1}2^{j-1}}.
\]
For a quadratic differential $q$ in $\mathcal{N}_{k,j}$, using the bound \ref{number-saddles} on the number of short saddles we get the pointwise bound 
\[
\widehat{\Psi}_R(q) - \Psi_R(q) < \sum_{i=1}^j 2^{i}m_2 \left(\frac{2^k}{2^{i-1}}\right)^{(1+ \delta)/2} < 2m_2 2^{k(1+ \delta)/2} \sum_{i=1}^j 2^{(i-1)(1- \delta)/2} < m_3 2^{k(1+ \delta)/2} 2^{j(1-\delta)/2}
\]
for some constant $m_3 > 0$. Thus 
\[
\int_{\mathcal{N}_{k,j}} (\widehat{\Psi}_R- \Psi_R) \, d\mu < \left(m_3 2^{k(1+ \delta)/2} 2^{j(1-\delta)/2}\right) \left( \frac{m_1 \epsilon^2}{2^{k-1} 2^{j-1}}\right) = \frac{4m_1 m_3 \epsilon^2}{2^{k(1- \delta)/2} 2^{j(1+ \delta)/2}}.
\]
Summing over $j=1$ to $k$ (when $2^{k-1} > R$ it suffices to sum till the smallest number $k'$ such that $2^{k'} > R$ in which case the sum would be even smaller) we get 
\[
\int_{\mathcal{N}_{\epsilon/2^{k-1}} \setminus \mathcal{N}_{\epsilon/2^k}} (\widehat{\Psi}_R- \Psi_R) \, d\mu < \frac{m_4}{2^{k(1-\delta)/2}}
\]
for some constant $m_4 > 0$. Let $n_a$ be the smallest integer such that $2^{n_a} \geqslant R^a$. The bound for the integral established above implies that 
\[
\Vert \widehat{\Psi}_R \Vert_{L^1} - \Vert \Psi_R \Vert_{L^1} < \sum_{k=1}^{n_a} \frac{m_4}{2^{k(1-\delta)/2}}.
\]
The sum on the right hand side is bounded from above independent of $n_a$ which proves the lemma for $L^1$ norms.

The same pointwise bound above implies
\begin{align*}
\int_{\mathcal{N}_{k,j}} \widehat{\Psi}^2_R \, d\mu &< \int_{\mathcal{N}_{k,j}} \Psi^2_R \, d\mu +  2m_3 2^{k(1+ \delta)/2} 2^{j(1- \delta)/2}\int_{\mathcal{N}_{k,j}} \Psi_R \, d\mu + m^2_3 2^{k(1+ \delta)}2^{j(1- \delta)}\mu (\mathcal{N}_{k,j})\\
&< \int_{\mathcal{N}_{k,j}} \Psi^2_R \, d\mu + 2m_3 2^{k(1+ \delta)/2} 2^{j(1- \delta)/2}\left( \frac{2^k m_1 \epsilon^2}{2^{k-1}2^{j-1}}\right) + m_3^2 2^{k(1+ \delta)} 2^{j(1- \delta)}\frac{m_1\epsilon^2}{2^{k-1}2^{j-1}}\\
&=  \int_{\mathcal{N}_{k,j}} \Psi^2_R \, d\mu + \frac{1}{2^{j(1+ \delta)/2}} 8m_1m_3 \epsilon^2 2^{k(1+ \delta)/2} + \frac{1}{2^{j\delta}} 4m_1 m^2_3 \epsilon^2 2^{k\delta}
\end{align*}
Summing over $j = 1$ to $k$ we get
\[
\int_{\mathcal{N}_{\epsilon/2^{k-1}} \setminus \mathcal{N}_{\epsilon/2^k}} \widehat{\Psi}^2_R \, d\mu < \int_{\mathcal{N}_{\epsilon/2^{k-1}} \setminus \mathcal{N}_{\epsilon/2^k}} \Psi^2_R \, d\mu  + m_5 2^{k(1+ \delta)/2} + m_6 2^{k\delta}.
\]
for some constants $m_5, m_6 > 0$. Summing over $k=1$ to $n_a$ we get
\[
\Vert \widehat{\Psi}_R \Vert^2_{L^2} < \Vert \Psi_R  \Vert^2_{L^2} + \frac{m_7}{2} 2^{n_a(1+ \delta)/2} + \frac{m_8}{2} 2^{n_a  \delta} < \Vert \Psi_R \Vert^2_{L^2} + m_7 R^{a(1+ \delta)/2} + m_8 R^{a \delta} 
\]
for some constants $m_7, m_8 > 0$. Recall that we had chosen $a> 1$ to satisfy $a(1+ \delta) < 2$ which implies $a \delta < a(1+ \delta)/2 < 1$. Thus, the corresponding terms on the right hand side are $o(R)$ from which the lemma follows for $L^2$-norms.
\end{proof}

We will justify the choice of the cutoff $R^a$ for truncation in the following lemma which is a continuous time version of the analog of Borel-Bernstein theorem \cite[Theorem 30]{Khi}, \cite{Ath-Par-Tse}. 

\begin{lemma}\label{lemma:a}
For any $a>1$ and $\mu$-almost every $q \in \mathcal{N}$ there is $T_0$ depending on $q$ such that for all $T > T_0$, all excursions $E(\gamma, H)$ till time $T$ satisfy 
\[
E(\gamma, H) < T^a.
\]
\end{lemma}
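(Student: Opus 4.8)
The plan is to run a Borel--Cantelli argument along the times $T_n = 2^n$, in the spirit of the classical Borel--Bernstein theorem and of Proposition \ref{discrete-times}. Let $A_n \subset \mathcal{N}$ be the set of $q$ for which the \teichmuller geodesic $\gamma$ determined by $q$ has at least one excursion $E(\gamma, H) \geqslant T_{n-1}^a$ among the horoballs of $\D(q)$ met before time $T_n$. I first record that this reduces the lemma to showing $\sum_n \mu(A_n) < \infty$: if $q$ lies in only finitely many $A_n$ and $T_{n-1} \leqslant T < T_n$ with $n$ large, then every excursion recorded up to time $T$ is dominated by the corresponding excursion recorded up to time $T_n$ --- the foot-point projection of $\gamma$ to the boundary horocycle of a horoball moves monotonically, so a partial excursion at time $T$ can only grow (or close up) by time $T_n$ --- and is therefore $< T_{n-1}^a \leqslant T^a$.

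The key input will be the hyperbolic--geometry dictionary in the \teichmuller disc $\D(q)$, which is a copy of the hyperbolic plane (of curvature $-4$) in which a sufficiently short saddle connection $\beta$ cuts out a horoball whose depth at a point $q'$ equals, up to a bounded error, $\tfrac12\log\bigl(\epsilon/\ell^2_{q'}(\beta)\bigr)$. As in the non-uniform lattice case (the analogues of \ref{complete-exc} and \ref{partial-exc}), the $d_{\text{thick}}$-size of an excursion into such a horoball is comparable to $e^{2D}$, where $D$ is the depth attained, so an excursion of size at least $R$ forces $\ell^2_{q'}(\beta) \lesssim \epsilon/R$ at the deepest point visited. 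I would use this to produce fixed constants $c_0 > 1$ and $\delta_0 > 0$, depending only on the normalisations, such that: if $\gamma$ has an excursion $E(\gamma, H) \geqslant R$ before time $T$, then $g_t q \in \mathcal{N}_{\epsilon/(R/c_0)}$ for some $t \leqslant T$, and moreover $\gamma$ spends total time at least $\delta_0$ inside $\mathcal{N}_{\epsilon/(R/c_0)}$ before time $T$, since it must traverse the shell of definite hyperbolic width between the horocycles at depths $D$ and $D-\log 2$. Only this one implication is needed --- a large excursion forces a deep visit to the thin part of the single saddle connection defining that horoball --- so the absence of a uniform bound on the number of horoballs through a point of $\D(q)$ is irrelevant here.

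I then apply this with $R = T_{n-1}^a$ and $T = T_n$, and set $R_n = T_{n-1}^a/c_0$. For every $q \in A_n$,
\[
\delta_0 \leqslant \int_0^{T_n} \chi_{R_n}(g_t q) \, dt,
\]
where $\chi_{R_n}$ is the characteristic function of $\mathcal{N}_{\epsilon/R_n}$. Integrating over $\mathcal{N}$ and using $g_t$-invariance of $\mu$,
\[
\mu(A_n) \leqslant \frac{1}{\delta_0} \int_{\mathcal{N}} \int_0^{T_n} \chi_{R_n}(g_t q) \, dt \, d\mu = \frac{T_n}{\delta_0} \, \mu\bigl(\mathcal{N}_{\epsilon/R_n}\bigr).
\]
By the volume asymptotic \ref{n-vol} for the locus of all saddle connections (equivalently, by the bound $\mu(\mathcal{N}_{\epsilon/R}) \leqslant C/R$ coming from the regularity \ref{regularity} and the Siegel--Veech formula \ref{Siegel-Veech}), one has $\mu(\mathcal{N}_{\epsilon/R_n}) \leqslant C'/R_n$ for $n$ large, whence
\[
\mu(A_n) \leqslant \frac{C' T_n}{\delta_0 R_n} = C'' \, 2^{n} 2^{-a(n-1)} = O\bigl(2^{(1-a)n}\bigr).
\]
Since $a > 1$ this series converges, so by Borel--Cantelli $\mu$-almost every $q$ lies in only finitely many $A_n$, which by the first paragraph proves the lemma.

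The hard part will be the dictionary of the second paragraph: pinning down, with constants uniform over $\mathcal{N}$, the comparison between the combinatorial excursion $E(\gamma, H)$ and the depth of $g_t q$ in the moduli-space thin part $\mathcal{N}_{\epsilon/R}$, and in particular treating a partial excursion, where only a crude one-sided estimate of the type \ref{partial-exc} is available. Everything else is a first-moment computation, and no quasi-independence of excursions is needed for this lemma.
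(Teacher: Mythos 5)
Your argument is correct, but it takes a genuinely different and more elementary route than the paper. The paper proves Lemma \ref{lemma:a} by applying its effective ergodic theorem (Theorem \ref{ergodic-rate}, hence exponential mixing via Lemma \ref{L2}) to the shell functions $\phi_{2^n}$ (characteristic functions of $\mathcal{N}_{\epsilon/2^{n-1}}\setminus\mathcal{N}_{\epsilon/2^n}$) with the cutoff $n(T)\approx a k$ for $2^k\leqslant T<2^{k+1}$: the resulting bound shows the total time spent in that deep shell up to time $T$ is eventually smaller than $\log 2$, while an excursion exceeding $T^a$ would force at least $\log 2$ of time there, a contradiction. You instead run a pure first-moment Borel--Cantelli argument along dyadic times: the same geometric fact (a large excursion forces a definite amount of time, $\delta_0$, in the correspondingly deep thin part, with the partial-excursion case handled by a crude one-sided bound as in \ref{partial-exc}) combined with Fubini, $g_t$-invariance of $\mu$, and the Siegel--Veech upper bound $\mu(\mathcal{N}_{\epsilon/R})=O(1/R)$ (which needs only \ref{Siegel-Veech}, not the full asymptotic \ref{n-vol} or regularity) gives $\mu(A_n)=O(2^{(1-a)n})$, summable since $a>1$; your reduction from continuous $T$ to the events $A_n$ is also sound. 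What your approach buys is independence from mixing and from Theorem \ref{ergodic-rate} altogether --- it works for any flow-invariant probability measure satisfying the linear volume decay --- at the cost of giving only the qualitative conclusion, whereas the paper's route yields the quantitative statement that the time spent at depth $\gtrsim T^a$ tends to zero, which it gets essentially for free since Theorem \ref{ergodic-rate} is already in place for the main theorems. Both proofs hinge on the same hyperbolic-geometry dictionary in the Teichm\"uller disc relating excursion size to the length of the defining saddle connection, and you are right that neither needs quasi-independence of excursions here.
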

\begin{proof}
Choose $A$ close to 1 such that estimate \ref{A-condition} is satisfied with $r=1$. Let $\phi_R$ be the characteristic function of $\mathcal{N}_{\epsilon/(R/2)} \setminus \mathcal{N}_{\epsilon/R}$.The choice of $A$ implies that if $k$ is large enough then
\[
\Vert \phi_{R} \Vert_{L^1} < \frac{2\pi \epsilon c(\mu)}{R} \hskip 8pt \text{and hence} \hskip 8pt \Vert \phi_{R} \Vert^2_{L^2}  < \frac{2\pi \epsilon c(\mu)}{R}.
\]
For $T$ satisfying $2^k \leqslant T < 2^{k+1}$ set $n(T) = a(k-1)$. Fixing $c> 1/2, m>1$, Theorem \ref{Esum} implies that $\mu$-almost every $q$ satisfies 
the bound
\begin{align*}
\int_0^T \phi_{2^n}(v(\gamma_t)) \, dt  &\leqslant mT \Vert \phi_{2^n} \Vert_{L^1} + T^{1/2}(\log T)^c \left( \Vert \phi_{2^n} \Vert^2_{L^2} - \Vert \phi_{2^n} \Vert^2_{L^1} \right)^{1/2} \\
&\leqslant m T \frac{2\pi \epsilon c(\mu)}{2^{a(k-1)}} + T^{1/2}(\log T)^c \left( \frac{2 \pi \epsilon c(\mu)}{2^{a(k-1)}} \right)^{1/2}\\
&\leqslant \frac{B_1}{2^{(a-1)k}} + \frac{B_2 k^c}{2^{(a-1)k/2}}.
\end{align*}
for some constants $B_1, B_2 > 0$. Notice that if $T$ and consequently $k$ is large enough then the right hand side is less than $\log 2$. But if an excursion satisfies $E(\gamma, H) > T^a$ then $\gamma$ must spend time at least $\log 2$ in $\mathcal{N}_{\epsilon/2^{n-1}} \setminus \mathcal{N}_{\epsilon/2^n}$, which proves the lemma. 
\end{proof}

\begin{proof}[Proof of Theorem \ref{saddles-V} when $V$ is all saddle connection holonomies]
Fix $c$ satisfying $1/2< c< 1$. Observe that Proposition \ref{two-large} which asserts that along typical geodesics there is at most a single excursion larger than $T(\log T)^c$ till time $T$ for $T$ large enough holds for $\mu$-typical \teichmuller geodesics. In fact, as noted in Remark \ref{tail} we do not need the precise asymptotic for $\mu (\mathcal{N}_{\epsilon/R})$ as $R \to \infty$. Moreover, by the previous lemma the largest excursion is smaller than $T^a$. 

If $2^k \leqslant T < 2^{k+1}$, then let $n =n(T) = \lfloor k + c \log_2 k \rfloor$. Replicating the exact argument in the proof of Theorem \ref{Esum}, we use Lemma 5.1 to conclude that for $\mu$-almost every $q \in \mathcal{N}$
\[
\lim_{T \to \infty} \frac{1}{T \log T}  \int_0^T \Psi_{2^n}(v(\gamma_t)) \, dt = 2\epsilon c(\mu)
\]
where $\gamma$ is the \teichmuller geodesic ray with $v(\gamma_0) = q$. Lemma \ref{phihat-norms} implies that the above limit holds when $\Psi_{2^n}$ is replaced by $\widehat{\Psi}_{2^n}$. Finally, up to an additive error whose dependence on $T$ will be described below
\[
E(\gamma, T) - \max_{k \leqslant N} E(\gamma, H_k)  \asymp \int_0^T \widehat{\Psi}_{2^n} (v(\gamma_t)) \, dt.
\]
By \ref{partial-exc} (and as pointed out in the proof of Theorem \ref{Esum}), the additive error from the partial excursion (if it exists) is bounded by $2 T (\log T)^c$. The additive error from complete excursions is at most linear in the number $N$ of horoballs that $\gamma$ intersects till time $T$ which we claim grows linearly in $T$. Let $\mathcal{S}_T$ be the saddle connections for $q$ whose length squared gets shorter than $\epsilon$ in time less than $T$ along the \teichmuller geodesic ray determined by $q$. Then the necessary conditions on the $(x,y)$-coordinates of the holonomy vectors of the saddle connections in $\mathcal{S}_T$ are $\vert x  y \vert \leqslant \epsilon^2/2, y \leqslant e^{T}$ and $x <  \epsilon$. By \cite[Theorem 1.6]{Ath-Par-Tse} the number of such vectors for a $\mu$-almost every $q$ is linear in $T$ thus proving the claim.
\end{proof}

\subsection{The general case.}
Let $V$ be a $SL(2,\R)$-invariant locus. Suppose for $q$ there are $j$ short saddle connections no two of which are $\mathcal{N}$-parallel with holonomy in $V(q)$. Index the saddle connections  $\beta_1, \beta_2, \dots, \beta_j$ in the order of increasing $q$-lengths and let $\ell^2_q(\beta_1) = \epsilon/R_1, \ell^2_q(\beta_2) = \epsilon/R_2, \dots, \ell^2_q(\beta_j) = \epsilon/R_j$. We define the functions
\[
\Psi^{V} (q) = \frac{2}{\pi} R_1
\]
and 
\[
\widehat{\Psi}^{V} (q) = \frac{2}{\pi}(R_1+ R_2 + \dots + R_j).
\]
We can define truncations of these functions in an analogous way using the characteristic function of $\mathcal{N}(V)_{\epsilon/R}$. However, the shortest saddle connection $\beta$ for $q$ may have holonomy not in $V(q)$ and be shorter than $\beta_1$ as above. Also, there is no lower bound on how short $\beta$ can be. Hence, some care is required in defining the truncations. The crucial point is that in light of Lemma \ref{lemma:a}, we can impose a lower bound on the length of  $\beta$ in defining the truncations.  

Let $a >1$ be such that $a < 2/(1+ \delta)$. In particular, $a\delta < 1$. Let $\mathcal{N}(R, a) \subset \mathcal{N}(V)_{\epsilon} \setminus \mathcal{N}(V)_{\epsilon/R}$ be the subset of those $q$ such that the shortest saddle connection $\beta$ satisfies $\ell^2_q(\beta) \geqslant \epsilon/R^a$. Let $\chi_{R,a}$ denote its characteristic function of $\mathcal{N}(R,a)$. 

Let $\mathcal{N'}(V) \subset \mathcal{N}(V)_{\epsilon/R} \setminus \mathcal{N}(V)_{\epsilon/R^a}$ be the subset of quadratic differentials such that $\ell^2_q(\beta) \geqslant \epsilon/R^a$ and apart from $\beta_1$ all other short saddle connections  with holonomy in $V(q)$ satisfy $\ell^2_q(\beta_i) \geqslant \epsilon/R$. Let $(\Psi^V)': \mathcal{N}'(V) \to \R$ be defined as 
\[
(\Psi^{V})'(q) = \frac{2}{\pi} (R_2 + \dots + R_j).
\]
We define $\Psi^{V}_R = \Psi^{V} \chi_{R,a} $ and $\widehat{\Psi}^{V}_R = \widehat{\Psi}^{V}\chi_{R,a}+ (\Psi^{V})'$. Again, the extra term $(\Psi^V)'$ analogous to $\Psi'$ earlier, is to allow us to track excursions for saddle connections in $V$ that are concurrent with the largest excursion with holonomy in $V$ if it exceeds $T(\log T)^c$.

For $2^k \leqslant R$ we have the estimate
\[
\mu \left( \mathcal{N}(V)_{\epsilon/2^{k-1}} \setminus \mathcal{N}(V)_{\epsilon/2^k} \right) - \mu \left( \mathcal{N}(R, a) \cap \mathcal{N}(V)_{\epsilon/2^{k-1}} \setminus \mathcal{N}(V)_{\epsilon/2^k} \right) \leqslant \frac{m_1 \epsilon^2}{2^{k-1} R^a}.
\]
This means here each term in the summations for lower and upper bound for $L^1$-norm in Lemma \ref{phi-norms} changes by at most $2^k m_1 \epsilon^2/2^{k-1} R^a = 2m_1 \epsilon^2/ R^a$. Hence the summations change by at most $2n m_1 \epsilon^2/R^a <  m_6 \log R/ R$ for some constant $m_6 > 0$. This implies 
\[
\lim_{R \to \infty} \frac{\Vert \Psi^{V}_R \Vert_{L^1}}{\log R} = 2 \epsilon c(V,\mu).
\]
Similarly each term in the summations for lower and upper bound for $L^2$ norms changes by at most $2^{2k}m_1 \epsilon^2/2^{k-1} R^a = m_1 \epsilon^2 2^{k+1}/R^a$ and hence the summations change by at most $4 m_1 \epsilon^2 2^n/R^a  <  m_7/R^{a-1}$ for some constant $m_7 > 0$. This implies 
\[
\lim_{R \to \infty} \frac{\Vert \Psi^{V}_R \Vert_{L^2}}{\sqrt{R}} = 2 \frac{\sqrt{\epsilon c(V, \mu)}}{\sqrt{\pi}}.
\]

\begin{lemma}\label{c-phihat}
There exists a constant $B_{V}$ such that for $R$ large enough
\[
\Vert \widehat{\Psi}^{V}_R \Vert_{L^1} \leqslant \Vert \Psi^{V}_R \Vert_{L^1} + B_{V}.
\]
The $L^2$-norms satisfy
\[
\Vert \widehat{\Psi}^{V}_R \Vert_{L^2} \leqslant \Vert \Psi^{V}_R \Vert_{L^2} + o(\sqrt{R}).
\]
\end{lemma}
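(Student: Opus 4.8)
The plan is to replay the proof of Lemma~\ref{phihat-norms}, inserting one extra dyadic parameter to handle the new feature that, for a proper sublocus $V$, the globally shortest saddle connection on $q$ need not have holonomy in $V(q)$. Recall that $\Psi^{V}_R$ and $\widehat{\Psi}^{V}_R$ are supported in $\mathcal{N}(R,a)\cup\mathcal{N}'(V)$, and that on either set the shortest saddle connection $\beta$ of $q$ satisfies $\ell^2_q(\beta)\geqslant\epsilon/R^a$; writing $n_a$ for the least integer with $2^{n_a}\geqslant R^a$, the quantity $R_0:=\epsilon/\ell^2_q(\beta)$ lies in $[1,2^{n_a})$. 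For such a $q$ let $\beta_1$ be the shortest saddle connection with holonomy in $V(q)$ (one representative per $\mathcal{N}$-parallel class) and $\beta_2$ the shortest one not $\mathcal{N}$-parallel to $\beta_1$, and set $R_i=\epsilon/\ell^2_q(\beta_i)$, so that $R_2\leqslant R_1\leqslant R_0$. If no such $\beta_2$ exists then $\widehat{\Psi}^{V}$ and $\Psi^{V}$ already agree on that locus and there is nothing to estimate, so assume $\beta_2$ exists. The point is that we must now track $R_0$ in addition to $R_1$ and $R_2$.

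First I would dispose of the case $\beta_1=\beta$, i.e.\ $R_1=R_0$: here the estimates of Lemma~\ref{phihat-norms} go through verbatim, contributing at most a constant to $\Vert\widehat{\Psi}^{V}_R\Vert_{L^1}-\Vert\Psi^{V}_R\Vert_{L^1}$ and an $O(R^{a(1+\delta)/2}+R^{a\delta})$ term to $\Vert\widehat{\Psi}^{V}_R\Vert^2_{L^2}-\Vert\Psi^{V}_R\Vert^2_{L^2}$. In the remaining case $\beta\neq\beta_1$ I claim $\beta$ is not $\mathcal{N}$-parallel to at least one of $\beta_1,\beta_2$: if it were $\mathcal{N}$-parallel to both, then $\beta_1$ and $\beta_2$ would be $\mathcal{N}$-parallel, since $\mathcal{N}$-parallelism is an equivalence relation (\cite{Wri}), contradicting the choice of $\beta_2$. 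So regularity \ref{regularity} applies to one of the non-$\mathcal{N}$-parallel pairs $(\beta,\beta_1)$, $(\beta,\beta_2)$. I would then decompose the support of $\widehat{\Psi}^{V}_R-\Psi^{V}_R$ into shells on which $R_0\in[2^{k_0-1},2^{k_0})$, $R_1\in[2^{k-1},2^k)$, $R_2\in[2^{j-1},2^j)$ with $j\leqslant k\leqslant k_0\leqslant n_a$. Regularity bounds the measure of such a shell by $\lesssim\epsilon^2\,2^{-k_0}2^{-j}$, while the Eskin--Masur bound \ref{number-saddles}, applied with $\beta$ of length $\geqslant\sqrt{\epsilon}\,2^{-k_0/2}$, bounds the number of saddle connections in $V(q)$ of $R$-value comparable to $2^l$ by $\lesssim(2^{k_0}/2^{l-1})^{(1+\delta)/2}$; summing their $R$-values over $l=1,\dots,j$ gives the pointwise bound $\widehat{\Psi}^{V}_R(q)-\Psi^{V}_R(q)\lesssim 2^{k_0(1+\delta)/2}2^{j(1-\delta)/2}$ on the shell.

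Multiplying measure by pointwise bound and summing the convergent geometric series in $j$ (here $(1+\delta)/2>0$), then in $k$, leaves a bound $\lesssim\epsilon^2\,k_0\,2^{-k_0(1-\delta)/2}$; summing over $k_0\leqslant n_a$ converges because $\delta<1$ and is bounded independently of $R$, which furnishes the constant $B_V$. For the $L^2$ statement I would expand $(\widehat{\Psi}^{V}_R)^2=(\Psi^{V}_R)^2+2\Psi^{V}_R(\widehat{\Psi}^{V}_R-\Psi^{V}_R)+(\widehat{\Psi}^{V}_R-\Psi^{V}_R)^2$ exactly as in Lemma~\ref{phihat-norms}, use $\Psi^{V}_R\leqslant(2/\pi)2^k$ on each shell, and run the same triple sum; the cross and square terms contribute powers $R^{a(1+\delta)/2}$, $R^{a\delta}$ and a $\log R$ factor, all $o(R)$ because $a$ was chosen with $a(1+\delta)<2$ (hence $a\delta<1$). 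Dividing by $\Vert\Psi^{V}_R\Vert^2_{L^2}\asymp R$ then yields $\Vert\widehat{\Psi}^{V}_R\Vert_{L^2}\leqslant\Vert\Psi^{V}_R\Vert_{L^2}+o(\sqrt R)$. The corrector term $(\Psi^{V})'$, supported where $R_1>R$ while $R_2\leqslant R$, is absorbed into the same shells with the $j$-range cut at $j\leqslant\log_2 R$, so it adds no more than the estimates above.

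The main obstacle --- and the reason the truncations carry the cutoff $\chi_{R,a}$ and the corrector $(\Psi^{V})'$ --- is precisely the gap between the shortest saddle connection of $q$ and the shortest one lying in $V(q)$: without a lower bound on $\ell_q(\beta)$ the Eskin--Masur growth factor $2^{k_0(1+\delta)/2}$ is uncontrolled, and Lemma~\ref{lemma:a} is exactly what legitimizes truncating at $\epsilon/R^a$ without losing anything asymptotically. The only genuinely new step relative to Lemma~\ref{phihat-norms} is locating, in the regime $\beta\neq\beta_1$, a non-$\mathcal{N}$-parallel pair to feed into regularity \ref{regularity}, so that the decaying factor $2^{-k_0}$ it produces dominates the saddle-connection count; everything else is the bookkeeping of Lemma~\ref{phihat-norms} carried out one index deeper.
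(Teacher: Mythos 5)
Your proposal is correct and follows essentially the same route as the paper: split according to whether the globally shortest saddle connection of $q$ has holonomy in $V(q)$, reuse the estimates of Lemma \ref{phihat-norms} in the first case, and in the second introduce an extra dyadic index for $\ell_q(\beta)$, bounding shell measures by regularity \ref{regularity} and the pointwise excess by the Eskin--Masur bound \ref{number-saddles}, with the corrector $(\Psi^{V})'$ absorbed into the same shells. The only (harmless) deviation is that you apply regularity to whichever of the pairs $(\beta,\beta_1)$, $(\beta,\beta_2)$ is non-$\mathcal{N}$-parallel, yielding the slightly weaker shell bound $\epsilon^2 2^{-k_0}2^{-j}$ instead of the paper's $\epsilon^2 2^{-p}2^{-k}$ from $(\beta,\beta_1)$, which still sums to the same $O(1)$ and $o(R)$ conclusions.
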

\begin{proof}
Consider $\mathcal{N}(R, a) \cap \mathcal{N}(V)_{\epsilon/2^{k-1}} \setminus \mathcal{N}(V)_{\epsilon/2^k}$ and let $\mathcal{N}^{V}_{k,j}$ be its subset consisting of those $q$ for which (among the collection of non-parallel saddle connections with holonomies in $V(q)$) the second shortest saddle connection $\beta_2$ satisfies $ \epsilon/2^{j-1} \geqslant \ell^2_q(\beta_2) > \epsilon/2^j$ where $j \leqslant k$. 

We further partition $\mathcal{N}^{V}_{k,j}$ into two sets $\mathcal{N}^{V}_{k,j}(1) \cup \mathcal{N}^{V}_{k,j}(2)$ depending on whether the shortest saddle connection for $q$ has holonomy in $V(q)$ or not, i.e. $\mathcal{N}^{V}_{k,j}(1)$ is the subset of $q$ for which $\beta_1$ is the shortest saddle connection and $\mathcal{N}^{V}_{k,j}(2)$ is when its not. On $\mathcal{N}^{V}_{k,j}(1)$ the integral
\[
\int_{\mathcal{N}^{V}_{k,j}(1)}  \left( \widehat{\Psi}^{V}_R - \Psi^{V}_R \right) \, d\mu
\]
is bounded from above identical to Lemma 5.5.

Let $n_a$ be the smallest integer such that $2^{n_a} \geqslant R^a$. For $q \in \mathcal{N}^{V}_{k,j}(2)$ suppose that the shortest saddle connection $\beta$ satisfies $\epsilon/2^{p-1} > \ell^2_q(\beta) \geqslant \epsilon/2^p$ where $2^k \leqslant 2^p \leqslant 2^{n_a} $. The measure of the subset of such $q$ is bounded above by
\[
\frac{m_1 \epsilon^2}{2^{p-1} 2^{k-1}}.
\]
The number of short saddle connections whose $q$-length squared is at least $\epsilon/2^{i-1}$ is bounded above by
\[
m_2 \left( \frac{2^p}{2^{i-1}}\right)^{(1+ \delta)/2}.
\]
This gives the pointwise bound
\[
\widehat{\Psi}^{V}_R (q) - \Psi^{V}_R(q) < \sum_{i=1}^j 2^i m_2 \left(\frac{2^p}{2^{i-1}} \right)^{(1+ \delta)/2} = 2m_2 2^{p(1+\delta)/2} \sum_{i=1}^j 2^{(i-1)(1- \delta)/2} <  m_9 2^{p(1+ \delta)/2} 2^{j(1-\delta)/2}
\]
for some constant $m_9 > 0$. This gives the bound
\[
\int_{\mathcal{N}^{V}_{k,j}(2)} \left( \widehat{\Psi}^{V}_R - \Psi^{V}_R \right) \, d\mu < \sum_{p=k}^{n_a} m_9 2^{p(1+ \delta)/2} 2^{j(1-\delta)/2}\left( \frac{m_1 \epsilon^2}{2^{p-1}2^{k-1}} \right) < \frac{m_{10}}{2^k}
\]
for some constant $m_{10} > 0$. Thus adding up the upper bounds for the integrals on $\mathcal{N}^{V}_{k,j}(1)$ and $\mathcal{N}^{V}_{k,j}(2)$ we get 
\[
\int_{\mathcal{N}^{V}_{k,j}} \left( \widehat{\Psi}^{V}_R - \Psi^{V}_R \right) \, d\mu  < \frac{4m_1 m_3\epsilon^2}{2^{k(1-\delta)/2}2^{j(1+ \delta)/2}} + \frac{m_{10}}{2^k}
\]
Summing over $j =1$ to $k$ we get
\[
\int_{\mathcal{N}(R, a) \cap \mathcal{N}(V)_{\epsilon/2^{k-1}} \setminus \mathcal{N}(V)_{\epsilon/2^k}} \left( \widehat{\Psi}^{V}_R - \Psi^{V}_R \right) \, d\mu  < \frac{m_{11}}{2^{k(1-\delta)/2}} + \frac{m_{10}k}{2^k}
\]
for some constant $m_{11} > 0$. Summing over $k=1$ to $n_a$ observe that the sum of the right hand side is bounded independent of $n$ which proves the lemma for $L^1$-norms. 

The pointwise bound also implies
\begin{align*}
\int_{\mathcal{N}^{V}_{k,j}(2)} \left( \widehat{\Psi}^{V}_R \right)^2 - \left(\Psi^{V}_R \right)^2 \, d\mu &< \sum_{p=k}^{n_a} 2m_9 2^{p(1+ \delta)/2} 2^{j(1-\delta)/2} \left(\frac{2^k m_1 \epsilon^2}{2^{p-1} 2^{k-1}} \right)\\ &\hskip 12pt + \sum_{p=k}^{n_a} m^2_9 2^{p(1+ \delta)} 2^{j(1-\delta)}\left( \frac{m_1 \epsilon^2}{2^{p-1}2^{k-1}} \right)\\
&< \frac{m_{12}}{2^k} + \frac{m_{13} 2^{n_a \delta}}{2^{k\delta}}
\end{align*} 
for some constants $m_{12}, m_{13} > 0$. The corresponding upper bound for $\mathcal{N}^{V}_{k,j}(1)$ is identical to Lemma \ref{phihat-norms} and is of the form
\[
\int_{\mathcal{N}^{V}_{k,j}(1)} \left( \widehat{\Psi}^{V}_R \right)^2 - \left(\Psi^{V}_R \right)^2 \, d\mu  < \frac{m_{14} 2^{k(1+ \delta)/2}}{2^{j(1+\delta)/2}} + \frac{m_{15} 2^{k\delta}}{2^{j\delta}}
\]
for some constants $m_{14}, m_{15} > 0$. Adding up the bounds for $\mathcal{N}^{V}_{k,j}(1)$ and $\mathcal{N}^{V}_{k,j}(2)$ and summing over $j=1$ to $k$ we get
\[
\int_{\mathcal{N}(R, a) \cap \mathcal{N}(V)_{\epsilon/2^{k-1}} \setminus \mathcal{N}(V)_{\epsilon/2^k}}  \left( \widehat{\Psi}^{V}_R \right)^2 - \left(\Psi^{V}_R \right)^2 \, d\mu  < \frac{m_{12} k}{2^k} + \frac{m_{13} k 2^{n_a \delta}}{2^{k\delta}} + m_{14} 2^{k(1+ \delta)/2} + m_{15} 2^{k \delta}.
\]
and when $2^{k-1} > R$,
\[
\int_{\mathcal{N}'(V) \cap \mathcal{N}(V)_{\epsilon/2^{k-1}} \setminus \mathcal{N}(V)_{\epsilon/2^k}} \left( \widehat{\Psi}^{V}_R \right)^2 - \left(\Psi^{V}_R \right)^2 \, d\mu   < \frac{m_{12} k}{2^k} + \frac{m_{13} k 2^{n_a \delta}}{2^{k\delta}} + m_{14} 2^{k(1+ \delta)/2} + m_{15} 2^{k \delta}
\]
Summing over $k=1$ to $n_a$ we get that 
\[
\Vert \widehat{\Psi}^{V}_R \Vert^2_{L^2} - \Vert \Psi^{V}_R \Vert^2_{L^2} < \frac{m_{16}}{2} 2^{n_a \delta} + \frac{m_{17}}{2} 2^{n_a(1+ \delta)/2} + m_{18} < m_{16} R^{a \delta} + m_{17} R^{a(1+ \delta)/2} + m_{18}
\]
for some constants $m_{15}, m_{16}, m_{17}, m_{18} > 0$. The condition on $a$ implies that the right hand side is $o(R)$. Thus the lemma follows for $L^2$-norms.
\end{proof} 

\begin{proof}[Proof of Theorem \ref{saddles-V}]
Fix $c$ satisfying $1/2< c< 1$. For the same reason as in the proof of Theorem \ref{saddles-V} when $V$ is all saddle connection holonomies, Proposition \ref{two-large} holds for $\mathcal{N}(V)$ asserting that for $\mu$-almost every $q$, the \teichmuller geodesic ray corresponding to $q$ has at most a single excursion till time $T$, larger than $T(\log T)^c$ for all $T$ large enough depending on $q$. Moreover, for any $a> 1$, by Lemma \ref{lemma:a} the largest excursion cannot exceed $T^a$. 

The later fact implies that up to additive error our truncation $\widehat{\Psi}^{V}_{2^n}$ satisfies 
\[
\int_0^T \widehat{\Psi}^{V}_{2^n}(v(\gamma_t)) \, dt \asymp E(\gamma, T) - \max_{k \leqslant N_{V}} E(\gamma, H_k)
\]
where for the same reason as earlier the additive error is at most linear in $T$. Theorem \ref{ergodic-rate} and Lemma \ref{c-phihat} conclude the proof of Theorem \ref{saddles-V} in the general case, the precise argument a replica of earlier proofs.

\end{proof}



\begin{thebibliography}{99}

\bibitem{Ath-Par-Tse} Athreya, J., Parrish, A. and Tseng, J. {\em Ergodic theory and diophantine approximation for linear forms and translation surfaces.} preprint, (2014), http://arxiv.org/abs/1401.4148. 

\bibitem{Avi-Gou} Avila, A. and Gou\"{e}zel, S. {\em Small eigenvalues of the Laplacian for algebraic measures in moduli space, and mixing properties of the \teichmuller flow.} Ann. of Math. (2) 178 (2013), no. 2, 385-442.

\bibitem{Avi-Gou-Yoc} Avila, A., Gou\"{e}zel, S. and Yoccoz, J-C. {\em Exponential mixing for the \teichmuller flow.} Publ . Math. Inst. Hautes Etudes Sci. no. 104 (2006), 143-211.

\bibitem{Avi-Mat-Yoc} Avila, A., Matheus, C. and Yoccoz, J-P. {\em $SL(2,\R)$-invariant probability measures on the moduli spaces of translation surfaces are regular.} Geom. Funct. Anal. 23, (2013), no. 6, 1705-1729.

\bibitem{Avi-Res} Avila, A. and Resende, M-J. {\em Exponential mixing for the \teichmuller flow in the space of quadratic differentials.} Comment. Math. Helv. 87 (2012), no. 3, 589-638. 

\bibitem{Avi-Via} Avila, A. and Viana, M. {\em Simplicity of Lyapunov spectra: proof of the Zorich-Kontsevich conjecture.} Acta Math. 198 (2007), no. 1, 1-56.

\bibitem{Boi-Lan} Boissy, C. and Lanneau, E. {\em Dynamics and geometry of Rauzy-Veech induction for quadratic differentials.} Ergodic Theory and Dynam. Systems, 29 (2009). no. 3, 767-816.

\bibitem{Bri-Stu} Brin, M. and Stuck, G. {\em Introduction to Dynamical Systems.} Cambridge University Press (2002), xii+240pp. ISBN: 0-521-80841-3.

\bibitem{Dia-Vaa} Diamond, H and Vaaler, J. {\em Estimates for partial sums of continued fraction partial quotients.} Pacific. J. Math. 122 (1986), no. 1, 73-82.

\bibitem{Esk-Mas} Eskin, A. and Masur, H. {\em Asymptotic formulas on flat surfaces.} Ergodic Theory Dynam. Systems 21 (2001), no. 2, 443-478.

\bibitem{Esk-Mas-Zor} Eskin, A., Masur, H. and Zorich, A. {\em Moduli spaces of Abelian differentials: The principle boundary, counting problems and the Siegel-Veech constants.} Publ. Math. Inst. Hautes. \'Etudes. Sci. No 97, (2003), 61-179.

\bibitem{Esk-Mir} Eskin, A. and Mirzakhani, M. {\em Invariant and stationary measures for the $SL(2,\R)$ action on moduli space.} preprint.

\bibitem{Esk-Mir-Moh} Eskin, A, Mirzakhani, M. and Mohammadi, A. {\em Isolation, equidistribution and orbit closures for the $SL(2,\R)$ action on moduli space.} preprint.

\bibitem{Fil} Filip, S. {\em Splitting mixed Hodge structures over affine invariant manifolds.} preprint.

\bibitem{Gad-Mah-Tio} Gadre, V., Maher, J. and Tiozzo, G. {\em Word length statistics for \teichmuller geodesics and singularity of harmonic measure.} preprint, (2013), http://arxiv.org/abs/1212.1481.

\bibitem{Hub-Mas} Hubbard, J. and Masur, H. {\em  Quadratic differentials and foliations.} Acta Math. 142 (1979), no. 3-4, 221-274.

\bibitem{Khi} Khintchine, A. {\em Continued fractions.} U. Chicago Press, (1964), Chicago IL.

\bibitem{Kon-Zor} Kontsevich, M. and Zorich, A. {\em Connected components of the moduli space of abelian differentials with prescribed singularities.} Invent. Math. 153 (2003), no. 3, 631-678.

\bibitem{Lan} Lannaeu, E. {\em Connected components of the strata of the moduli spaces of quadratic differentials.} Ann. Sci. \'Ec. Norm. Sup\'er. (4) 41, (2008), no. 1, 1-56. 

\bibitem{Mas1} Masur, H. {\em Interval exchange transformations and measured foliations.} Ann. of Math. (2) 115 (1982), no. 1, 169-200.

\bibitem{Mas} Masur, H. {\em Logarithm law for geodesics in moduli space.} (G\"{o}ttingen, 1991/ Seattle, WA, 1991) Contemp. Math. vol. 150, Amer. Math. Soc., Providence, RI (1993), 229-245.

\bibitem{Mas-Smi} Masur, H. and Smillie, J. {\em Hausdorff dimension of sets of nonergodic foliations.} Ann. Math. 134, (1991), 455-543.

\bibitem{Mas-Zor} Masur, H. and Zorich, A. {\em Multiple saddle connections on flat surfaces and the principle boundary of the moduli space of quadratic differentials.} Geom. Funct. Anal. 18, (2008), no. 3, 919-987.

\bibitem{Mat} Matheus, C. {\em Some quantitative versions of Ratner's mixing theorems.} Bull. Braz. Math. Soc. (N.S) 44 (2013), no. 3, 469-488.

\bibitem{Moo} Moore, C. {\em Exponential decay of correlation coefficients for geodesic flows.} Group Representations, ergodic theory, operator algebras and mathematical physics (Berkeley, Calif., 1984), 163-181, Math. Sci. Res. Inst. Publ., no. 6, Springer, New York, (1987).

\bibitem{Rat} Ratner, M. {\em The rate of mixing for geodesic and horocycle flows.} Ergodic Theory Dynam. Systems 7, 267-288, (1987).  

\bibitem{Sul} Sullivan, D. {\em Disjoint spheres, approximation by imaginary quadratic numbers and the logarithm law for geodesics.} Acta Math. 3-4 (1982), 215-237.

\bibitem{Vee} Veech, W. {\em The \teichmuller geodesic flow.} Ann. of Math. (2) 124 (1986), no. 3, 441-530.

\bibitem{Wri} Wright, A. {\em Cylinder deformations in orbit closures of translation surfaces.} to appear in Geom. Topol.

\end{thebibliography}
\end{document}